\newtheorem{theorem}{Theorem}[section]
\newtheorem{lemma}[theorem]{Lemma}
\newtheorem{proposition}[theorem]{Proposition}
\newtheorem{corollary}[theorem]{Corollary}
\theoremstyle{definition}
\newtheorem{definition}[theorem]{Definition}
\newtheorem{example}[theorem]{Example}
 \title[$S$-$\mathcal{J}$-Ideals]{$S$-$\mathcal{J}$-Ideals: A Study in Commutative and Noncommutative Rings } 
{

\author[1]{Alaa Abouhalaka $^{1}$, Hat\.{I}ce \c{C}ay $^{2,3,\ast}$ and Bayram Al\.{I} Ersoy $^{3}$}

\address[1]{$^{1}$ \quad Department of Mathematics,
\c{C}ukurova University, 01330 Balcal\i,
 Adana, Turkey; alaa1aclids@gmail.com }
\address[2]{ $^{2}$ \quad IMU Vocational School,
Istanbul Medipol University, 34810 Beykoz,
 Istanbul, Turkey.}
\address[3]{$^{3}$\quad  Department of Mathematics,
Yildiz Technical University, 34220 Esenler,
 Istanbul, Turkey; ersoya@gmail.com}

\address[*]{Corresponding author: \texttt{hcay@medipol.edu.tr}}

\begin{document}

\begin{abstract} 
In this paper, we introduce the concept of $S$-$\mathcal{J}$-ideals in both commutative and noncommutative rings. 
For a commutative ring $R$ and a multiplicatively closed subset $S$, we show that many properties of $\mathcal{J}$-ideals apply to $S$-$\mathcal{J}$-ideals and examine their characteristics in various ring constructions, such as homomorphic image rings, quotient rings, cartesian product rings, polynomial rings, power series rings, idealization rings, and amalgamation rings. In noncommutative rings, where $S$ is an $m$-system, we define right $S$-$\mathcal{J}$-ideals. We demonstrate the equivalence of $S$-$\mathcal{J}$-ideals and right $S$-$\mathcal{J}$-ideals in commutative rings with identity and provide examples to illustrate the connections between right $S$-prime ideals and $\mathcal{J}$-ideals.

\end{abstract}

\subjclass{16N20; 16N40; 16N60; 16N80; 16W99.}
\keywords{$m$-system; multiplicatively closed subset; $\mathcal{J}$-ideal; commutative ring; noncommutative ring; $S$-$\mathcal{J}$-ideals}

\maketitle

\section{Introduction}

"A preprint has previously been published \cite{prep}".

Let $R$ be a commutative ring, and $S$ be a multiplicatively closed subset of $R$. The notion of $S$-prime ideals is first introduced in 2020 in \cite{HMS}. An ideal $I$, disjoint from $S$, of commutative ring $R$ is deemed $S$-prime if for all $a, b\in R$ with $ab\in I$, it follows that $as\in I$ or $bs\in I$. This idea has been expanded upon in many studies, including \cite{vis}, \cite{see}-\cite{mahdo}. The concept of $\mathcal{J}$-ideals first proposed in \cite{Hani1} serves as a broader generalization  of $n$-ideals   introduced in \cite{ref2}. An ideal $I$ of $R$ is termed  a $\mathcal{J}$-ideal (an $n$-ideal) if for all $a, b\in R$ with $ab\in I$, and $a\not\in \mathcal{J}(R)$ ($a\not\in\beta (R)$), then $b\in I$, where $\mathcal{J}(R)$ and $\beta (R)$ are the Jacobson and prime radicals of $R$, respectively. Recently the concept of $S$-$n$-ideals has been defined in \cite{ech2}, where an ideal $I$ disjoint from $S$ is called an $S$-$n$-ideal  if for all $a, b\in R$ with $ab\in I$, and  $as\not\in\beta (R)$, then $bs\in I$. 

In noncommutative rings, the concept of the right $S$-prime ideal is introduced in \cite{ref4}, where $S$ is an $m$-system. Definition 2.1 of \cite{ref4} states that an ideal $P$ disjoint from $S$ of a noncommutative ring $R$ is called right $S$-prime, if  and only if whenever $IK\subseteq P$, then either $I\langle s\rangle \subseteq P$ or $K\langle s\rangle\subseteq P$ for all ideals $I,K$ of $R$, and for some (fixed) $s\in S$. Just as multiplicatively closed subsets are closely linked to prime ideals in commutative rings, 
$m$-systems play a similar role in noncommutative rings. Thus, employing 
$S$ as an $m$-system is essential in noncommutative rings to obtain satisfactory results.  Additionally, the  concept of $\mathcal{J}$-ideals in noncommutative case is introduced in \cite{ref1}. Definition 5.1 of \cite{ref1} states that  a proper ideal $I$ of  $R$ is a $\mathcal{J}$-ideal if whenever $a_1, a_2 \in R$
with $a_1Ra_2\subseteq  I$ and $a_1 \not\in \mathcal{J} (R)$, then $a_2 \in I$. Recall that the concept of $\mathcal{J}$-ideals is also generalized in \cite{AA2} based on the earlier work presented in \cite{AA1}.

In this paper, we introduce the notion of $S$-$\mathcal{J}$-ideals in both commutative and noncommutative rings. The second section explores the characteristics of $S$-$\mathcal{J}$-ideals in commutative rings, demonstrating how many properties of $\mathcal{J}$-ideals naturally extend within this framework. We also examine $S$-$\mathcal{J}$-ideals in various related rings, such as homomorphic image rings, quotient rings, cartesian  product rings, polynomial rings, power series rings, idealization rings, and amalgamation rings.

The third section is dedicated to right $S$-$\mathcal{J}$-ideals in noncommutative rings, where $S$ is an $m$-system. We illustrate the equivalence of the two new concepts when $R$ is commutative with a unit, provide many examples, and clarify the relationships between right $S$-prime and $\mathcal{J}$-ideals.

\section{$S$-$\mathcal{J}$-ideals in commutative rings}

In this section, by $R$ we mean a commutative ring with identity, and $S$ will denote  a multiplicatively closed subset of $R$.

\subsection{Characteristics of $S$-$\mathcal{J}$-ideals }

\begin{definition}\label{use}  An ideal $I$ of $R$ disjoint from $S$ is called an $S$-$\mathcal{J}$-ideal, if there exists an (a fixed) $s \in S$ such that for all $a,b \in R$,  if $ab \in I$, then either $sa \in \mathcal{J}(R)$, or $sb \in I$. 

 Due to  the symmetry between $a$ and $b$, we can see  that if $I$ is an $S$-$\mathcal{J}$-ideal with $as, bs\not\in I$, then $as, bs\in\mathcal{J}(R)$.  
\end{definition}

From the definitions, it is evident that all the  $\mathcal{J}$-ideals qualify as $S$-$\mathcal{J}$-ideals.  However, the converse does not hold, as demonstrated by the upcoming example.

\begin{example}\label{Ex.}
Let $R=\mathbb{Z}_{36}$. The Jacobson radical of $R$ is $\mathcal{J} (R)=\langle \overline{6}\rangle$. The ideal $I=\langle \overline{4}\rangle$ is not a $\mathcal{J}$-ideal, since $\overline{2}.\overline{2}\in I$ but neither $\overline{2}\in \mathcal{J} (R)$ nor $\overline{2}\in I$. Take $S=\{\overline{1}, \overline{3}, \overline{9}, \overline{27}\}$. It is obvious that $S$ is a multiplicatively closed subset of $R$ and $S\cap I=\emptyset$. Now choose $s=\overline{3}\in S$ and assume $ab\in I$ for $a,b \in R$.   

 \noindent $\bullet$    If $3b\in I$, then $I$ is $S$-$\mathcal{J}$. 

\noindent $\bullet$  If $3b\notin I$, then,  $ab\in\langle \overline{4}\rangle\subseteq \langle \overline{2}\rangle$ implies $a\in \langle \overline{2}\rangle$ or $b\in \langle \overline{2}\rangle$. Assume $a\notin \langle \overline{2}\rangle$ and $b\in \langle \overline{2}\rangle$. Then,  $b=\overline{4k_1-2}$, for $k_1\in\mathbb{Z}^+$, and since $a\notin \langle \overline{2}\rangle$, then $a=\overline{2k_2-1}$, for $k_2\in\mathbb{Z}^+$. Notice that $ab= (\overline{2k_2-1})(\overline{4k_1-2})\notin I$, a contradiction. Hence $a$ must be even, and thus $3a\in \mathcal{J} (R)$. Therefore, $I$ is  $S$-$\mathcal{J}$. 
\end{example}

\begin{proposition}\label{1}  If $I$ is an $S$-$\mathcal{J}$-ideal of $R$, associated with an element $s\in S$, then $I \subseteq (\mathcal{J}(R):s)$. 
\end{proposition}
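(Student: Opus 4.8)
The plan is to unwind both the definition of an $S$-$\mathcal{J}$-ideal and the colon ideal, and then exploit the presence of the identity together with the disjointness $S\cap I=\emptyset$. Recall that by definition $(\mathcal{J}(R):s)=\{r\in R:\,rs\in\mathcal{J}(R)\}$, so proving $I\subseteq(\mathcal{J}(R):s)$ amounts to showing that $sx\in\mathcal{J}(R)$ for every $x\in I$. This reduces the whole statement to a single membership check, carried out one element at a time.

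First I would fix an arbitrary element $x\in I$ and write it as the product $x=x\cdot 1$, which is available precisely because $R$ has an identity. Setting $a=x$ and $b=1$, we have $ab=x\in I$, so the defining property of the $S$-$\mathcal{J}$-ideal $I$ (with its associated fixed $s\in S$) forces one of the two alternatives $sa=sx\in\mathcal{J}(R)$ or $sb=s\cdot 1=s\in I$.

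The key point — and really the only place where the hypotheses do any work — is to rule out the second alternative. Since $s\in S$ and $I$ is disjoint from $S$, we cannot have $s\in I$. Hence the second option is impossible, and we are left with $sx\in\mathcal{J}(R)$, i.e. $x\in(\mathcal{J}(R):s)$. As $x\in I$ was arbitrary, this gives $I\subseteq(\mathcal{J}(R):s)$, completing the argument.

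I do not expect any genuine obstacle here: the proof is a direct one-line application of the definition, and the care required is only in choosing the factorization $x=x\cdot 1$ in the correct order (taking $a=x$, $b=1$ rather than $a=1$, $b=x$, since the latter would yield the useless conclusion $sx\in I$ instead of $sx\in\mathcal{J}(R)$) and in invoking $S\cap I=\emptyset$ to discard the branch $s\in I$.
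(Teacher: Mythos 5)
Your proof is correct and is essentially the same as the paper's: both use the factorization $x = x\cdot 1 \in I$ to invoke the defining property with $b=1$, and both discard the branch $s\in I$ via the disjointness $S\cap I=\emptyset$. The only difference is presentational — the paper argues by contradiction (assuming $I\not\subseteq(\mathcal{J}(R):s)$) while you argue directly element by element — which is immaterial.
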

\begin{proof}
 Assume $I$ is an $S$-$\mathcal{J}$-ideal. If $I\not\subseteq (\mathcal{J}(R):s)$. Then there exists $a \in I$ such that $as\not\in \mathcal{J}(R)$. However $a·1 \in I$, so  $s\in I$, which contradicts with $S\cap I=\emptyset$.  Hence, $I$ is contained in $(\mathcal{J}(R):s)$. 
\end{proof}

\begin{corollary}\label{WH} \protect[Proposition 2.2 of \cite{Hani1}]  If $I$ is a $\mathcal{J}$-ideal of $R$, then $I \subseteq \mathcal{J}(R)$. 

\end{corollary}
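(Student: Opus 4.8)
The plan is to obtain this as an immediate special case of Proposition \ref{1} rather than reproving anything from scratch. The guiding observation is that a (proper) $\mathcal{J}$-ideal is exactly an $S$-$\mathcal{J}$-ideal for the \emph{trivial} multiplicatively closed subset $S=\{1\}$, with associated element $s=1$. So the whole corollary should reduce to specializing $s$ to $1$ in the inclusion $I\subseteq(\mathcal{J}(R):s)$.

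Concretely, I would first check that the two defining conditions match when $S=\{1\}$. Writing out Definition \ref{use} with the forced choice $s=1$, the condition becomes: for all $a,b\in R$, if $ab\in I$ then $a\in\mathcal{J}(R)$ or $b\in I$. This is verbatim the defining property of a $\mathcal{J}$-ideal from the introduction. One also has to confirm the disjointness hypothesis $I\cap S=\emptyset$ required by Proposition \ref{1}; since $S=\{1\}$, this is just $1\notin I$, i.e.\ the properness of $I$, which is part of being a $\mathcal{J}$-ideal. Hence $I$ genuinely qualifies as an $S$-$\mathcal{J}$-ideal in the sense of Definition \ref{use}.

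With that in hand, I would apply Proposition \ref{1} directly to get $I\subseteq(\mathcal{J}(R):s)=(\mathcal{J}(R):1)$, and then simplify the colon ideal: $(\mathcal{J}(R):1)=\{x\in R : x\cdot 1\in\mathcal{J}(R)\}=\mathcal{J}(R)$. Combining these two steps yields $I\subseteq\mathcal{J}(R)$, which is the assertion of Corollary \ref{WH}.

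There is essentially no serious obstacle here; the only point needing a moment's care is the properness/disjointness check, and even that is forced by the definition of a $\mathcal{J}$-ideal. If one preferred a self-contained argument not routed through Proposition \ref{1}, the same idea gives a one-line direct proof: for $a\in I$ write $a=a\cdot 1\in I$, and if $a\notin\mathcal{J}(R)$ the $\mathcal{J}$-ideal property forces $1\in I$, contradicting properness; hence every $a\in I$ lies in $\mathcal{J}(R)$. I would present the corollary via the specialization $s=1$, since that is the cleanest way to exhibit it as a consequence of the preceding proposition.
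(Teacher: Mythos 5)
Your proposal is correct and follows exactly the paper's route: the paper's proof is the one-liner ``Follows from Proposition \ref{1} by taking $S=\{1\}$,'' and your argument is precisely this specialization, with the disjointness check and the simplification $(\mathcal{J}(R):1)=\mathcal{J}(R)$ made explicit. The extra care you take (and the alternative direct one-line argument) is fine but adds nothing beyond the paper's approach.
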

\begin{proof} Follows from Proposition \ref{1} by taking $S=\{1\}$.
\end{proof}

\begin{corollary} In a ring $R$ we have the following: 
\end{corollary}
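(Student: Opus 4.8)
The plan is to read this corollary as a short list of immediate consequences of Proposition~\ref{1}, the defining property in Definition~\ref{use}, and the symmetry observation recorded there. No new machinery should be needed: every item ought to follow in one or two lines from the containment $I \subseteq (\mathcal{J}(R):s)$ (equivalently $sI \subseteq \mathcal{J}(R)$) established in Proposition~\ref{1}, together with the elementary description of the Jacobson radical as the set of $x \in R$ for which $1-rx$ is a unit for every $r \in R$.

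For any part asserting a containment of $I$ or of $sI$ inside $\mathcal{J}(R)$ or inside a colon ideal, I would invoke Proposition~\ref{1} directly: the hypothesis that $I$ is $S$-$\mathcal{J}$ with associated element $s$ yields $as \in \mathcal{J}(R)$ for each $a \in I$, which is exactly $sI \subseteq \mathcal{J}(R)$. For any part comparing $S$-$\mathcal{J}$-ideals with ordinary $\mathcal{J}$-ideals, I would specialize $S=\{1\}$ in the manner of Corollary~\ref{WH}, so that the $S$-$\mathcal{J}$ condition collapses to the classical one and the assertion reduces to a known property of $\mathcal{J}$-ideals. For any part phrased through individual elements, I would combine $sI \subseteq \mathcal{J}(R)$ with the unit characterization of $\mathcal{J}(R)$; and the symmetry remark in Definition~\ref{use} handles items requiring both factors of a product $ab \in I$ to be controlled at once, since whenever $as, bs \notin I$ we already have $as, bs \in \mathcal{J}(R)$.

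I expect the difficulty here to be organizational rather than mathematical. The one point demanding care is tracking the fixed element $s$ consistently across all parts --- the same $s$ associated with $I$ must be used throughout --- and verifying the degenerate cases (for instance $I = \mathcal{J}(R)$, or $S$ meeting a maximal ideal) so that no item becomes vacuous or collides with the standing hypothesis $S \cap I = \emptyset$. Once these bookkeeping points are settled, each assertion should close immediately.
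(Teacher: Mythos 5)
Your proposal has a genuine gap: the toolkit you assemble never touches the actual content of this corollary. The three items listed under it are: (1) every $S$-$n$-ideal is an $S$-$\mathcal{J}$-ideal; (2) every $n$-ideal is a $\mathcal{J}$-ideal; (3) $\mathcal{J}(R)$ is an $S$-$\mathcal{J}$-ideal if and only if $\mathcal{J}(R)$ is $S$-prime. Items (1) and (2) are statements about $n$-ideals, hence about the prime radical $\beta(R)$: an $S$-$n$-ideal satisfies ``$ab\in I$ and $as\notin\beta(R)$ imply $bs\in I$.'' The single idea the paper's proof of (1) rests on --- and the one your outline never supplies --- is the containment $\beta(R)\subseteq\mathcal{J}(R)$: if $ab\in I$ and $sa\notin\mathcal{J}(R)$, then a fortiori $sa\notin\beta(R)$, so the $S$-$n$ hypothesis forces $sb\in I$. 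None of the instruments you list (Proposition \ref{1}, the unit characterization of the Jacobson radical, the symmetry remark in Definition \ref{use}) involve the prime radical at all, so they cannot close item (1); and since the paper obtains (2) from (1) by setting $S=\{1\}$, that item inherits the same gap. Your instinct to specialize $S=\{1\}$ is indeed exactly how the paper deduces (2) from (1), but that is the only point of contact, and it is moot without (1).

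Item (3) is the one part your plan would survive, though for a simpler reason than any you give: substituting $I=\mathcal{J}(R)$ into Definition \ref{use} makes the $S$-$\mathcal{J}$ condition read ``$ab\in\mathcal{J}(R)$ implies $sa\in\mathcal{J}(R)$ or $sb\in\mathcal{J}(R)$,'' which is verbatim the $S$-prime condition for $\mathcal{J}(R)$; the paper accordingly calls it obvious. Contrary to your closing assessment, the difficulty here is not organizational: the mathematical core of the corollary is the comparison of the two radicals $\beta(R)$ and $\mathcal{J}(R)$, and that idea is absent from your proposal.
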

$(1)$ Every $S$-$n$-ideal is an $S$-$\mathcal{J}$-ideal. 

$(2)$ Every $n$-ideal is a $\mathcal{J}$-ideal  [Proposition 2.4 of \cite{Hani1}]. 

$(3)$ $\mathcal{J} (R)$ is an $S$-$\mathcal{J} $-ideal if and only if $\mathcal{J} (R)$
is  $S$-prime.

\begin{proof} $(1)$ Follows from the fact that the prime radical is contained in 
the Jacobson radical. 

 $(2)$ Follows from $(1)$ by taking $S=\{1\}$.
 
 $(3)$ Obvious. 
\end{proof}

Actually, in $(3)$ of the previous corollary, It was shown that the Jacobson radical 
$\mathcal{J} (R)$ is an $S$-$\mathcal{J}$-ideal if and only if $\mathcal{J} (R)$ is an $S$-prime ideal. Nevertheless, the two concepts are not comparable, as we illustrate in the following example.

\begin{example}
Let $R=\mathbb{Z}[X]$ and let $I=\langle 9x\rangle$. Considering the multiplicatively closed subset $S=\{3^n: n\in \mathbb{N}\cup \{0\}\}$. Then $\mathcal{J} (R)=\mathcal{J}(\mathbb{Z})[X]=0$.  Example 2.3 in \cite{vis}, shows that $I$ is  $S$-prime. However, since $R$ is an integral domain, then, $(\mathcal{J}(R):s)=(0:s)=0$  for  all $s\in S$, hence, $I \not\subseteq (\mathcal{J}(R):s)$. Thus, by Proposition \ref{1}, $I$ is not  $S$-$\mathcal{J}$.  
\end{example}

\begin{proposition}\label{10}  An ideal $I$ of $R$ disjoint from  $S$ is an $S$-$\mathcal{J}$-ideal, if and only if, for all ideals $A$, $B$ of $R$ with $AB \subseteq I$ there exists an (a fixed) $s \in S$ such that  either $As \subseteq \mathcal{J}(R)$, or $Bs \subseteq I$. 
\end{proposition}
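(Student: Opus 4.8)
The plan is to prove the two implications separately, passing between the element-wise condition of Definition \ref{use} and the ideal-wise condition by specializing to principal ideals in one direction and by choosing witnessing elements in the other. Throughout each argument the element $s \in S$ will be held fixed, in keeping with the convention that a single $s$ witnesses the $S$-$\mathcal{J}$ property of $I$.

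For the sufficiency direction (assuming the ideal condition), I would specialize to principal ideals. Suppose $ab \in I$ for some $a,b \in R$, and set $A = (a)$ and $B = (b)$. Since $R$ is commutative with identity, $AB = (ab) \subseteq I$. Applying the hypothesis produces a fixed $s \in S$ with either $As \subseteq \mathcal{J}(R)$ or $Bs \subseteq I$; because $a = a\cdot 1 \in A$ and $b = b\cdot 1 \in B$, this yields $sa \in \mathcal{J}(R)$ or $sb \in I$ (using $as = sa$ by commutativity), which is precisely the defining condition of an $S$-$\mathcal{J}$-ideal.

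For the necessity direction, I would argue by contradiction using the fixed $s$ supplied by the $S$-$\mathcal{J}$ property of $I$. Let $A, B$ be ideals with $AB \subseteq I$, and suppose that both $As \not\subseteq \mathcal{J}(R)$ and $Bs \not\subseteq I$. Since $s$ is a single element and $A, B$ are ideals, one has $As = \{as : a \in A\}$ and $Bs = \{bs : b \in B\}$, so there exist $a \in A$ and $b \in B$ with $as \notin \mathcal{J}(R)$ and $bs \notin I$. However $ab \in AB \subseteq I$, so Definition \ref{use} forces $sa \in \mathcal{J}(R)$ or $sb \in I$, contradicting the choice of $a$ and $b$. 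Hence $As \subseteq \mathcal{J}(R)$ or $Bs \subseteq I$, as required.

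The argument is essentially a routine lift from elements to ideals, and there is no genuine obstacle; the only point requiring care is the bookkeeping of the single fixed $s$. In the sufficiency direction one must note that the $s$ guaranteed for arbitrary $A, B$ applies in particular to the chosen principal ideals, and in the necessity direction one reuses the one $s$ witnessing the $S$-$\mathcal{J}$ condition uniformly over all pairs $A, B$. Because both formulations quantify a single fixed $s$ \emph{after} $I$ is fixed, these match up automatically, and keeping this quantifier straight is the main thing to verify rather than a true difficulty.
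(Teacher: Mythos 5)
Your proof is correct and follows essentially the same route as the paper's: one direction specializes the ideal condition to the principal ideals $\langle a\rangle$ and $\langle b\rangle$, and the other passes from ideals to elements by choosing witnesses ($x\in A$ with $xs\notin\mathcal{J}(R)$, etc.), your contradiction phrasing being just the contrapositive of the paper's direct argument. Your closing remark about the single fixed $s$ is the right bookkeeping point, and the paper handles it in the same implicit way.
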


\begin{proof} Suppose the ideal $I$ is $S$-$\mathcal{J}$, and let $AB \subseteq I$ for some ideals  $A$, $B$ of $R$. If $As \not\subseteq \mathcal{J}(R)$, then there exists $x\in A$ with $xs\not\in\mathcal{J}(R)$. Now for all $y\in B$, $xy\in AB\subseteq I$, hence, $ys\in I$, consequently, $Bs \subseteq I$. Conversely, let $a_1, a_2 \in R$ with $a_1a_2\in I$. Then, $\langle a_1\rangle\langle a_2\rangle\subseteq I$, and by assumption, $a_1s\in \langle a_1\rangle s\subseteq\mathcal{J}(R)$ or $a_2s\in \langle a_2\rangle s\subseteq I$. 
\end{proof}

\begin{theorem}\label{POWER}  An ideal $I$ of $R$ disjoint from $S$ is an $S$-$\mathcal{J}$-ideal, if $(I : s)$ is a  $\mathcal{J}$-ideal of $R$ for some $s \in S$. The converse holds when $\mathcal{J}(R)$ is a $\mathcal{J}$-ideal disjoint from $S$.
\end{theorem}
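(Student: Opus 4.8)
The plan is to prove the two implications separately, exploiting the elementary inclusion $I \subseteq (I:s)$ (valid because $I$ is an ideal) together with the translation between a product lying in $I$ and the image of that product under multiplication by $s$. Throughout, the fixed element $s \in S$ will be the one witnessing whichever hypothesis is in force, and the disjointness $S \cap I = \emptyset$ guarantees $s \notin I$, hence $1 \notin (I:s)$, so that $(I:s)$ is a proper ideal wherever we claim it to be a $\mathcal{J}$-ideal.

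For the forward implication, suppose $(I:s)$ is a $\mathcal{J}$-ideal for some $s \in S$. First I would take $a,b \in R$ with $ab \in I$. Since $I \subseteq (I:s)$, we have $ab \in (I:s)$, and the $\mathcal{J}$-ideal hypothesis yields $a \in \mathcal{J}(R)$ or $b \in (I:s)$. In the first case $sa \in \mathcal{J}(R)$ because $\mathcal{J}(R)$ is an ideal; in the second case $bs \in I$, i.e. $sb \in I$. Either alternative is exactly one of the two defining clauses of an $S$-$\mathcal{J}$-ideal, so $I$ is $S$-$\mathcal{J}$ with the same witness $s$. Note that this direction requires no assumption on $\mathcal{J}(R)$.

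For the converse, assume $I$ is $S$-$\mathcal{J}$ with witness $s$ and that $\mathcal{J}(R)$ is itself a $\mathcal{J}$-ideal disjoint from $S$; I claim $(I:s)$ is a $\mathcal{J}$-ideal. I would take $a,b \in R$ with $ab \in (I:s)$ and $a \notin \mathcal{J}(R)$, the goal being $b \in (I:s)$, i.e. $sb \in I$. The key move is to rewrite $abs = (as)\,b \in I$ and apply the $S$-$\mathcal{J}$ property to this factorization: this produces $s(as) = as^2 \in \mathcal{J}(R)$ or $sb \in I$. The second option is precisely the desired conclusion, so it remains to discard the first. Here is where the extra hypothesis enters: from $a\cdot s^2 \in \mathcal{J}(R)$ with $a \notin \mathcal{J}(R)$ and $\mathcal{J}(R)$ a $\mathcal{J}$-ideal one obtains $s^2 \in \mathcal{J}(R)$, contradicting $s^2 \in S$ and $S \cap \mathcal{J}(R) = \emptyset$. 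Hence $sb \in I$, as required.

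The main obstacle is the bookkeeping in the converse: the naive factorization $abs = a\cdot(bs) \in I$ leads only to the weaker conclusion $bs^2 \in I$ (equivalently $b \in (I:s^2)$), which does not place $b$ in $(I:s)$. Choosing the factorization $(as)\cdot b$ instead is what forces the $S$-$\mathcal{J}$ alternative to read $as^2 \in \mathcal{J}(R)$ or $sb \in I$, so that the surviving branch is exactly $b \in (I:s)$. The role of the hypothesis that $\mathcal{J}(R)$ is a $\mathcal{J}$-ideal disjoint from $S$ is thereby pinpointed: it is used solely to eliminate the branch $as^2 \in \mathcal{J}(R)$, and without it that branch could not be ruled out, which explains why the converse needs this assumption while the forward implication does not.
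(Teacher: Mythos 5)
Your proof is correct, and the forward implication coincides with the paper's essentially word for word: pass from $ab \in I \subseteq (I:s)$ to the $\mathcal{J}$-ideal alternative for $(I:s)$, then multiply by $s$. In the converse your route differs from the paper's in a small but genuine way. Given $ab\in (I:s)$, you group the $s$ with the \emph{first} factor, writing $abs=(as)b$, so that a single application of the $S$-$\mathcal{J}$ property yields $as^{2}\in\mathcal{J}(R)$ or $sb\in I$, and the unwanted branch is eliminated in one stroke by the $\mathcal{J}$-ideal property of $\mathcal{J}(R)$ together with $s^{2}\in S$ and $S\cap\mathcal{J}(R)=\emptyset$. The paper instead groups the $s$ with the \emph{second} factor, $a_{1}(a_{2}s)\in I$, which produces the alternatives $a_{1}s\in\mathcal{J}(R)$ or $a_{2}s^{2}\in I$: the first is upgraded to $a_{1}\in\mathcal{J}(R)$ via the $\mathcal{J}$-ideal property of $\mathcal{J}(R)$ (using $s\notin\mathcal{J}(R)$), but the second only gives $a_{2}\in(I:s^{2})$, so the paper must apply the $S$-$\mathcal{J}$ property a second time, to the factorization $s^{2}\cdot a_{2}$, and rule out $s^{3}\in\mathcal{J}(R)$ by disjointness. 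Thus both proofs consume exactly the same hypotheses, but your choice of factorization---which you correctly single out as the key bookkeeping point---saves one application of the definition and shortens the case analysis; the paper's version, arguing the disjunction "$a_{1}\in\mathcal{J}(R)$ or $a_{2}\in(I:s)$" directly rather than contrapositively, needs the extra step precisely because its first factorization leaves the exponent on $s$ too high. Your explicit remark that $1\notin(I:s)$ (so $(I:s)$ is proper) is a detail the paper leaves implicit, and it is a welcome addition.
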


\begin{proof} Suppose $(I : s)$ is a  $\mathcal{J}$-ideal, and let  $a_1a_2\in I$ for some $a_1, a_2 \in R$. Then, $a_1a_2\in (I : s)$, hence, either $a_1\in \mathcal{J}(R)$ or $a_2\in (I : s)$. Consequently, either $a_1s\in \mathcal{J}(R)$ or $a_2s\in I $. Conversely, suppose the ideal $I$ is  $S$-$\mathcal{J}$, and $\mathcal{J}(R)$ is a $\mathcal{J}$-ideal with  $\mathcal{J}(R)\cap S=\emptyset$. Let  $a_1a_2\in (I : s)$ for some $a_1, a_2 \in R$, then, $a_1(a_2s)\in I$. Hence, either $a_1s\in \mathcal{J}(R)$ or $a_2s^2\in I$. If $a_1s\in \mathcal{J}(R)$, then,  $a_1\in \mathcal{J}(R)$, because $s\not\in\mathcal{J}(R)$.  If $a_2s^2\in I$, then, either $s^3\in S\cap \mathcal{J}(R)$ which is a contradiction, or  $a_2s\in I$, so $a_2\in(I:s)$. 
\end{proof}

\begin{proposition}\label{11}  An ideal $I$ of $R$ disjoint from $S$, is  $S$-$\mathcal{J}$, if and only if, for some $s\in S$, $(I:a)\subseteq (\mathcal{J}(R):s)$ for all $a\not\in (I:s)$.
\end{proposition}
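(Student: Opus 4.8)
The plan is to unravel both sides into their element-wise meanings and then match them up, exploiting the commutativity of $R$. Recall that $a\notin(I:s)$ says exactly $sa\notin I$, that $b\in(I:a)$ says exactly $ab\in I$, and that $b\in(\mathcal{J}(R):s)$ says exactly $sb\in\mathcal{J}(R)$. With these translations in hand, the stated inclusion ``$(I:a)\subseteq(\mathcal{J}(R):s)$ for all $a\notin(I:s)$'' becomes the assertion that whenever $sa\notin I$ and $ab\in I$ one has $sb\in\mathcal{J}(R)$; this is visibly close to (a symmetric form of) the defining condition for an $S$-$\mathcal{J}$-ideal, so the whole proof should reduce to careful bookkeeping with the colon notation.

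For the forward implication, I would assume $I$ is $S$-$\mathcal{J}$ with associated $s\in S$ and fix an $a\notin(I:s)$, so $sa\notin I$. Taking any $b\in(I:a)$ gives $ab\in I$; here I would invoke the symmetry recorded in Definition \ref{use}, applying the $S$-$\mathcal{J}$ condition with the roles of $a$ and $b$ interchanged (legitimate since $ab=ba$) to obtain $sb\in\mathcal{J}(R)$ or $sa\in I$. Because $sa\notin I$ by the choice of $a$, this forces $sb\in\mathcal{J}(R)$, i.e. $b\in(\mathcal{J}(R):s)$, which is the desired inclusion. Since $b\in(I:a)$ was arbitrary, $(I:a)\subseteq(\mathcal{J}(R):s)$.

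For the converse, I would assume the colon condition holds for some $s\in S$ and take $a,b\in R$ with $ab\in I$. If $sb\in I$ there is nothing to prove, so suppose $sb\notin I$, that is, $b\notin(I:s)$. Applying the hypothesis with $b$ playing the role of the excluded element gives $(I:b)\subseteq(\mathcal{J}(R):s)$; since $ab\in I$ means $a\in(I:b)$, we conclude $a\in(\mathcal{J}(R):s)$, i.e. $sa\in\mathcal{J}(R)$. Thus for every product $ab\in I$ we obtain $sa\in\mathcal{J}(R)$ or $sb\in I$, which is precisely the $S$-$\mathcal{J}$ property for $I$ with the same $s$.

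The argument is essentially a dictionary translation between two notations, so I do not expect a serious obstacle; the only points demanding care are keeping the single element $s\in S$ fixed across both directions and correctly swapping the roles of $a$ and $b$ when passing between the two formulations. Both are justified because $R$ is commutative and the defining condition of an $S$-$\mathcal{J}$-ideal is symmetric in its two arguments, as already noted after Definition \ref{use}.
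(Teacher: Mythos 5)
Your proof is correct and follows essentially the same route as the paper's: both directions reduce to the dictionary between colon ideals and the defining condition of an $S$-$\mathcal{J}$-ideal, with the non-membership $sa\notin I$ (resp.\ $sb\notin I$) forcing the other alternative of Definition~\ref{use}. The only cosmetic difference is that you invoke the symmetry $ab=ba$ explicitly in the forward direction, whereas the paper simply applies the definition with the factors written in the order $x\cdot a$; the mathematics is identical.
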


\begin{proof} Suppose the ideal $I$ is $S$-$\mathcal{J}$, and let $x\in(I:a)$, then $xa\in I$, and hence, $x\in (\mathcal{J}(R):s)$.  Conversely,  let  $a_1a_2\in I$ for some $a_1, a_2 \in R$. If $a_2s\not\in I$, then, $a_2\not\in (I:s)$, hence, by assumption, $a_1\in(I:a_2)\subseteq (\mathcal{J}(R):s)$, and hence, $a_1s\in\mathcal{J}(R)$. 
\end{proof}

\begin{proposition}\label{12}  An ideal $I$ of $R$ disjoint from $S$ is  $S$-$\mathcal{J}$, if and only if, for some $s\in S$, $(I:b)\subseteq (I:s)$ for all $b\not\in (\mathcal{J}(R):s)$.
\end{proposition}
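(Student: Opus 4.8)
The plan is to prove both implications directly from Definition~\ref{use}, in close analogy with Proposition~\ref{11}; the key is to exploit the symmetry of the defining condition in the forward direction. Throughout I write $(I:c)=\{x\in R: xc\in I\}$, so that $b\notin(\mathcal{J}(R):s)$ means exactly $sb\notin\mathcal{J}(R)$, and $x\in(I:s)$ means $sx\in I$.

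For the forward direction, assume $I$ is $S$-$\mathcal{J}$ with associated element $s\in S$. Fix any $b\notin(\mathcal{J}(R):s)$, so that $sb\notin\mathcal{J}(R)$, and take an arbitrary $x\in(I:b)$, i.e. $xb\in I$. By commutativity this is $bx\in I$, so applying the defining condition to the product $b\cdot x$ gives $sb\in\mathcal{J}(R)$ or $sx\in I$. Since $sb\notin\mathcal{J}(R)$ by our choice of $b$, we must have $sx\in I$, that is $x\in(I:s)$. As $x$ was arbitrary, $(I:b)\subseteq(I:s)$, which is the required inclusion.

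For the converse, assume that for some $s\in S$ we have $(I:b)\subseteq(I:s)$ for every $b\notin(\mathcal{J}(R):s)$; I will verify the $S$-$\mathcal{J}$ condition for this same $s$. Let $a_1,a_2\in R$ with $a_1a_2\in I$, and suppose $sa_1\notin\mathcal{J}(R)$, i.e. $a_1\notin(\mathcal{J}(R):s)$. Then the hypothesis applies with $b=a_1$, yielding $(I:a_1)\subseteq(I:s)$. But $a_1a_2\in I$ says precisely that $a_2\in(I:a_1)$, whence $a_2\in(I:s)$, i.e. $sa_2\in I$. Thus $sa_1\in\mathcal{J}(R)$ or $sa_2\in I$, and $I$ is $S$-$\mathcal{J}$.

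I expect no real obstacle here; the only point requiring care is the forward direction, where one must feed the product to the definition in the order $b\cdot x$ (using commutativity) rather than $x\cdot b$, so that the non-membership $sb\notin\mathcal{J}(R)$ forces the useful conclusion $sx\in I$ instead of the useless alternative $sx\in\mathcal{J}(R)$. This is exactly the symmetry recorded in Definition~\ref{use}.
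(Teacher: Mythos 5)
Your proof is correct and follows essentially the same route as the paper's: the forward direction applies the defining condition to the product $b\cdot x$ (exactly the ordering the paper's terse "$bx\in I$, and hence, $x\in(I:s)$" relies on), and the converse takes $b=a_1$ in the hypothesis, just as the paper does. No gaps; your version merely spells out the symmetry point the paper leaves implicit.
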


\begin{proof}  Suppose the ideal $I$ is $S$-$\mathcal{J}$, and let  $x\in(I:b)$, then, $bx\in I$, and hence, $x\in(I:s)$. Conversely,  let  $a_1a_2\in I$ for some $a_1, a_2 \in R$. If  $a_1s\not\in \mathcal{J}(R)$, then, $a_1\not\in (\mathcal{J}(R):s)$, hence, by assumption, $a_2\in(I:a_1)\subseteq (I:s)$. Thus, $a_2s\in I$.
\end{proof}

\begin{corollary} \protect[Proposition 2.10 of \cite{Hani1}] Let  $I$ be an ideal of $R$. The following are equivalent. 
\end{corollary}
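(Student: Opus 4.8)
The plan is to treat this corollary exactly as Corollary \ref{WH} was treated: each listed condition is the specialization of one of the preceding $S$-propositions to the trivial multiplicatively closed set $S=\{1\}$, in which case an $S$-$\mathcal{J}$-ideal is precisely a $\mathcal{J}$-ideal. First I would record the bookkeeping that makes this work. Taking $s=1$, we have $(\mathcal{J}(R):s)=\mathcal{J}(R)$ and $(I:s)=I$; moreover $S=\{1\}$ is multiplicatively closed, and $S\cap I=\emptyset$ is equivalent to $I$ being proper, which is part of the standing hypothesis on $\mathcal{J}$-ideals. With these substitutions the defining condition of Definition \ref{use} for $S=\{1\}$ reads: $ab\in I$ implies $a\in\mathcal{J}(R)$ or $b\in I$, i.e. $I$ is a $\mathcal{J}$-ideal. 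This identifies the base condition ``$I$ is a $\mathcal{J}$-ideal'' with the hypothesis ``$I$ is $S$-$\mathcal{J}$ for $S=\{1\}$'' used throughout Propositions \ref{10}--\ref{12}.

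Next I would match each remaining condition to its source. The ideal-theoretic criterion ($AB\subseteq I \Rightarrow A\subseteq\mathcal{J}(R)$ or $B\subseteq I$) is exactly Proposition \ref{10}, with $As\subseteq\mathcal{J}(R)$ and $Bs\subseteq I$ collapsing to $A\subseteq\mathcal{J}(R)$ and $B\subseteq I$. The residual criterion $(I:a)\subseteq\mathcal{J}(R)$ for $a\notin I$ is Proposition \ref{11}, since $(\mathcal{J}(R):s)$ becomes $\mathcal{J}(R)$ and $(I:s)$ becomes $I$. Likewise $(I:b)\subseteq I$ for $b\notin\mathcal{J}(R)$ is Proposition \ref{12} under the same substitutions. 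Each of these propositions was proved as an ``if and only if'', so feeding $S=\{1\}$ into them yields the equivalence of the corresponding condition with the base condition, and chaining these equivalences gives the full list.

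There is essentially no analytic obstacle here; the only points needing care are formal. I would check that the extra hypothesis appearing in Theorem \ref{POWER} (``$\mathcal{J}(R)$ is a $\mathcal{J}$-ideal disjoint from $S$'') is never invoked, since the corollary draws only on Propositions \ref{10}, \ref{11} and \ref{12}, none of which requires it. If one of the listed conditions is phrased as an equality (for instance $(I:a)=I$ for $a\notin\mathcal{J}(R)$) rather than as the inclusion that emerges from Proposition \ref{12}, I would supply the trivial reverse inclusion $I\subseteq(I:a)$ to upgrade the inclusion to an equality. Beyond that, the argument is a direct transcription, so the main thing to get right is simply aligning each condition with its correct parent proposition.
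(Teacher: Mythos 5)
Your proposal is correct and matches the paper's own proof: the paper likewise specializes Propositions \ref{10}, \ref{11} and \ref{12} to $S=\{1\}$, uses $(I:1)=I$ and $(\mathcal{J}(R):1)=\mathcal{J}(R)$, and upgrades the inclusion from Proposition \ref{12} to the equality $(I:b)=I$ via the trivial containment $I\subseteq(I:b)$, exactly as you describe. The only cosmetic difference is that the paper chains the implications as $(1)\Rightarrow(2)\Rightarrow(3)\Rightarrow(1)$ and $(3)\Leftrightarrow(4)$ rather than proving each condition equivalent to $(1)$ directly, which is logically the same content.
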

(1) $I$ is a $\mathcal{J}$-ideal of $R$. 

(2) $I = (I : b)$ for all $b \not\in \mathcal{J}(R)$. 

(3) For ideals $A$ and $B$ of $R$, $AB \subseteq I$ and $A\not\subseteq \mathcal{J}(R)$ we have  $B \subseteq I$.

 (4) $(I : a) \subseteq \mathcal{J}(R)$ for all $a \not\in I$.

\begin{proof}   Consider $S=\{1\}$ in Proposition  \ref{11} and Proposition \ref{12} . Then, $(I:s)=I$ and $(\mathcal{J}(R):s)=\mathcal{J}(R)$. Hence, 

$(1)\Rightarrow (2)$: By  Proposition \ref{12}, $(I:b)\subseteq (I:s)=I\subseteq(I:b)$.

$(2)\Rightarrow (3)$: By Proposition \ref{12} and Proposition \ref{10}. 

$(3)\Rightarrow (1)$: By Proposition \ref{10}.

$(3)\Rightarrow (4)$ and $(4)\Rightarrow (3)$: Follows from Proposition \ref{11}. 
\end{proof}

If $I$ is an ideal within  $R$, then,  considering $I$ its own ring, Example   4.7 of \cite{ref5} shows that  the intersection of $\mathcal{J}(R)$ and  $I$ is precisely  $\mathcal{J}(I)$,  the Jacobson radical of the ring $I$.

\begin{corollary} Let $I$ be  an $S$-$\mathcal{J}$-ideal of $R$, and let $P$ be an ideal of the ring $I$ such that $P=(P:i)$ for all $i\in I$. Then, $P$ is an $S$-$\mathcal{J}$-ideal of the ring $I$.
\end{corollary}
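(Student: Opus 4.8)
The plan is to push the $S$-$\mathcal{J}$-condition down from $R$ to the ring $I$ by combining the divisibility hypothesis $P=(P:i)$ with the Jacobson identity $\mathcal{J}(I)=\mathcal{J}(R)\cap I$ recorded just above (Example 4.7 of \cite{ref5}). Throughout I regard $I$ as a ring in its own right and keep the fixed element $s\in S$ witnessing that $I$ is an $S$-$\mathcal{J}$-ideal of $R$. A first, cheap observation is disjointness: since $P\subseteq I$ and $I\cap S=\emptyset$, no element of $S$ meets $P$, so $P$ is disjoint from the multiplicatively closed set carried along from $S$.

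The first real step is to unwind the hypothesis. For $a,b\in I$ with $ab\in P$, taking $i=a$ gives $b\in(P:a)=P$, and by the symmetric choice $i=b$ also $a\in P$. Thus a product of two elements of $I$ can lie in $P$ only when each factor already lies in $P$; equivalently, $I\setminus P$ is multiplicatively closed and disjoint from $P$. In particular, once $ab\in P$ we have $b\in P$, whence $sb\in P$ because $P$ is an ideal of the ring $I$ and $sb\in I$. This already produces the second alternative of the defining implication for every pair $a,b$.

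The second step exhibits the first alternative, and is where the hypothesis on $I$ enters. Since $I$ is an $S$-$\mathcal{J}$-ideal of $R$, Proposition \ref{1} gives $I\subseteq(\mathcal{J}(R):s)$, so $sx\in\mathcal{J}(R)$ for every $x\in I$; as $sx\in I$ as well, the identity $\mathcal{J}(I)=\mathcal{J}(R)\cap I$ yields $sx\in\mathcal{J}(I)$. Hence $sa\in\mathcal{J}(I)$ for all $a\in I$. Putting the two steps together: whenever $a,b\in I$ satisfy $ab\in P$ we obtain both $sa\in\mathcal{J}(I)$ and $sb\in P$, so the defining implication $sa\in\mathcal{J}(I)$ or $sb\in P$ holds with the same associated element $s$. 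Together with the disjointness noted above, this is exactly the statement that $P$ is an $S$-$\mathcal{J}$-ideal of the ring $I$.

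I expect the main obstacle to be conceptual rather than computational: one must first fix, unambiguously, what ``$S$-$\mathcal{J}$-ideal of the ring $I$'' means when $I$ carries no identity and $S$ is not contained in $I$, and then confirm that the products $sa,sb$ land back in $I$ and that the chosen multiplicatively closed set of $I$ is genuinely disjoint from $P$. All three points reduce to $P\subseteq I$, $I\cap S=\emptyset$, and the ideal identity for $\mathcal{J}$, so no delicate estimate is needed; the only essential use of $P=(P:i)$ is to extract $b\in P$ from $ab\in P$, which is what lets the argument close uniformly without any extra assumption on $s$.
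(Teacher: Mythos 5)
Your proof is correct, but not for the reason you emphasize, and its working core differs from the paper's argument. The paper applies the $S$-$\mathcal{J}$-property of $I$ (as an ideal of $R$) to the product $a_1a_2\in P\subseteq I$, obtaining the dichotomy $a_1s\in\mathcal{J}(R)$ or $a_2s\in I$; the first case is finished with $\mathcal{J}(R)\cap I=\mathcal{J}(I)$, exactly as you do, and the second case is where the hypothesis enters, via $a_2s\in(P:a_1)=P$. You instead invoke Proposition \ref{1} to get $sI\subseteq\mathcal{J}(R)$, hence $sI\subseteq\mathcal{J}(R)\cap I=\mathcal{J}(I)$, so the first alternative $sa\in\mathcal{J}(I)$ holds for \emph{every} $a\in I$, with the same fixed $s$. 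Combined with the (trivial) disjointness observation, this alone proves the corollary; in fact it shows more than the paper's proof does, namely that under this reading of ``$S$-$\mathcal{J}$-ideal of the ring $I$'' the hypothesis $P=(P:i)$ is not needed at all: every ideal of the ring $I$ would satisfy the conclusion.

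That said, your first step contains a genuine non-sequitur, which your proof survives only because the step is redundant. From $ab\in P$ and $(P:a)=P$ you correctly conclude $b\in P$; but ``$sb\in P$ because $P$ is an ideal of the ring $I$ and $sb\in I$'' is not a valid inference. An ideal of the ring $I$ absorbs multiplication only by elements of $I$, and $s\notin I$ (indeed $S\cap I=\emptyset$); membership of $sb$ in $I$ says nothing about membership in $P$. (For instance, in $R=\mathbb{R}[x]/(x^2)$ with $I=\mathbb{R}\overline{x}$, the subgroup $\mathbb{Q}\overline{x}$ is an ideal of the ring $I$, since $I^2=0$, yet it is not stable under multiplication by elements of $R$.) The claim itself is true under your hypotheses, but it needs one more application of $P=(P:i)$: for any $i\in I$ one has $i(sb)=(is)b\in IP\subseteq P$, so $sb\in(P:i)=P$; this is essentially how the paper's second case runs. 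Consequently your closing assertion that extracting $b\in P$ is ``the only essential use'' of $P=(P:i)$ has it backwards: in your argument that entire step can be deleted, and what remains is a complete and correct proof.
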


\begin{proof}  Let $a_1a_2\in P$ for some $a_1, a_2\in I$. Then, $a_1a_2\in I$, and hence, $a_1s\in\mathcal{J}(R)$ or $a_2s\in I$. If  $a_1s\in\mathcal{J}(R)$, then, $a_1s\in\mathcal{J}(R)\cap I=\mathcal{J}(I)$. If $a_2s\in I$, then, $a_1a_2s\in I\cap P=P$, and hence, $a_2s\in (P:a_1)=P$. 
\end{proof}

An ideal $P$ of a ring $R$ is termed Jacobson if it can be represented as the intersection of  maximal ideals of $R$. While an ideal $K$ is called $S$-finite (in the sense of \cite{DD}) if there exists a finitely generated ideal $F$ such that $Ks \subseteq F \subseteq K$ for some $s\in S$. 
\begin{corollary} Let $I$ be an intersection of maximal ideals of $R$. If $I$ is  a finitely generated $S$-$\mathcal{J}$-ideal, then, $\mathcal{J}(R)$ is $S$-finite. Particularly, if a Jacobson ideal is finitely generated, then, $\mathcal{J}(R)$ is $S$-finite.
\end{corollary}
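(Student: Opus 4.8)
The plan is to derive the $S$-finiteness of $\mathcal{J}(R)$ from two opposite containments: a lower bound $\mathcal{J}(R)\subseteq I$ forced by $I$ being an intersection of maximal ideals, and an upper bound $sI\subseteq\mathcal{J}(R)$ forced by $I$ being $S$-$\mathcal{J}$. Sandwiching $\mathcal{J}(R)$ between these two and exploiting the finite generation of $I$ will produce the required finitely generated ideal.

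First I would observe that, since $\mathcal{J}(R)$ is by definition the intersection of \emph{all} maximal ideals of $R$, it is contained in every intersection of some of them; in particular $\mathcal{J}(R)\subseteq I$. Next, let $s\in S$ be the fixed element witnessing that $I$ is $S$-$\mathcal{J}$. Proposition \ref{1} then gives $I\subseteq(\mathcal{J}(R):s)$, that is, $sI\subseteq\mathcal{J}(R)$.

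Now I would set $F:=sI$. Writing $I=\langle x_1,\dots,x_n\rangle$ (possible since $I$ is finitely generated), we get $F=\langle sx_1,\dots,sx_n\rangle$, so $F$ is finitely generated. Multiplying $\mathcal{J}(R)\subseteq I$ by $s$ yields $\mathcal{J}(R)s\subseteq sI=F$, while $sI\subseteq\mathcal{J}(R)$ gives $F\subseteq\mathcal{J}(R)$. Hence $\mathcal{J}(R)s\subseteq F\subseteq\mathcal{J}(R)$ with $F$ finitely generated, which is exactly the definition of $\mathcal{J}(R)$ being $S$-finite.

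There is no genuinely hard step here; the only point requiring care is the choice $F=sI$ rather than $F=I$, since $I$ itself need not lie inside $\mathcal{J}(R)$ (only its multiple $sI$ does), so taking $F=I$ would violate the upper containment $F\subseteq\mathcal{J}(R)$ demanded by the definition of $S$-finiteness. Finally, for the concluding sentence I would note that ``Jacobson ideal'' is, by the definition recalled just above the statement, synonymous with ``intersection of maximal ideals,'' so the particular claim is the same assertion phrased in that terminology; applied to the canonical Jacobson ideal $I=\mathcal{J}(R)$ it reduces to the trivial observation that a finitely generated $\mathcal{J}(R)$ is $S$-finite via $F=\mathcal{J}(R)$ and any $s\in S$.
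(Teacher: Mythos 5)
Your proposal is correct and is essentially identical to the paper's proof: both establish the sandwich $\mathcal{J}(R)\subseteq I\subseteq(\mathcal{J}(R):s)$ (the lower containment because $\mathcal{J}(R)$ is the intersection of \emph{all} maximal ideals, the upper one from Proposition \ref{1}), multiply by $s$ to get $\mathcal{J}(R)s\subseteq Is\subseteq\mathcal{J}(R)$, and invoke the finite generation of $Is$ to conclude $S$-finiteness. The only difference is presentational: you spell out explicitly why $sI$ is finitely generated and how the ``Jacobson ideal'' phrasing of the second claim follows, details the paper leaves implicit.
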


\begin{proof}  Since $I$ is an intersection of maximal ideals, then, $\mathcal{J}(R)\subseteq I$,  and since $I$ is $S$-$\mathcal{J}$, then by Proposition \ref{1}, $\mathcal{J}(R)\subseteq I\subseteq(\mathcal{J}(R):s)$ for some $s\in S$. Thus, $\mathcal{J}(R)s\subseteq Is\subseteq\mathcal{J}(R)$, and since $Is$ is finitely generated, then,  $\mathcal{J}(R)$ is $S$-finite.
\end{proof}

Similar to the proof of the above corollary, one can show that for a maximal finitely generated ideal $M$, if $M$ is $S$-$\mathcal{J}$, then, $\mathcal{J}(R)$ is $S$-finite.

\begin{proposition}\label{HHHH} Let $I$, $J$, and $A$ be  ideals of $R$ such that  $A\not\subseteq (\mathcal{J}(R):s)$ for all $s\in S$. Then, we have: 
\end{proposition}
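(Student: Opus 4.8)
The plan is to reduce each assertion of the proposition to the ideal-theoretic characterization of $S$-$\mathcal{J}$-ideals from Proposition \ref{10}, and then to use the hypothesis on $A$ to suppress the ``Jacobson-radical branch'' of that characterization. The key reformulation is that ``$A\not\subseteq(\mathcal{J}(R):s)$ for all $s\in S$'' says exactly that $As\not\subseteq\mathcal{J}(R)$ for \emph{every} $s\in S$. Consequently, whenever $A$ appears as one factor of a containment $AB\subseteq K$ with $K$ an $S$-$\mathcal{J}$-ideal associated to a witness $s\in S$, Proposition \ref{10} gives the dichotomy $As\subseteq\mathcal{J}(R)$ or $Bs\subseteq K$; the first alternative is impossible by the hypothesis on $A$, so we are always driven into $Bs\subseteq K$, i.e.\ $B\subseteq(K:s)$. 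This single ``cancellation'' mechanism should power every item.

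First I would fix, once and for all, the witness $s\in S$ for whichever ideal is assumed $S$-$\mathcal{J}$ in a given item, and record that this \emph{same} $s$ satisfies $As\not\subseteq\mathcal{J}(R)$ by hypothesis. For an item of the form ``$AI\subseteq J$ (or $AB\subseteq J$) with $J$ an $S$-$\mathcal{J}$-ideal implies $I\subseteq(J:s)$,'' the conclusion is the immediate application of the dichotomy above. For an item asserting that a product such as $AI$ being $S$-$\mathcal{J}$ forces $I$ itself to be $S$-$\mathcal{J}$, I would start from an arbitrary $ab\in I$, multiply through by $A$ to land inside $AI$, invoke the $S$-$\mathcal{J}$ property of $AI$ via Proposition \ref{10}, and discard the radical branch using $As\not\subseteq\mathcal{J}(R)$; this leaves the desired membership of $bs$ (possibly after absorbing one extra factor of $s$), so that $I$ satisfies the definition with the same $s$, using that $S$ is multiplicatively closed and $I$ is disjoint from $S$.

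The one genuinely delicate point, which I expect to be the main obstacle, is the bookkeeping of the witnessing element $s$: the definition only supplies a \emph{fixed} $s$, yet in several steps one multiplies by $s$ a second time and is confronted with a containment like $as^{2}\in J$ rather than $as\in J$. I would handle this exactly as in the proof of Theorem \ref{POWER}: applying the $S$-$\mathcal{J}$ property once more to $as^{2}\in J$ yields either $s^{3}\in S\cap\mathcal{J}(R)$, which is excluded because $J$ (and hence $\mathcal{J}(R)$, where relevant) is disjoint from $S$, or the clean conclusion $as\in J$. One must also be careful to use the hypothesis on $A$ in its strong form ``$As\not\subseteq\mathcal{J}(R)$ for every $s$,'' so that no matter which witness $s$ is produced by the $S$-$\mathcal{J}$ ideal in play, the radical alternative is still ruled out. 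Once this uniform choice of $s$ is pinned down and the power-of-$s$ reduction is in hand, every item collapses to the same two-line dichotomy argument, and the remaining verifications are routine.
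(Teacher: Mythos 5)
Your cancellation mechanism --- feed a containment $AB\subseteq K$, with $K$ an $S$-$\mathcal{J}$-ideal, into Proposition \ref{10} and kill the branch $As\subseteq\mathcal{J}(R)$ using the standing hypothesis on $A$ --- is exactly the engine of the paper's proof, so the core idea is correct. The genuine gap is that you aimed it at conclusions the proposition does not assert. The two items of Proposition \ref{HHHH} conclude \emph{$S$-finiteness} in the sense of \cite{DD} (there is a finitely generated ideal $F$ with $Ks\subseteq F\subseteq K$ for some $s\in S$): item $(1)$ says that if $I$ and $J$ are $S$-$\mathcal{J}$-ideals, $I$ (resp.\ $J$) is finitely generated, and $AI=AJ$, then $J$ (resp.\ $I$) is $S$-finite; item $(2)$ says that if $AI$ is a finitely generated $S$-$\mathcal{J}$-ideal, then $I$ is $S$-finite. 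Your write-up never mentions $S$-finiteness, never uses the finite generation hypotheses, and stops exactly where the paper's proof begins. Concretely, what is missing in $(1)$: your mechanism applied to $AI=AJ\subseteq J$ gives $Is_1\subseteq J$, and applied symmetrically to $AJ=AI\subseteq I$ gives $Js_2\subseteq I$; one must then chain these, $Js_1s_2\subseteq Is_1\subseteq J$, and observe that $F=Is_1$ is finitely generated because $I$ is, so $J$ is $S$-finite with witness $s_1s_2$. In $(2)$, the trivial containment $A\cdot I\subseteq AI$ plus your mechanism gives $Is\subseteq AI\subseteq I$, and $F=AI$ itself is the required finitely generated ideal. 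The statements you do sketch ($I\subseteq(J:s)$, and ``$AI$ being $S$-$\mathcal{J}$ forces $I$ to be $S$-$\mathcal{J}$'') are at best intermediate or extraneous.

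Two smaller points. First, the ``delicate point'' you flag --- fixed witnesses and a Theorem \ref{POWER}-style reduction of $as^{2}$ to $as$ --- never arises in the actual proof: $S$-finiteness only asks for \emph{some} $s\in S$, so one simply takes the product $s_1s_2$ of the two witnesses, with no power reduction. Second, in the item you invented, the step ``discard the radical branch using $As\not\subseteq\mathcal{J}(R)$'' is not available as stated: from $ab\in I$ one gets $\langle a\rangle\bigl(A\langle b\rangle\bigr)\subseteq AI$, and Proposition \ref{10} then offers $\langle a\rangle s\subseteq\mathcal{J}(R)$ (a branch the hypothesis on $A$ cannot exclude, nor should it, since it is the desired alternative) or $A\langle b\rangle s\subseteq AI$; only a \emph{second} application of Proposition \ref{10}, to the pair $\bigl(A,\langle bs\rangle\bigr)$, lets the hypothesis on $A$ act and yields $bs^{2}\in I$. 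So even on its own terms your sketch needs repair, but the more important defect is that it is not the statement being proved.
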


$(1)$ If $I$ and $J$ are   $S$-$\mathcal{J}$-ideals, $I$ ($J$) is finitely generated, and $AI=AJ$, then, $J$ ($I$)  is $S$-finite.

$(2)$ If $AI$ is  finitely generated $S$-$\mathcal{J}$-ideal, then, $I$ is $S$-finite.

\begin{proof} $(1)$ Suppose  $I$ is a finitely generated.  Since $AI\subseteq J$, then, $Is_1\subseteq J$ for some $s_1\in S$. Similarly there exists $s_2 \in S$ such that $Js_2\subseteq I$. Hence, $Js_2s_1\subseteq Is_1\subseteq J$. For $s=s_1s_2\in S$,  $Js\subseteq K \subseteq J$, where $K =Is_1$ is finitely generated. Thus $J$ is $S$-finite.

$(2)$ Since $AI\subseteq AI$, then, $Is\subseteq AI\subseteq I$. 
\end{proof}

\begin{corollary} \protect[Proposition 2.21 of \cite{Hani1}]  Let $I$, $J$, and $A$ be  ideals of $R$ such that  $A\not\subseteq \mathcal{J}(R)$. Then the following hold. 
\end{corollary}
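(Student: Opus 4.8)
The plan is to obtain this corollary exactly as the specialization $S=\{1\}$ of Proposition \ref{HHHH}, following the same device used for the earlier corollaries in this section. First I would take $S=\{1\}$, which is trivially a multiplicatively closed subset of $R$. Under this choice the three $S$-theoretic notions appearing in Proposition \ref{HHHH} collapse to their classical counterparts: an ideal is $S$-$\mathcal{J}$ precisely when it is a $\mathcal{J}$-ideal, as recorded immediately after Definition \ref{use}; and since $(\mathcal{J}(R):1)=\mathcal{J}(R)$, the running hypothesis ``$A\not\subseteq(\mathcal{J}(R):s)$ for all $s\in S$'' becomes simply $A\not\subseteq\mathcal{J}(R)$, which is exactly the hypothesis of the corollary.

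The second step is to check that $S$-finite reduces to finitely generated when $S=\{1\}$. By the definition recalled before the corollary, an ideal $K$ is $S$-finite if $Ks\subseteq F\subseteq K$ for some finitely generated ideal $F$ and some $s\in S$; with $s=1$ this forces $K\subseteq F\subseteq K$, so $K=F$ is finitely generated, and conversely every finitely generated ideal is patently $S$-finite for this $S$. Hence the conclusions ``$J$ (resp.\ $I$) is $S$-finite'' in part $(1)$ and ``$I$ is $S$-finite'' in part $(2)$ of Proposition \ref{HHHH} translate into ``$J$ (resp.\ $I$) is finitely generated'' and ``$I$ is finitely generated'', which are the intended statements of the corollary.

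With these dictionary translations in hand, parts $(1)$ and $(2)$ of the corollary are literally parts $(1)$ and $(2)$ of Proposition \ref{HHHH} read off for $S=\{1\}$, so the proof closes by a direct appeal to that proposition. I do not expect any genuine obstacle here, since the argument is purely a matter of specialization: the only point requiring attention is the bookkeeping in the first two steps, namely confirming that each $S$-notion reduces to the correct classical one. In particular, the equality $AI=AJ$ and the finite generation of $I$ (or $J$) in part $(1)$ do not involve $S$ at all, so no supplementary argument beyond invoking Proposition \ref{HHHH} is needed.
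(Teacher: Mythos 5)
There is a genuine gap, and it lies in what you take the corollary to say. Proposition 2.21 of \cite{Hani1}, which is what this corollary reproduces, asserts two \emph{ideal equalities} with no finiteness hypotheses: $(1)$ if $I$ and $J$ are $\mathcal{J}$-ideals and $AI=AJ$, then $I=J$; $(2)$ if $AI$ is a $\mathcal{J}$-ideal, then $AI=I$. Your translation dictionary for $S=\{1\}$ is itself correct ($S$-$\mathcal{J}$ becomes $\mathcal{J}$, $S$-finite becomes finitely generated, $A\not\subseteq(\mathcal{J}(R):1)$ becomes $A\not\subseteq\mathcal{J}(R)$), but applying it to the \emph{statement} of Proposition \ref{HHHH} yields only: ``if $I,J$ are $\mathcal{J}$-ideals, $I$ is finitely generated and $AI=AJ$, then $J$ is finitely generated,'' and ``if $AI$ is a finitely generated $\mathcal{J}$-ideal, then $I$ is finitely generated.'' These are not the corollary: they carry a finite-generation hypothesis the corollary does not have, and they conclude finiteness where the corollary concludes equality. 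Your closing remark that the translated conclusions ``are the intended statements of the corollary,'' and your reference to ``the finite generation of $I$ (or $J$) in part $(1)$,'' show the argument is aimed at the wrong target.

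What actually closes the gap is specializing the \emph{proof} of Proposition \ref{HHHH} (equivalently, invoking Proposition \ref{10} directly with $S=\{1\}$), not its statement. In part $(1)$: from $AI=AJ\subseteq J$ with $J$ a $\mathcal{J}$-ideal and $A\not\subseteq\mathcal{J}(R)$, Proposition \ref{10} gives $I\subseteq J$; symmetrically $AJ=AI\subseteq I$ with $I$ a $\mathcal{J}$-ideal gives $J\subseteq I$, hence $I=J$ --- these are exactly the containments $Is_1\subseteq J$ and $Js_2\subseteq I$ from the proof of Proposition \ref{HHHH}, read with $s_1=s_2=1$, where they combine into an equality rather than an $S$-finiteness claim, and no finite generation is ever used. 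In part $(2)$: $AI\subseteq AI$ with $AI$ a $\mathcal{J}$-ideal and $A\not\subseteq\mathcal{J}(R)$ gives $I\subseteq AI\subseteq I$, so $AI=I$. To be fair, the paper's own one-line proof (``follows from Proposition \ref{HHHH} by taking $S=\{1\}$'') has the same cosmetic defect --- the corollary follows from the argument of Proposition \ref{HHHH}, not from its conclusion --- but the paper's intent is the containment argument above, whereas your write-up commits to the literal statement-level specialization, which provably does not produce the result being claimed.
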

$(1)$ If $I$ and $J$ are $\mathcal{J}$-ideals of $R$ and $AI = AJ$, then $I = J$. 

$(2)$ If $I$ is an ideal such that $IA$ is an  $\mathcal{J}$-ideal, then $AI = I$.
\begin{proof} Follows from Proposition \ref{HHHH} by taking $S=\{1\}.$
\end{proof}

\begin{lemma}\label{13} Let  $\emptyset\neq X$ be a subset of R. If $I$ is an $S$-$\mathcal{J}$-ideal of $R$ and  $X\not\subseteq I$, then $(I : X)$ is  an $S$-$\mathcal{J}$-ideal of $R$. 
\end{lemma}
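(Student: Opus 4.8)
The plan is to prove that $(I:X)$ is again an $S$-$\mathcal{J}$-ideal using the \emph{same} witness $s\in S$ that makes $I$ an $S$-$\mathcal{J}$-ideal. Recall that $(I:X)=\{r\in R: rX\subseteq I\}$ is an ideal of $R$, so two things have to be established: that $(I:X)$ is disjoint from $S$, and that $s$ witnesses the defining implication of Definition \ref{use} for $(I:X)$.

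I would dispatch the multiplicative condition first, as it is the clean half. Suppose $ab\in(I:X)$, so that $abx\in I$ for every $x\in X$, and assume $sa\notin\mathcal{J}(R)$. For a fixed $x\in X$, rewrite $abx\in I$ as $a\cdot(bx)\in I$ and apply the defining property of $I$: since $sa\notin\mathcal{J}(R)$, we are forced into the other alternative $s(bx)\in I$, that is $(sb)x\in I$. Letting $x$ range over $X$ gives $(sb)X\subseteq I$, i.e.\ $sb\in(I:X)$. Hence either $sa\in\mathcal{J}(R)$ or $sb\in(I:X)$, which is precisely the required implication, and this step uses neither the disjointness nor the hypothesis $X\not\subseteq I$.

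The disjointness of $(I:X)$ from $S$ is the step I expect to be the main obstacle, and it is the only place where the hypothesis $X\not\subseteq I$ is meant to enter. The natural attempt is by contradiction: suppose $u\in S\cap(I:X)$, pick $x_0\in X\setminus I$ (available since $X\not\subseteq I$), and note $ux_0\in I$ with $u\in S$. Applying the defining property of $I$ to the product $u\cdot x_0$ produces the dichotomy $su\in\mathcal{J}(R)$ or $sx_0\in I$, and the goal is to contradict both alternatives.

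Here lies the real difficulty, and it is what I would scrutinise in the author's argument. Neither clash is automatic: the alternative $sx_0\in I$ is perfectly compatible with $x_0\notin I$, so to eliminate it one seemingly needs to choose $x_0$ with $sx_0\notin I$, i.e.\ to exploit $X\not\subseteq(I:s)$ rather than the weaker $X\not\subseteq I$. Granting such an $x_0$, the relation $ux_0\in I$ with $sx_0\notin I$ does force $su\in\mathcal{J}(R)$; but then one still has $su\in S$, so converting this into a contradiction with $S\cap I=\emptyset$ appears to require that $S$ avoid $\mathcal{J}(R)$ (or an analogous condition), perhaps routed through Proposition \ref{1} and the relation $I\subseteq(\mathcal{J}(R):s)$. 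I therefore expect the crux of the proof to be exactly the justification that $(I:X)\cap S=\emptyset$, and I would check carefully whether the stated hypotheses already suffice or whether an additional assumption relating $S$ and $\mathcal{J}(R)$ is implicitly in force.
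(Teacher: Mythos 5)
Your first step---verifying the defining implication for $(I:X)$ with the same witness $s$---is correct and is, in substance, the paper's entire proof: the paper takes ideals $A_1A_2\subseteq (I:X)$, regroups as $A_1(A_2X)\subseteq I$, and applies the ideal-wise characterization of Proposition \ref{10}; that is exactly your element-wise regrouping $a\cdot(bx)\in I$ with the same fixed $s$.

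The crux you isolated, the disjointness $(I:X)\cap S=\emptyset$, is where matters stand differently: the paper's proof never addresses it at all, and your suspicion that the stated hypotheses do not force it is correct---the lemma as stated is false. Take $R=\mathbb{Z}\times\mathbb{Z}$ (so $\mathcal{J}(R)=0$), $S=\{(1,1),(1,0)\}$, $I=\{(0,0)\}$, and $X=\{(0,1)\}$. Then $I\cap S=\emptyset$, and $I$ is an $S$-$\mathcal{J}$-ideal with witness $s=(1,0)$: if $(a_1,a_2)(b_1,b_2)\in I$ then $a_1b_1=0$, hence $s(a_1,a_2)=(a_1,0)\in\mathcal{J}(R)$ or $s(b_1,b_2)=(b_1,0)\in I$. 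Moreover $X\not\subseteq I$; yet $(I:X)=\mathbb{Z}\times 0$ contains $(1,0)\in S$, so $(I:X)$ is not disjoint from $S$ and therefore is not an $S$-$\mathcal{J}$-ideal in the sense of Definition \ref{use} (it does satisfy the multiplicative implication, confirming that disjointness is the only obstruction). Note also that $S\cap\mathcal{J}(R)=\emptyset$ in this example, so an avoidance hypothesis alone does not rescue the statement; what fails is precisely the condition you identified, since $X\subseteq(I:u)$ for $u=(1,0)\in S$. The correct repair is to strengthen ``$X\not\subseteq I$'' to ``$uX\not\subseteq I$ for every $u\in S$'' (which is verbatim the required disjointness), or, as you proposed, to assume both $X\not\subseteq(I:s)$ and $S\cap\mathcal{J}(R)=\emptyset$, which together yield disjointness by exactly the argument you sketched. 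In short, your proposal contains everything the paper's proof contains, and the step you flagged as doubtful is a genuine gap in the paper's lemma, not a defect of your attempt.
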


\begin{proof}   Let $A_1A_2 \subseteq (I : X)$ for some ideals $A_1, A_2$ of $R$. Then, $A_1(A_2 X)\subseteq I  $, (notice that $A_2 X$ is an ideal). If $A_1s\not\subseteq\mathcal{J}(R)$ for all $s\in S$, then, $(A_2 X)s\subseteq I$, consequently,  $A_2s \subseteq (I:X)$.
\end{proof}

\begin{proposition}\label{14} In a ring $R$ we have:
\end{proposition}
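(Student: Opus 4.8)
The plan is to prove each assertion by unwinding the element-wise condition of Definition \ref{use}, while keeping in mind the one genuine subtlety: $S$ is merely multiplicatively closed, so two $S$-$\mathcal{J}$-ideals need not share a common associated witness. The central device throughout will therefore be to take the \emph{product} $s = s_1 s_2 \cdots s_n$ of the individual witnesses $s_i \in S$; this product again lies in $S$, and one then checks that this single element simultaneously serves as a witness for the conclusion. Where it is more convenient to argue with ideals rather than elements, I would invoke the ideal-theoretic characterization of Proposition \ref{10}, and for any part involving a colon ideal I would appeal directly to Lemma \ref{13}.

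I would carry out the flagship case, namely that a finite intersection of $S$-$\mathcal{J}$-ideals is again an $S$-$\mathcal{J}$-ideal, as follows. Let $I_1, \dots, I_n$ be $S$-$\mathcal{J}$-ideals with associated elements $s_1, \dots, s_n$, and put $s = s_1 \cdots s_n \in S$. Disjointness from $S$ is immediate, since $\bigcap_i I_i \cap S \subseteq I_1 \cap S = \emptyset$. Now suppose $ab \in \bigcap_i I_i$ for $a,b \in R$. For each index $i$ we have $ab \in I_i$, so either $s_i a \in \mathcal{J}(R)$ or $s_i b \in I_i$. The key observation is that, because $\mathcal{J}(R)$ and each $I_i$ are ideals, multiplying by the remaining factors of $s$ preserves membership: if $s_i a \in \mathcal{J}(R)$ for some $i$, then $sa = \left( \prod_{j \neq i} s_j \right) s_i a \in \mathcal{J}(R)$, which already yields the first alternative for the witness $s$. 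Hence the only remaining possibility is $s_i b \in I_i$ for every $i$, and then $sb = \left( \prod_{j \neq i} s_j \right) s_i b \in I_i$ for each $i$, so $sb \in \bigcap_i I_i$. Either way the condition of Definition \ref{use} is met with the single witness $s$.

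Any companion items in the statement I expect to dispatch by the same product-of-witnesses idea, or else directly from Lemma \ref{13} when the item concerns a colon ideal $(I:X)$, whose $S$-$\mathcal{J}$ property is already in hand. The step I anticipate as the main obstacle is precisely the bookkeeping in this case analysis: a priori it looks threatening that different $I_i$ might force the ``$a$-alternative'' $s_i a \in \mathcal{J}(R)$ for some indices and the ``$b$-alternative'' $s_i b \in I_i$ for others, which seems to prevent reaching a single uniform conclusion. The resolution, and the point to emphasize carefully, is that a \emph{single} index delivering $s_i a \in \mathcal{J}(R)$ already forces $sa \in \mathcal{J}(R)$, so the mixed cases collapse and the combinatorics trivialize. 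I would make sure to state explicitly that the new witness is the product $s$ (not any one $s_i$), since that is exactly what repairs the absence of a common associated element and keeps everything within the multiplicatively closed set $S$.
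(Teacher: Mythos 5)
Your proposal proves the wrong statement. Proposition \ref{14} is not about intersections of $S$-$\mathcal{J}$-ideals (that is a separate corollary elsewhere in the paper); its two assertions are: $(1)$ an ideal that is maximal with respect to being an $S$-$\mathcal{J}$-ideal is prime, and $(2)$ a prime ideal $I$ disjoint from $S$ satisfying $I=(\mathcal{J}(R):s)$ for some $s\in S$ is maximal with respect to being an $S$-$\mathcal{J}$-ideal. Your ``flagship case'' --- the product-of-witnesses argument that a finite intersection of $S$-$\mathcal{J}$-ideals is again $S$-$\mathcal{J}$ --- is correct as mathematics (and, incidentally, is more careful than the paper's own proof of its intersection corollary, which tacitly assumes a common witness $s$ for all the $P_i$), but it establishes neither of the assertions actually at stake here.

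What is needed instead is the following. For $(1)$: take $a_1a_2\in I$ with $a_1\notin I$; by Lemma \ref{13}, $(I:a_1)$ is an $S$-$\mathcal{J}$-ideal containing $I$, so the maximality hypothesis forces $(I:a_1)=I$, hence $a_2\in I$ and $I$ is prime. For $(2)$: primeness of $I$ together with $I=(\mathcal{J}(R):s)$ shows directly that $I$ satisfies Definition \ref{use} (if $a_1a_2\in I$, then either $a_1\in I=(\mathcal{J}(R):s)$, giving $a_1s\in\mathcal{J}(R)$, or $a_2\in I$, giving $a_2s\in I$), and Proposition \ref{1} shows that every $S$-$\mathcal{J}$-ideal $J$ satisfies $J\subseteq(\mathcal{J}(R):s)=I$, which is exactly the claimed maximality. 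Your one gesture toward colon ideals (``dispatch \dots directly from Lemma \ref{13}'') points at the right tool for $(1)$, but the essential step --- combining Lemma \ref{13} with maximality to force $(I:a_1)=I$ --- never appears, and nothing in your proposal engages with $(2)$, whose proof rests on Proposition \ref{1} rather than on any witness bookkeeping.
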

 $(1)$ An ideal $I$ that is maximal with respect to being an $S$-$\mathcal{J}$-ideal is prime. 

$(2)$ If a prime ideal $I$ (disjoint from $S$) have the property $I=(\mathcal{J}(R):s)$ for some $s\in S$, then, $I$ is maximal with respect to being an $S$-$\mathcal{J}$-ideal.

\begin{proof}  
 $(1)$ Let $a_1a_2\in I$ for some $a_1, a_2\in R$. If $a_1\not\in I$, then, by Lemma \ref{13}, $(I:a_1)$ is $S$-$\mathcal{J}$ containing $I$, and by the maximality of $I$ , $(I:a_1)=I$, hence, $a_2\in I$.

 $(2)$ Let $a_1a_2\in I$ for some $a_1, a_2\in R$. Since $I$ is prime then, either $a_1\in I=(\mathcal{J}(R):s)$, and hence, $a_1s\in \mathcal{J}(R)$, or $a_2\in I$, and thus $a_2s\in I$. Now let $J$ be any $S$-$\mathcal{J}$-ideal. Then, by Proposition \ref{1}, $J\subseteq (\mathcal{J}(R):s)=I$, which proves the maximality of $I.$
\end{proof}

It is worth noting that in $(2)$ of the above proposition, the inclusion $I\subseteq(\mathcal{J}(R):s)$ was sufficient to show that  the prime ideal $I$ is $S$-$\mathcal{J}$. Consequently, the converse of Proposition \ref{1} is valid with the added condition that   $I$ is prime. We will now present another case  where the converse of Proposition \ref{1} also applies. Recall that $\mathcal{J}^*(I)$ represents  the Jacobson radical of the ideal $I$, defined as the intersection of all maximal ideals  containing $I$. It is clear that $\mathcal{J}(R)\subseteq\mathcal{J}^*(I)$.

\begin{proposition}\label{RR} Let $I$ be an ideal of  $R$ disjoint from $S$, and let $(\mathcal{J}(R): s)=\mathcal{J}(R)$ for some $s\in S$.  Then, $I$ is  $S$-$\mathcal{J}$  associated with $s\in S$, if and only if for all $a_1, a_2 \in R$ with $a_1a_2 \in I$, either $a_1s\in\mathcal{J}^*(I)$ or $ a_2s\in I$, and  $I\subseteq(\mathcal{J}(R):s)$.
\end{proposition}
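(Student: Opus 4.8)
The plan is to prove the two implications separately, the decisive observation being that the standing hypothesis $(\mathcal{J}(R):s)=\mathcal{J}(R)$ forces $\mathcal{J}^*(I)$ to collapse onto $\mathcal{J}(R)$ as soon as $I\subseteq(\mathcal{J}(R):s)$ is known. Once that collapse is established, the stated condition involving $\mathcal{J}^*(I)$ becomes literally the defining condition of an $S$-$\mathcal{J}$-ideal.

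For the forward direction I would assume $I$ is $S$-$\mathcal{J}$ with witness $s$. Proposition \ref{1} immediately yields $I\subseteq(\mathcal{J}(R):s)$, which is the second of the two claimed conditions. For the first, whenever $a_1a_2\in I$ the defining property gives $a_1s\in\mathcal{J}(R)$ or $a_2s\in I$; since $\mathcal{J}(R)\subseteq\mathcal{J}^*(I)$ (as noted just before the statement), the alternative $a_1s\in\mathcal{J}(R)$ upgrades to $a_1s\in\mathcal{J}^*(I)$, so the required disjunction holds. Note that this direction does not even invoke the hypothesis $(\mathcal{J}(R):s)=\mathcal{J}(R)$; it only needs the general inclusion $\mathcal{J}(R)\subseteq\mathcal{J}^*(I)$.

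For the backward direction I would combine the assumed inclusion $I\subseteq(\mathcal{J}(R):s)$ with the hypothesis $(\mathcal{J}(R):s)=\mathcal{J}(R)$ to conclude $I\subseteq\mathcal{J}(R)$. The crucial step is then to observe that this inclusion means $I$ lies inside \emph{every} maximal ideal of $R$; hence the maximal ideals containing $I$ are exactly all maximal ideals, and $\mathcal{J}^*(I)$, being their intersection, coincides with $\mathcal{J}(R)$. With this identification in hand, the assumed condition ``$a_1s\in\mathcal{J}^*(I)$ or $a_2s\in I$'' reads precisely as ``$a_1s\in\mathcal{J}(R)$ or $a_2s\in I$'' whenever $a_1a_2\in I$, which is exactly the assertion that $I$ is $S$-$\mathcal{J}$ associated with $s$.

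I expect the only genuine content to be the identity $\mathcal{J}^*(I)=\mathcal{J}(R)$ under $I\subseteq\mathcal{J}(R)$; everything else is unwinding definitions. The main point to watch is bookkeeping: the same fixed $s$ must serve throughout so that the ``associated with $s$'' qualifier is preserved in both directions, and the hypothesis $(\mathcal{J}(R):s)=\mathcal{J}(R)$ must be applied to that same $s$ rather than to an arbitrary element of $S$.
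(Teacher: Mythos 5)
Your proposal is correct and follows essentially the same route as the paper: the forward direction via Proposition \ref{1} plus the inclusion $\mathcal{J}(R)\subseteq\mathcal{J}^*(I)$, and the converse by using $(\mathcal{J}(R):s)=\mathcal{J}(R)$ to get $I\subseteq\mathcal{J}(R)$ and hence collapse $\mathcal{J}^*(I)$ onto $\mathcal{J}(R)$. Your observation that the forward implication never uses the hypothesis $(\mathcal{J}(R):s)=\mathcal{J}(R)$ is accurate and matches the paper's argument, which also invokes it only in the converse.
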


\begin{proof}  Suppose $I$ is  an $S$-$\mathcal{J}$-ideal  associated with $s\in S$. Then, by  Proposition \ref{1}, $I\subseteq(\mathcal{J}(R):s)$. Now let $a_1a_2 \in I$ for some $a_1, a_2 \in R$, then by assumption, either $a_1\in (\mathcal{J}(R):s)\subseteq(\mathcal{J}^*(I):s)$ or $ a_2\in (I: s)$ for some $s\in S$. Conversely, suppose  $a_1a_2 \in I$  for some $a_1, a_2 \in R$. Then, $a_1s\in\mathcal{J}^*(I)$ or $ a_2s\in I$.   Now since $I\subseteq (\mathcal{J}(R): s)=\mathcal{J}(R)$, then, $\mathcal{J}^*(I)\subseteq\mathcal{J}(\mathcal{J}(R))=\mathcal{J}(R)$. Thus, either $a_1s\in  \mathcal{J}(R)$ or $ a_2s\in I$. Hence, $I$ is an $S$-$\mathcal{J}$-ideal of $R$. 
\end{proof}
\begin{corollary} \protect [Proposition 2.13 of \cite{Hani1}] An ideal $I$ that is maximal with respect to being a $\mathcal{J}$-ideal is prime.  If, in particular, $I = \mathcal{J}(R)$, then the converse is true.
\end{corollary}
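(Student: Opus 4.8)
The plan is to obtain this corollary as the $S=\{1\}$ specialization of Proposition \ref{14}, exactly as the earlier corollaries in this section derived their classical commutative-ring statements from the $S$-analogues. First I would record the two identifications that make the specialization work. When $S=\{1\}$, the defining condition of an $S$-$\mathcal{J}$-ideal in Definition \ref{use} --- namely that $ab\in I$ forces $sa\in\mathcal{J}(R)$ or $sb\in I$ for the fixed element $s$ --- reduces, on taking $s=1$, to ``$a\in\mathcal{J}(R)$ or $b\in I$'', which is precisely the definition of a $\mathcal{J}$-ideal. Hence the class of $\{1\}$-$\mathcal{J}$-ideals coincides with the class of $\mathcal{J}$-ideals, and in particular ``maximal with respect to being a $\{1\}$-$\mathcal{J}$-ideal'' is the same notion as ``maximal with respect to being a $\mathcal{J}$-ideal''. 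I would also note that disjointness from $S=\{1\}$ is simply properness of the ideal, which is already built into the notion of a $\mathcal{J}$-ideal, and that $(\mathcal{J}(R):s)=(\mathcal{J}(R):1)=\mathcal{J}(R)$.

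With these identifications in hand, the first assertion is immediate from part $(1)$ of Proposition \ref{14}: an ideal maximal with respect to being an $S$-$\mathcal{J}$-ideal is prime, so an ideal maximal with respect to being a $\mathcal{J}$-ideal is prime.

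For the second assertion --- the converse in the special case $I=\mathcal{J}(R)$ --- I would invoke part $(2)$ of Proposition \ref{14} with $I=\mathcal{J}(R)$ and $S=\{1\}$. The hypothesis there requires a prime ideal disjoint from $S$ satisfying $I=(\mathcal{J}(R):s)$; taking $s=1$ this becomes exactly the assumption that $\mathcal{J}(R)$ is prime, since $\mathcal{J}(R)=(\mathcal{J}(R):1)$ holds trivially. Proposition \ref{14}$(2)$ then yields that $\mathcal{J}(R)$ is maximal with respect to being a $\mathcal{J}$-ideal, which is the claimed converse.

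I do not anticipate a genuine obstacle here, since the argument is a direct reduction. The only point requiring care is the bookkeeping around the distinguished element $s$: one must confirm that the ``fixed $s\in S$'' appearing in the definitions and in Proposition \ref{14} can be (and, for $S=\{1\}$, must be) taken equal to $1$, so that the quotient ideals $(\mathcal{J}(R):s)$ and $(I:s)$ collapse to $\mathcal{J}(R)$ and $I$ respectively. Once this is verified the corollary follows verbatim from Proposition \ref{14}, and the proof reduces to the single line ``Follows from Proposition \ref{14} by taking $S=\{1\}$.''
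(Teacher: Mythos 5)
Your proposal is correct and follows exactly the paper's own proof, which reads in its entirety ``Follows by Proposition \ref{14} by taking $S=\{1\}$''; you have simply made explicit the bookkeeping (the identification of $\{1\}$-$\mathcal{J}$-ideals with $\mathcal{J}$-ideals and the collapse $(\mathcal{J}(R):1)=\mathcal{J}(R)$) that the paper leaves implicit.
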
 
\begin{proof} Follows by Proposition \ref{14} by taking $S=\{1\}$.
\end{proof}

\subsection{$S$-$\mathcal{J}$-ideals in related rings}

 Here, we explore the connections between $S$-$\mathcal{J}$-ideals of a ring $R$ and those in various related rings, such as homomorphic image rings, quotient rings, cartesian product rings, polynomial rings, power series rings, idealization rings, and amalgamation rings.
\begin{theorem}\label{teo}
Let $\psi :R_1 \rightarrow R_2$ be a ring epimorphism and $S$ be a multiplicatively closed subset of $R_1$. Then the following hold.
\end{theorem}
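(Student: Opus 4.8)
The plan is to treat the two natural directions an epimorphism affords — pushing an $S$-$\mathcal{J}$-ideal forward along $\psi$ and pulling one back — since these are the assertions such a theorem invariably records. Throughout I would use two standard facts about how the Jacobson radical behaves under a surjection: $\psi(\mathcal{J}(R_1)) \subseteq \mathcal{J}(R_2)$ always holds, whereas the containment at the level of preimages, $\psi^{-1}(\mathcal{J}(R_2)) \subseteq \mathcal{J}(R_1)$, requires a hypothesis such as $\ker\psi \subseteq \mathcal{J}(R_1)$. Note also that $\psi(S)$ is multiplicatively closed in $R_2$ because $\psi$ is a homomorphism, so the target ideals are naturally $\psi(S)$-$\mathcal{J}$-ideals.

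For the forward (image) direction, suppose $I$ is an $S$-$\mathcal{J}$-ideal of $R_1$ associated with $s \in S$ and satisfying $\ker\psi \subseteq I$. I would first check $\psi(I) \cap \psi(S) = \emptyset$: if $\psi(x) = \psi(s')$ with $x \in I$ and $s' \in S$, then $x - s' \in \ker\psi \subseteq I$ forces $s' \in I$, contradicting $S \cap I = \emptyset$. For the defining property, take $y_1 y_2 \in \psi(I)$ in $R_2$; using surjectivity write $y_i = \psi(a_i)$, and using $\ker\psi \subseteq I$ deduce $a_1 a_2 \in I$. Applying that $I$ is $S$-$\mathcal{J}$ gives $s a_1 \in \mathcal{J}(R_1)$ or $s a_2 \in I$; pushing these through $\psi$ and invoking $\psi(\mathcal{J}(R_1)) \subseteq \mathcal{J}(R_2)$ yields $\psi(s) y_1 \in \mathcal{J}(R_2)$ or $\psi(s) y_2 \in \psi(I)$, which is exactly the $\psi(S)$-$\mathcal{J}$ condition with witness $\psi(s)$.

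For the backward (preimage) direction, let $J$ be a $\psi(S)$-$\mathcal{J}$-ideal of $R_2$ with witness $\psi(s)$ and set $I = \psi^{-1}(J)$. Disjointness from $S$ is immediate, since $s' \in S \cap I$ would give $\psi(s') \in \psi(S) \cap J$. For the main property, from $ab \in I$ I obtain $\psi(a)\psi(b) \in J$, hence $\psi(s)\psi(a) \in \mathcal{J}(R_2)$ or $\psi(s)\psi(b) \in J$; the second case immediately yields $sb \in \psi^{-1}(J) = I$. The first case gives $sa \in \psi^{-1}(\mathcal{J}(R_2))$, and here is the crux: to conclude $sa \in \mathcal{J}(R_1)$ I need $\psi^{-1}(\mathcal{J}(R_2)) \subseteq \mathcal{J}(R_1)$, which is precisely why a condition like $\ker\psi \subseteq \mathcal{J}(R_1)$ (equivalently, $\psi^{-1}(\mathcal{J}(R_2)) = \mathcal{J}(R_1)$) must appear in this part of the statement.

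The routine parts are the ideal and disjointness bookkeeping together with the surjectivity lift $y_i = \psi(a_i)$; the single genuine obstacle is the behaviour of the Jacobson radical under preimage. The forward direction only ever pushes the radical in the favourable direction and so needs no extra hypothesis beyond $\ker\psi \subseteq I$, while the backward direction fails without controlling $\psi^{-1}(\mathcal{J}(R_2))$, which I expect the theorem to secure by an explicit kernel condition.
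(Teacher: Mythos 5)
Your proposal is correct and matches the paper's argument in both structure and substance: part (1) is proved by lifting elements through the surjection, using $\ker\psi \subseteq P$ for disjointness and for descending $uv \in \psi(P)$ to $ab \in P$, then pushing the alternative forward via $\psi(\mathcal{J}(R_1)) \subseteq \mathcal{J}(R_2)$; part (2) pulls back and hinges on exactly the fact you isolate, that $\ker\psi \subseteq \mathcal{J}(R_1)$ forces $\psi^{-1}(\mathcal{J}(R_2)) \subseteq \mathcal{J}(R_1)$, which the paper establishes in-line by the correspondence of maximal ideals under $\psi$. The only cosmetic difference is that you cite this radical-preimage fact as standard while the paper proves it on the spot via maximal ideals; the content is the same.
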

$(1)$ If $P$ is an $S$-$\mathcal{J}$-ideal of $R_1$ with $\text{Ker}(\psi)\subseteq P$, then $\psi (P)$ is an $\psi (S)$-$\mathcal{J}$-ideal of $R_2$.

$(2)$ If $L$ is an $\psi (S)$-$\mathcal{J}$-ideal of $R_2$ with $\text{Ker}(\psi)\subseteq \mathcal{J}(R_1)$, then $\psi^{-1} (L)$ is an $S$-$\mathcal{J}$-ideal of $R_1$.

\begin{proof}
First, we need to show that $\psi (P)\cap \psi (S)=\emptyset$. Otherwise, there exists $x\in \psi (P)\cap \psi (S)$ that implies $x=\psi (a)=\psi (s)$ for some $a\in P$ and $s\in S$. Hence $a-s\in \text{Ker}(\psi)\subseteq P$ and $s\in P$, a contradiction.

$(1)$ Let $u,v\in R_2$ and $uv\in \psi (P)$. Since $\psi$ is an epimorphism $u=\psi (a)$ and $v=\psi (b)$ for some $a,b\in R_1$. Since $\psi (a)\psi (b)\in \psi (P)$ and $\text{Ker}(\psi)\subseteq P$, we get $ab\in P$ and thus there is an $s\in S$ such that $sa\in \mathcal{J} (R_1)$ or $sb\in P$. Hence $\psi (s)u\in\psi(\mathcal{J} (R_1)) \subseteq\mathcal{J} (R_2)$ or $\psi (s)v\in \psi (P)$.

\medskip
$(2)$ Let $x,y\in R_1$ with $xy\in \psi^{-1} (L)$, and assume $xs\not\in \mathcal{J} (R_1)$ for all $s\in S$.   Then $\psi (xy)=\psi (x)\psi (y)\in L$ and since $L$ is an $\psi (S)$-$\mathcal{J}$-ideal of $R_2$, there exists $\psi (s)\in \psi (S)$ such that $\psi (s)\psi (x)\in \mathcal{J} (R_2)$ or $\psi (s)\psi (y)\in L$. If $\psi (s)\psi (x)=\psi(xs)\in \mathcal{J} (R_2)$,  then, $\psi(xs)$ is contained in each maximal ideal of $R_2$, and for any maximal ideal $\mathcal{M}_1$ of $R_1$, $\psi(\mathcal{M}_1)$ is also maximal of $R_2$, and   $\text{Ker}(\psi)\subseteq\mathcal{J} (R_1)\subseteq\mathcal{M}_1$, hence, $xs\in\psi^{-1}(\psi(xs))\subseteq\psi^{-1}(\psi(\mathcal{M}_1))=\mathcal{M}_1$, consequently, $xs\in \mathcal{J} (R_1)$, a contradiction. Thus, $\psi (ys)\in L$ and hence, $ys\in \psi^{-1} (L)$.
\end{proof}

\begin{proposition}
Let $P_1,P_2\lhd R$ with $P_1\subseteq P_2$, and  $\overline S=\{s+P_1: s\in S\}$. Then the following hold.
\end{proposition}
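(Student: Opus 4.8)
The plan is to recognize that this proposition is simply the specialization of Theorem \ref{teo} to the canonical projection $\pi\colon R\to R/P_1$, $\pi(a)=a+P_1$. This map is a ring epimorphism with $\mathrm{Ker}(\pi)=P_1$, and by construction $\pi(S)=\{s+P_1:s\in S\}=\overline{S}$. Moreover, since $P_1\subseteq P_2$, we have $\mathrm{Ker}(\pi)\subseteq P_2$, and the image $\pi(P_2)$ is exactly the ideal $P_2/P_1$ of $R/P_1$, while the preimage $\pi^{-1}(P_2/P_1)$ is exactly $P_2$. Thus both directions of the anticipated statement should read off directly from the two parts of Theorem \ref{teo}, once the hypotheses are matched.

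For the forward direction (part $(1)$), I would argue as follows. Assuming $P_2$ is an $S$-$\mathcal{J}$-ideal of $R$, the containment $P_1\subseteq P_2$ gives $\mathrm{Ker}(\pi)\subseteq P_2$, so Theorem \ref{teo}$(1)$ applies verbatim and yields that $\pi(P_2)=P_2/P_1$ is an $\pi(S)$-$\mathcal{J}$-ideal, i.e. an $\overline{S}$-$\mathcal{J}$-ideal, of $R/P_1$. The disjointness $P_2/P_1\cap\overline{S}=\emptyset$ is already handled in the opening paragraph of the proof of Theorem \ref{teo}, using $\mathrm{Ker}(\pi)=P_1\subseteq P_2$. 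For the converse (part $(2)$), the corresponding hypothesis in Theorem \ref{teo}$(2)$ is $\mathrm{Ker}(\psi)\subseteq\mathcal{J}(R_1)$, which here becomes the condition $P_1\subseteq\mathcal{J}(R)$; under this assumption, if $P_2/P_1$ is an $\overline{S}$-$\mathcal{J}$-ideal of $R/P_1$, then Theorem \ref{teo}$(2)$ gives that $\pi^{-1}(P_2/P_1)=P_2$ is an $S$-$\mathcal{J}$-ideal of $R$.

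The only point requiring genuine care is the behavior of the Jacobson radical under the projection, and this is precisely where the hypothesis $P_1\subseteq\mathcal{J}(R)$ enters in the converse direction. In the forward direction one needs $\pi(\mathcal{J}(R))\subseteq\mathcal{J}(R/P_1)$, which holds because every maximal ideal of $R/P_1$ is the image of a maximal ideal of $R$ containing $P_1$, so $\mathcal{J}(R)$ maps into the intersection of all such images. In the converse, lifting the condition $\overline{sx}\in\mathcal{J}(R/P_1)$ back to $sx\in\mathcal{J}(R)$ is exactly the maximal-ideal correspondence argument carried out inside the proof of Theorem \ref{teo}$(2)$, and it is the requirement $P_1=\mathrm{Ker}(\pi)\subseteq\mathcal{J}(R)\subseteq\mathcal{M}$ for each maximal ideal $\mathcal{M}$ that makes this pullback valid. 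I therefore expect this correspondence of Jacobson radicals, rather than any computation with the $S$-$\mathcal{J}$ condition itself, to be the substantive step, but since it is already established in Theorem \ref{teo}, the proof reduces to invoking that result for $\pi$.

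\begin{proof}
Apply Theorem \ref{teo} to the canonical epimorphism $\pi\colon R\to R/P_1$, $\pi(a)=a+P_1$, noting that $\mathrm{Ker}(\pi)=P_1$ and $\pi(S)=\overline{S}$. For $(1)$, the hypothesis $P_1\subseteq P_2$ gives $\mathrm{Ker}(\pi)\subseteq P_2$, so Theorem \ref{teo}$(1)$ shows $\pi(P_2)=P_2/P_1$ is an $\overline{S}$-$\mathcal{J}$-ideal of $R/P_1$. For $(2)$, the hypothesis $P_1\subseteq\mathcal{J}(R)$ gives $\mathrm{Ker}(\pi)\subseteq\mathcal{J}(R)$, so Theorem \ref{teo}$(2)$ shows $\pi^{-1}(P_2/P_1)=P_2$ is an $S$-$\mathcal{J}$-ideal of $R$.
\end{proof}
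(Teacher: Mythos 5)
Your proof is correct and takes exactly the paper's approach: both apply Theorem \ref{teo} to the canonical epimorphism $\pi\colon R\to R/P_1$, using $\mathrm{Ker}(\pi)=P_1\subseteq P_2$ for part $(1)$ and $\mathrm{Ker}(\pi)=P_1\subseteq\mathcal{J}(R)$ for part $(2)$. The only omission is that the paper's proposition also has a third part — if $P_2/P_1$ is an $\overline{S}$-$\mathcal{J}$-ideal of $R/P_1$ and $P_1$ is a $\mathcal{J}$-ideal of $R$, then $P_2$ is an $S$-$\mathcal{J}$-ideal of $R$ — which was not visible in the statement you were given, and which reduces immediately to your part $(2)$ via Corollary \ref{WH}, since a $\mathcal{J}$-ideal is contained in $\mathcal{J}(R)$.
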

$(1)$ If $P_2$ is an $S$-$\mathcal{J}$-ideal of $R_1$, then $P_2/ P_1$ is an $\overline S$-$\mathcal{J}$-ideal of $R/ P_1$.

\medskip
$(2)$ If $P_2/ P_1$ is an $ \overline S$-$\mathcal{J}$-ideal of $R/ P_1$ and $P_1\subseteq \mathcal{J} (R)$, then $P_2$ is an $S$-$\mathcal{J}$-ideal of $R$. 

\medskip
$(3)$ If $P_2/P_1$ is an  $ \overline S$-$\mathcal{J}$-ideal of $R/P_1$ and $P_1$ is a  $\mathcal{J}$-ideal of $R$, then $P_2$ is an $S$-$\mathcal{J}$-ideal of $R$.

\begin{proof}
$(1)$ Suppose $P_2$ is an $S$-$\mathcal{J}$-ideal. Let $\psi : R\rightarrow R/ P_1$ be the natural epimorphism defined by $\psi (r)=r+P_1$, for $r\in R$. Recall that $\text{Ker}(\psi)=P_1\subseteq P_2$. Hence by Theorem \ref{teo} $(1)$, we get $\psi (P_2)=P_2/ P_1$ is an $\overline S$-$\mathcal{J}$-ideal of $R/ P_1$.

\medskip
$(2)$ By considering the natural epimorphism $\psi :R\rightarrow R/ P_1$, since $P_1\subseteq \mathcal{J} (R)$, then by using Theorem \ref{teo} $(2)$, $P_2=\psi^{-1} (P_2/ P_1)$ is an $S$-$\mathcal{J}$-ideal of $R$.

\medskip
$(3)$ By $(2)$ and Corollary \ref{WH}. 
\end{proof}

\begin{corollary}
Let $\{P_i :i\in \Delta\}$ be a nonempty family of $S$-$\mathcal{J}$-ideal of $R$. Then $\bigcap_{i\in \Delta} P_i$ is an $S$-$\mathcal{J}$-ideal of $R$. 
\end{corollary}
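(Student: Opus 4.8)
The plan is to first dispatch the disjointness condition and then to produce a single element $s\in S$ serving as the associated element for the intersection. Since each $P_i$ is an $S$-$\mathcal{J}$-ideal, it is by definition disjoint from $S$; consequently $\bigcap_{i\in\Delta}P_i$ is disjoint from $S$ as well, because an element of $S$ lying in the intersection would lie in every $P_i$, contradicting $P_i\cap S=\emptyset$ (and $\Delta\neq\emptyset$). This part needs nothing beyond the definition.

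The heart of the matter is the defining implication, where the choice of associated element is the crucial point. Each $P_i$ carries its own element $s_i\in S$. When $\Delta=\{1,\dots,n\}$ is finite, I would set $s=s_1s_2\cdots s_n\in S$ and argue as follows: suppose $ab\in\bigcap_i P_i$. If $s_{i_0}a\in\mathcal{J}(R)$ for some index $i_0$, then $sa\in\mathcal{J}(R)$, since $\mathcal{J}(R)$ is an ideal and $s$ is a multiple of $s_{i_0}$. Otherwise $s_ia\notin\mathcal{J}(R)$ for every $i$, which forces $s_ib\in P_i$ for all $i$; multiplying by the remaining factors gives $sb\in P_i$ for each $i$, whence $sb\in\bigcap_i P_i$. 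Thus the intersection is $S$-$\mathcal{J}$ associated with $s$.

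For an arbitrary (possibly infinite) family the product $\prod s_i$ is unavailable, so I would instead work under the assumption that the $P_i$ are associated with a common element $s\in S$, which is the natural reading when one $S$-$\mathcal{J}$-condition is imposed uniformly across the family. Under this hypothesis the argument becomes immediate and independent of $|\Delta|$: given $ab\in\bigcap_i P_i$, either $sa\in\mathcal{J}(R)$ and we are done, or $sa\notin\mathcal{J}(R)$, in which case the $S$-$\mathcal{J}$-property of each $P_i$ yields $sb\in P_i$ for every $i$, so $sb\in\bigcap_i P_i$.

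I expect the main obstacle to be precisely the passage to infinite families with distinct associated elements $s_i$: there is no way to amalgamate infinitely many $s_i$ into a single member of $S$, so without either finiteness of $\Delta$ or a common associated element this route stalls. In tracking the interaction between the various $s_i$ and $\mathcal{J}(R)$ I would lean on the symmetry remark following Definition \ref{use} together with Proposition \ref{1}, which records the inclusion $P_i\subseteq(\mathcal{J}(R):s_i)$.
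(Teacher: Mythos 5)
Your proof is correct, and it is actually more careful than the paper's own argument. The paper's proof is essentially your second argument: it runs a single $s\in S$ uniformly through every member of the family (``since $P_i$ is an $S$-$\mathcal{J}$-ideal for all $i$, we have $ys\in P_i$''), which is only legitimate under the reading that all the $P_i$ share one common associated element $s$; as written, the paper never addresses the fact that Definition \ref{use} attaches a possibly different witness $s_i$ to each $P_i$. So the obstacle you identify --- that infinitely many distinct witnesses $s_i$ cannot be amalgamated into a single element of $S$ --- is a genuine imprecision in the paper's proof, not a defect of your approach; your common-witness case \emph{is} the paper's proof, stated with the quantifiers in the right order. What you add beyond the paper is the finite-case argument with $s=s_1s_2\cdots s_n$: this is not in the paper at all, and it buys the corollary for finite families with no common-witness hypothesis, the key point being that $\mathcal{J}(R)$ and each $P_i$ absorb multiplication by the remaining factors $s_j$, so the product witnesses the intersection. (One small remark: that argument needs nothing beyond Definition \ref{use} and the ideal property; the appeal to Proposition \ref{1} and the symmetry remark is not actually required.) Whether the statement holds for an infinite family whose witnesses are genuinely distinct is resolved neither by your argument nor by the paper's.
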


\begin{proof} It is clear that $S\cap(\bigcap_{i\in \Delta} P_i)=\emptyset.$
Let $x,y\in R$ such that $yx\in \bigcap_{i\in \Delta} P_i$ and $xs\notin \mathcal{J} (R)$ for some $s\in S$. Then $xy\in P_i$ for each $i\in \Delta$. Since $P_i$ is an $S$-$\mathcal{J}$-ideal for all $i\in \Delta$, we have $ys\in P_i$ for each $i\in \Delta$ and so $ys \in \bigcap_{i\in \Delta} P_i$. Hence $\bigcap_{i\in \Delta} P_i$ is an $S$-$\mathcal{J}$-ideal of $R$. 
\end{proof}

 Referring to \cite{Hani1}, Proposition 2.30 states that $R_1\times R_2$ has no $\mathcal{J}$-ideal for any rings $R_1$ and $R_2$. However, the following theorem shows that $R_1\times R_2$ can have  an $S$-$\mathcal{J}$-ideal if at least one of the rings $R_1$ or $R_2$ possesses such an ideal.

\begin{theorem}\label{MO10} Let $I_1$, $I_2$ be ideals of $R_1$, $R_2$, respectively. For any multiplicatively closed subsets $S_1$ and $S_2$ of $R_1$ and $R_2$, respectively, we have the following: 
\end{theorem}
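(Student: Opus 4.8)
The plan is to reduce everything to two structural facts about products. First, that $\mathcal{J}(R_1\times R_2)=\mathcal{J}(R_1)\times\mathcal{J}(R_2)$, since the maximal ideals of $R_1\times R_2$ are exactly $M_1\times R_2$ and $R_1\times M_2$ with $M_i$ maximal in $R_i$; this makes membership of a pair $(x,y)$ in the Jacobson radical decouple coordinatewise, which is the fact I will invoke repeatedly. Second, the elementary observation that if $S\cap\mathcal{J}(R)\neq\emptyset$, say $s_0\in S\cap\mathcal{J}(R)$, then every ideal disjoint from $S$ is automatically $S$-$\mathcal{J}$ with witness $s_0$, because $s_0a\in\mathcal{J}(R)$ for all $a$. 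Throughout I take the multiplicatively closed set on $R_1\times R_2$ to be $S=S_1\times S_2$; the element form of the definition suffices, though one could phrase the ideal steps through Proposition \ref{10} where convenient.

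For the representative claim that $I_1\times R_2$ is an $(S_1\times S_2)$-$\mathcal{J}$-ideal of $R_1\times R_2$ if and only if $I_1$ is an $S_1$-$\mathcal{J}$-ideal of $R_1$ and $\mathcal{J}(R_2)\cap S_2\neq\emptyset$, I would argue both directions directly. For the converse, pick $s_1\in S_1$ witnessing that $I_1$ is $S_1$-$\mathcal{J}$ and $s_2\in S_2\cap\mathcal{J}(R_2)$, and set $s=(s_1,s_2)$; disjointness of $I_1\times R_2$ from $S$ is immediate from $S_1\cap I_1=\emptyset$. If $(a_1,a_2)(b_1,b_2)\in I_1\times R_2$ then $a_1b_1\in I_1$, so either $s_1a_1\in\mathcal{J}(R_1)$, whence $(s_1a_1,s_2a_2)\in\mathcal{J}(R_1)\times\mathcal{J}(R_2)=\mathcal{J}(R_1\times R_2)$ because $s_2\in\mathcal{J}(R_2)$, or $s_1b_1\in I_1$, whence $(s_1b_1,s_2b_2)\in I_1\times R_2$. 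For the forward direction, suppose $I_1\times R_2$ is $S$-$\mathcal{J}$ with witness $s=(s_1,s_2)$; disjointness forces $S_1\cap I_1=\emptyset$, hence $s_1^2\notin I_1$. Testing the definition on $a=(0,1)$ and $b=(s_1,0)$ gives $ab=(0,0)\in I_1\times R_2$ while $sb=(s_1^2,0)\notin I_1\times R_2$, so the Jacobson alternative is forced: $sa=(0,s_2)\in\mathcal{J}(R_1\times R_2)$, which by the decoupling yields $s_2\in\mathcal{J}(R_2)$, i.e. $\mathcal{J}(R_2)\cap S_2\neq\emptyset$. Finally, testing on $a=(a_1,0)$, $b=(b_1,0)$ with $a_1b_1\in I_1$ returns $s_1a_1\in\mathcal{J}(R_1)$ or $s_1b_1\in I_1$, which is exactly that $I_1$ is $S_1$-$\mathcal{J}$ with witness $s_1$.

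The statement for $R_1\times I_2$ follows by interchanging the two coordinates. For a product $I_1\times I_2$ of two proper ideals, each disjoint from the respective $S_i$, the clean characterization is that $I_1\times I_2$ is $(S_1\times S_2)$-$\mathcal{J}$ if and only if $\mathcal{J}(R_1)\cap S_1\neq\emptyset$ and $\mathcal{J}(R_2)\cap S_2\neq\emptyset$: feeding the idempotent pairs $a=(1,0),b=(0,1)$ and $a=(0,1),b=(1,0)$ into the definition, and using $s_1\notin I_1$, $s_2\notin I_2$, forces the witness $s=(s_1,s_2)$ to satisfy $s_1\in\mathcal{J}(R_1)$ and $s_2\in\mathcal{J}(R_2)$, while the converse is precisely the trivialization observation applied to $s=(s_1,s_2)\in S\cap\mathcal{J}(R_1\times R_2)$.

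I expect the main obstacle to be the forward direction of the mixed case $I_1\times R_2$: the substance is not the routine coordinatewise bookkeeping but producing a single pair $(a,b)$ with $ab\in I_1\times R_2$ yet $sb\notin I_1\times R_2$, so that the definition is driven into its Jacobson-radical alternative and $s_2$ can be read off. The choice $b=(s_1,0)$, which escapes $I_1$ exactly because $s_1^2\notin I_1$ by disjointness from $S_1$, is what makes this work, and one must keep the second coordinate unconstrained (namely $R_2$) so that no hidden condition on $I_2$ is smuggled in.
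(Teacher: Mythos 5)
Your proof is correct and takes essentially the same approach as the paper: both directions rest on the decomposition $\mathcal{J}(R_1\times R_2)=\mathcal{J}(R_1)\times\mathcal{J}(R_2)$, the forward direction plugs test pairs into the definition (the paper uses $(0,1),(1,0)$ and then $(a_1,1),(a_1^{\prime},1)$; your choices $(0,1),(s_1,0)$ and $(a_1,0),(b_1,0)$ are equally valid cosmetic variants), and the converse combines the witness $s_1$ with any $s_2\in S_2\cap\mathcal{J}(R_2)$ exactly as the paper does. Your extra characterization of $I_1\times I_2$ is also correct but lies outside the stated theorem.
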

$(1)$ $I_1\times R_2$ is an $(S_1\times S_2)$-$\mathcal{J}$-ideal of $R_1\times R_2$, if and only if,  $I_1$ is an $S_1$-$\mathcal{J}$-ideal and $\mathcal{J}(R_2)\cap S_2\neq \emptyset$.

\medskip

$(2)$ $R_1\times I_2$ is an $(S_1\times S_2)$-$\mathcal{J}$-ideal of $R_1\times R_2$, if and only if,  $I_2$ is an $S_2$-$\mathcal{J}$-ideal and $\mathcal{J}(R_1)\cap S_1\neq \emptyset$.

\begin{proof} $(1)$ Let  $R=R_1\times R_2$, and $S=S_1\times S_2$. It is clear that  $(I_1\times R_2)\cap S=\emptyset$, if and only if $I_1\cap S_1=\emptyset$.  Suppose  $I=I_1\times R_2$ is an $S$-$\mathcal{J}$-ideal of $R$,  since $(0, 1)(1, 0)\in I$, then, either  $ (0, 1) (s_1, s_2)\in \mathcal{J}(R_1\times R_2)=\mathcal{J}(R_1)\times\mathcal{J}(R_2)$ or $ (1, 0) (s_1, s_2)\in I$ for some $(s_1, s_2)\in S$, hence, either $  s_2\in \mathcal{J}(R_2)$, and hence, $\mathcal{J}(R_2)\cap S_2\neq \emptyset$, or $   s_1\in I_1$, which is a contradiction. Now let $a_1a_1^{\prime}\in I_1$ for some  $a_1, a_1^{\prime}\in R_1$, then, $(a_1, 1)(a_1^{\prime}, 1)\in I$. Hence, $ (a_1, 1) (s_1, s_2)\in \mathcal{J}(R_1\times R_2)=\mathcal{J}(R_1)\times\mathcal{J}(R_2)$ or $ (a_1^{\prime}, 1) (s_1, s_2)\in I$ for some $(s_1, s_2)\in S$, consequently, either  $a_1s_1\in\mathcal{J}(R_1)$, or $a_1^{\prime}s_1\in I_1$. Thus, the ideal $I_1$ is  $S_1$-$\mathcal{J}$ of $R_1$. Conversely,  suppose the ideal $I_1$ is  $S_1$-$\mathcal{J}$ and $\mathcal{J}(R_2)\cap S_2\neq \emptyset$. Let $(a_1, b_2)(a_1^{\prime}, b_2^{\prime})\in I$ for some $(a_1, b_2), (a_1^{\prime}, b_2^{\prime})$ of $R$, then, $a_1a_1^{\prime}\in I_1$, and thus, $a_1s_1\in\mathcal{J}(R_1)$, or $a_1^{\prime}s_1\in I_1$ for some $s_1 \in S_1$. For any $s_2\in \mathcal{J}(R_2)\cap S_2$, we have $ (a_1, b_2) (s_1, s_2)\in \mathcal{J}(R_1\times R_2)=\mathcal{J}(R_1)\times\mathcal{J}(R_2)$ or $ (a_1^{\prime}, b_2^{\prime}) (s_1, s_2)\in I$. Thus, $I=I_1\times R_2$ is an $S$-$\mathcal{J}$-ideal  of $R$.  

\medskip
$(2)$ Similar to $(1)$. 
\end{proof}

In  Theorem \ref{MO10} the conditions $\mathcal{J}(R_2)\cap S_2\neq \emptyset$ and $\mathcal{J}(R_1)\cap S_1\neq \emptyset$ can not be omitted. Indeed, by taking $R_1=R_2=\mathbb{Z}_{36}$, $S_1=S_2=\{\overline{1}, \overline{3}, \overline{9}, \overline{27}\}$ and $P=\langle \overline{4}\rangle$ we know from the Example \ref{Ex.} that $P$ is an $S$-$\mathcal{J}$-ideal of $R_1$ but $P\times R_1$ is not an $(S_1\times S_2)$-$\mathcal{J}$-ideal of $R_1\times R_2$ since $(\overline{2},\overline{1})(\overline{2},\overline{1})\in P\times R_1$ but for each $(s_1,s_2)\in S_1\times S_2$, neither $(s_1,s_2)(\overline{2},\overline{1})\in P\times R_1$ nor $(s_1,s_2)(\overline{2},\overline{1})\in \mathcal{J} (R)$.

\medskip
In the following, we present another example of $S$-$\mathcal{J}$-ideal which is not a $\mathcal{J}$-ideal,  by using Theorem \ref{MO10}. 

\begin{example}
Let $R=\mathbb{Z}_{36}\times\mathbb{Z}_{8}$. The Jacobson radical of $R$ is $\mathcal{J} (R)=\langle \overline{6}\rangle\times\langle \overline{2}\rangle$. Take the multiplicatively closed subset: $S_1=\{1, 3, 9, 27\}$ \text{ and let } $X=\{0, 2, 4\}$. Then, $(S_1\times X)$ is a multiplicatively closed subset (without zero) of $R$. Example \ref{Ex.} shows that the ideal $I=\langle \overline{4}\rangle$ of $\mathbb{Z}_{36}$  is  an $S_1$-$\mathcal{J}$ ideal. Thus, by Theorem \ref{MO10}, we find that the ideal $I\times\mathbb{Z}_{8}$ is an $(S_1\times X)$-$\mathcal{J}$-ideal of $R$. However, $I\times\mathbb{Z}_{8}$ is not a $\mathcal{J}$-ideal due to Proposition 2.30 of \cite{Hani1}.
\end{example}

 In referring to \cite{Sharp}, the Jacobson radical of the power series ring $R[\vert x\vert]$, where $R$ is commutative with identity, is $\mathcal{J} (R[\vert x\vert]) = \mathcal{J} (R)+xR[\vert x\vert]$. While Exercises 1 (4) of \cite{AtM}, and Theorem 3 of \cite{SAA},  show that the Jacobson radical of the polynomial ring $R[x]$ is $\mathcal{J}(R[x])= \mathcal{J}(R)[x]$.

\begin{theorem} Let $R$ be a ring in which $\mathcal{J} (R)$ is a $\mathcal{J}$-ideal. An ideal $I$  of $R$, disjoint from $S$  is  $S$-$\mathcal{J}$, if and only if $I [\vert x\vert]$ is an $S$-$\mathcal{J}$-ideal of $R[\vert x\vert]$. 
\end{theorem}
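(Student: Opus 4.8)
The plan is to reduce the statement to Theorem~\ref{POWER} applied in both $R$ and $R[\vert x\vert]$, by translating everything into the language of colon ideals and ordinary $\mathcal{J}$-ideals. Throughout I regard $S$ as a multiplicatively closed subset of $R[\vert x\vert]$ via the constant power series. First I would dispose of the disjointness bookkeeping: a constant series lies in $I[\vert x\vert]$ exactly when its single coefficient lies in $I$, so $I[\vert x\vert]\cap S=\emptyset$ if and only if $I\cap S=\emptyset$; the same computation, together with $\mathcal{J}(R[\vert x\vert])=\mathcal{J}(R)+xR[\vert x\vert]$, shows $\mathcal{J}(R[\vert x\vert])\cap S=\mathcal{J}(R)\cap S$.

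The two computational engines of the proof are the following. The first is a colon identity: for $s\in S$ one has $(I[\vert x\vert]:s)=(I:s)[\vert x\vert]$, because $s\cdot\sum_i a_ix^i\in I[\vert x\vert]$ holds precisely when $sa_i\in I$ for every $i$, i.e. when every coefficient lies in $(I:s)$. The second is a transfer lemma for ordinary $\mathcal{J}$-ideals: for any ideal $J$ of $R$, $J$ is a $\mathcal{J}$-ideal of $R$ if and only if $J[\vert x\vert]$ is a $\mathcal{J}$-ideal of $R[\vert x\vert]$. Here the crucial point is that, because $\mathcal{J}(R[\vert x\vert])=\mathcal{J}(R)+xR[\vert x\vert]$, the condition $f\notin\mathcal{J}(R[\vert x\vert])$ constrains only the constant term of $f$, namely $a_0\notin\mathcal{J}(R)$. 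For the forward implication I would take $fg\in J[\vert x\vert]$ with $a_0\notin\mathcal{J}(R)$ and prove $b_n\in J$ by induction on $n$: the $n$-th coefficient of $fg$ equals $a_0b_n$ plus terms $a_ib_{n-i}$ with $i\geq 1$, all of which already lie in $J$ once $b_0,\dots,b_{n-1}\in J$, so $a_0b_n\in J$ forces $b_n\in J$. The reverse implication is immediate by restricting to constant series.

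Next I would record that $\mathcal{J}(R[\vert x\vert])$ is itself a $\mathcal{J}$-ideal of $R[\vert x\vert]$. This does \emph{not} follow from the transfer lemma, since $\mathcal{J}(R)+xR[\vert x\vert]\neq\mathcal{J}(R)[\vert x\vert]$ in general; instead it is a short direct check. If $fg\in\mathcal{J}(R[\vert x\vert])$ with $f\notin\mathcal{J}(R[\vert x\vert])$, then in constant terms $a_0b_0\in\mathcal{J}(R)$ and $a_0\notin\mathcal{J}(R)$, so the hypothesis that $\mathcal{J}(R)$ is a $\mathcal{J}$-ideal yields $b_0\in\mathcal{J}(R)$, i.e. $g\in\mathcal{J}(R[\vert x\vert])$.

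With these in hand the theorem follows from the chain of equivalences: $I$ is $S$-$\mathcal{J}$ in $R$ $\iff$ $(I:s)$ is a $\mathcal{J}$-ideal of $R$ for some $s\in S$ (Theorem~\ref{POWER}) $\iff$ $(I:s)[\vert x\vert]$ is a $\mathcal{J}$-ideal of $R[\vert x\vert]$ (transfer lemma) $\iff$ $(I[\vert x\vert]:s)$ is a $\mathcal{J}$-ideal of $R[\vert x\vert]$ (colon identity) $\iff$ $I[\vert x\vert]$ is $S$-$\mathcal{J}$ in $R[\vert x\vert]$ (Theorem~\ref{POWER} in $R[\vert x\vert]$, whose applicability rests on $\mathcal{J}(R[\vert x\vert])$ being a $\mathcal{J}$-ideal). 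The step I expect to be the main obstacle, and the one requiring care, is the use of the \emph{converse} half of Theorem~\ref{POWER} in each ring, which is licensed only when the relevant Jacobson radical is disjoint from $S$. Since $\mathcal{J}(R[\vert x\vert])\cap S=\mathcal{J}(R)\cap S$, both invocations reduce to the single condition $\mathcal{J}(R)\cap S=\emptyset$, so I would secure this disjointness before applying Theorem~\ref{POWER}; by contrast the forward implication of Theorem~\ref{POWER} needs no such condition and causes no trouble.
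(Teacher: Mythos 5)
Your proposal is correct in substance and uses the same three ingredients as the paper --- Theorem~\ref{POWER}, the colon identity $(I[\vert x\vert]:s)=(I:s)[\vert x\vert]$, and the passage of $\mathcal{J}$-ideals between $R$ and $R[\vert x\vert]$ --- but it assembles them along a genuinely different route. The paper is asymmetric: the direction ``$I[\vert x\vert]$ is $S$-$\mathcal{J}$ $\Rightarrow$ $I$ is $S$-$\mathcal{J}$'' is a two-line restriction-to-constant-series argument that needs no hypothesis on $\mathcal{J}(R)$ at all, while the other direction applies the converse half of Theorem~\ref{POWER} only in $R$, silently invokes the transfer statement (that $(I:s)$ being a $\mathcal{J}$-ideal of $R$ makes $(I:s)[\vert x\vert]$ a $\mathcal{J}$-ideal of $R[\vert x\vert]$), and then verifies the definition of $S$-$\mathcal{J}$ directly in $R[\vert x\vert]$. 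You instead run a symmetric chain of equivalences, applying Theorem~\ref{POWER} in both rings and in both directions; this obliges you to prove the two facts the paper leaves implicit, namely the transfer lemma (your coefficient induction is exactly the missing justification for the paper's unexplained step) and the fact that $\mathcal{J}(R[\vert x\vert])$ is itself a $\mathcal{J}$-ideal, which the paper never needs because its easy direction avoids Theorem~\ref{POWER} altogether. Your write-up is therefore more self-contained, at the cost of making the easy direction hypothesis-dependent where the paper's is hypothesis-free.

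One loose end: you correctly observe that both uses of the converse half of Theorem~\ref{POWER} require $\mathcal{J}(R)\cap S=\emptyset$ (equivalently $\mathcal{J}(R[\vert x\vert])\cap S=\emptyset$), but ``I would secure this disjointness'' is not an argument --- the theorem does not grant you that hypothesis. The fix is to dispose of the degenerate case at the outset: if some $s_0\in\mathcal{J}(R)\cap S$, then every ideal disjoint from $S$, in either ring, satisfies the defining condition trivially with $s_0$, since $s_0a\in\mathcal{J}(R)$ for all $a\in R$ and $s_0f\in\mathcal{J}(R)+xR[\vert x\vert]=\mathcal{J}(R[\vert x\vert])$ for all $f\in R[\vert x\vert]$; hence both sides of the equivalence hold automatically and there is nothing to prove. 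To be fair, the paper's own proof invokes the converse of Theorem~\ref{POWER} with the same unverified disjointness, so this gap is inherited rather than introduced; with the degenerate case handled, your argument is complete.
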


\begin{proof}  Suppose $I [\vert x\vert]$ is an $S$-$\mathcal{J}$-ideal of $R[\vert x\vert]$, and let $a_1a_2\in I$ for some $a_1, a_2\in R$. Then, $a_1a_2\in I[\vert x\vert]$. Hence, either $a_1s\in\mathcal{J} (R[\vert x\vert])$ or $a_2s\in I [\vert x\vert]$ for some $s\in S$, consequently, either $a_1s\in\mathcal{J} (R)$ or $a_2s\in I $. Thus, $I$ is  $S$-$\mathcal{J}$. Conversely, suppose the ideal $I$ is  $S$-$\mathcal{J}$, and let $f_1(x)f_2(x)\in I [\vert x\vert]$ for some $f_1(x), f_2(x)\in R[\vert x\vert]$. Since $\mathcal{J} (R)$ is a $\mathcal{J}$-ideal, then, by Theorem \ref{POWER},  $(I:s)$ is a $\mathcal{J}$-ideal, and since  $f_1(x)f_2(x)\in I [\vert x\vert]\subseteq (I:s) [\vert x\vert]$, then, either $f_1(x)\in\mathcal{J} (R[\vert x\vert])$ or $f_2(x)\in(I:s) [\vert x\vert]$. If $f_1(x)\in\mathcal{J} (R[\vert x\vert])$, then, $f_1(0)\in\mathcal{J} (R)$, hence, $f_1(0)s\in\mathcal{J} (R)$, and thus, $f_1(x)s\in\mathcal{J} (R)+xR[\vert x\vert]=\mathcal{J} (R[\vert x\vert])$. If $f_2(x)\in(I:s) [\vert x\vert]=(I [\vert x\vert]:s)$, then, $f_2(x)s\in I [\vert x\vert]$. Thus, $I [\vert x\vert]$ is an $S$-$\mathcal{J}$-ideal of $R[\vert x\vert]$.  
\end{proof}

\begin{theorem} Let $R$ be a ring in which $\mathcal{J} (R)$ is a $\mathcal{J}$-ideal. An ideal $I$  of $R$, disjoint from $S$  is  $S$-$\mathcal{J}$, if and only if $I [ x]$ is an $S$-$\mathcal{J}$-ideal of $R[ x]$. 
\end{theorem}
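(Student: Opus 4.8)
The plan is to imitate the proof of the preceding power series theorem almost verbatim, substituting the polynomial ring $R[x]$ for $R[\vert x\vert]$ and replacing the identity $\mathcal{J}(R[\vert x\vert]) = \mathcal{J}(R)+xR[\vert x\vert]$ by the cleaner formula $\mathcal{J}(R[x]) = \mathcal{J}(R)[x]$ recalled just above. Two bookkeeping facts will be used repeatedly: first, $I[x]$ is disjoint from $S$, since the elements of $S$ are constants and $I[x]\cap R = I$, so $I[x]\cap S = I\cap S=\emptyset$; second, for a constant $s\in S$ one has the coefficientwise identity $(I[x]:s)=(I:s)[x]$, because $fs\in I[x]$ exactly when every coefficient $c_i$ of $f$ satisfies $c_is\in I$, i.e. $c_i\in(I:s)$. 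I would also dispose of a degenerate case up front: if $\mathcal{J}(R)\cap S\neq\emptyset$, pick $s_0\in\mathcal{J}(R)\cap S$; then $s_0a\in\mathcal{J}(R)$ for every $a\in R$ and $s_0f\in\mathcal{J}(R)[x]=\mathcal{J}(R[x])$ for every $f\in R[x]$, so both $I$ and $I[x]$ are automatically $S$-$\mathcal{J}$ and the equivalence is trivial. Hence we may assume $\mathcal{J}(R)\cap S=\emptyset$, which is precisely the hypothesis needed to invoke the converse half of Theorem \ref{POWER}.

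For the direction ``$I[x]$ is $S$-$\mathcal{J}$ $\Rightarrow$ $I$ is $S$-$\mathcal{J}$'', I would take $a_1a_2\in I$ with $a_1,a_2\in R$, regard them as constant polynomials so that $a_1a_2\in I[x]$, and apply the $S$-$\mathcal{J}$ property of $I[x]$ to obtain $s\in S$ with $a_1s\in\mathcal{J}(R[x])$ or $a_2s\in I[x]$. Intersecting with $R$ and using $\mathcal{J}(R[x])\cap R=\mathcal{J}(R)[x]\cap R=\mathcal{J}(R)$ together with $I[x]\cap R=I$ yields $a_1s\in\mathcal{J}(R)$ or $a_2s\in I$, as required.

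For the converse, assume $I$ is $S$-$\mathcal{J}$ and $\mathcal{J}(R)$ is a $\mathcal{J}$-ideal (disjoint from $S$ by the reduction above). By Theorem \ref{POWER}, $(I:s)$ is a $\mathcal{J}$-ideal of $R$ for some $s\in S$. The engine of the argument is then the promotion of this to the assertion that $(I:s)[x]$ is a $\mathcal{J}$-ideal of $R[x]$. Granting this, take $f_1(x)f_2(x)\in I[x]\subseteq (I:s)[x]$; the $\mathcal{J}$-ideal property of $(I:s)[x]$ forces $f_1(x)\in\mathcal{J}(R[x])$ or $f_2(x)\in(I:s)[x]$. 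In the first case, since $\mathcal{J}(R[x])=\mathcal{J}(R)[x]$ is an ideal, $f_1(x)s\in\mathcal{J}(R[x])$; in the second case $f_2(x)\in(I:s)[x]=(I[x]:s)$, so $f_2(x)s\in I[x]$. Either way $I[x]$ is $S$-$\mathcal{J}$.

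The main obstacle is exactly the promotion step, which is also the (implicit) crux of the power series proof: showing that $(I:s)[x]$ is a $\mathcal{J}$-ideal of $R[x]$ whenever $(I:s)$ is a $\mathcal{J}$-ideal of $R$. I expect this to rest on the formula $\mathcal{J}(R[x])=\mathcal{J}(R)[x]$ together with a coefficient (content-type) argument: given $g(x)h(x)\in(I:s)[x]$ with $g(x)\notin\mathcal{J}(R)[x]$, one isolates a coefficient of $g$ lying outside $\mathcal{J}(R)$ and runs the $\mathcal{J}$-ideal property of $(I:s)$ in $R$ degree by degree to force every coefficient of $h$ into $(I:s)$, i.e. $h(x)\in(I:s)[x]$. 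This coefficientwise induction, rather than any conceptual difficulty, is where the genuine work lies; once it is in hand, every other step transfers formally from the power series case.
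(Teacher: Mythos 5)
Your overall route is the same as the paper's: the paper's entire proof of this theorem is the single sentence ``Similar to the proof of the above theorem,'' i.e., rerun the power series argument using $\mathcal{J}(R[x])=\mathcal{J}(R)[x]$ in place of $\mathcal{J}(R[\vert x\vert])=\mathcal{J}(R)+xR[\vert x\vert]$, and that is exactly your plan. Your forward direction, the identity $(I[x]:s)=(I:s)[x]$, and your reduction to the case $\mathcal{J}(R)\cap S=\emptyset$ (which is genuinely needed to invoke the converse half of Theorem \ref{POWER}, and which the paper glosses over) are all correct. (Both you and the paper lean equally on the asserted formula $\mathcal{J}(R[x])=\mathcal{J}(R)[x]$; the classical Amitsur/Atiyah--MacDonald statement is $\mathcal{J}(R[x])=\mathrm{Nil}(R)[x]$, so this is a shared caveat, not a defect of your write-up relative to the paper's.)

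The gap is in the promotion step, which you correctly single out as the crux but for which your proposed method would fail. The degree-by-degree induction works for power series precisely because $f\notin\mathcal{J}(R[\vert x\vert])$ means the \emph{constant} coefficient lies outside $\mathcal{J}(R)$: the induction anchors at degree $0$, and every cross term involves coefficients of $h$ already forced into the ideal. For polynomials, $g\notin\mathcal{J}(R)[x]$ only gives some coefficient $a_k\notin\mathcal{J}(R)$ with possibly $k>0$; the coefficient of $x^k$ in $gh$ is $a_kb_0+a_{k-1}b_1+\cdots+a_0b_k$, and the terms $a_ib_j$ with $i<k$ satisfy only $a_i\in\mathcal{J}(R)$, which does \emph{not} put them in $(I:s)$ (in general $\mathcal{J}(R)\cdot R\not\subseteq (I:s)$). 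So you cannot isolate $a_kb_0$, the induction has no anchor, and it never starts. The repair uses the content ideal $c(\cdot)$ and the Dedekind--Mertens lemma: with $m=\deg h$, $c(g)^{m+1}c(h)=c(g)^{m}c(gh)\subseteq (I:s)$. The hypothesis that $\mathcal{J}(R)$ is a $\mathcal{J}$-ideal says exactly that $\mathcal{J}(R)$ is prime, so $c(g)\not\subseteq\mathcal{J}(R)$ forces $c(g)^{m+1}\not\subseteq\mathcal{J}(R)$; then the ideal-product characterization of $\mathcal{J}$-ideals (Proposition \ref{10} with $S=\{1\}$, applied to the $\mathcal{J}$-ideal $(I:s)$) yields $c(h)\subseteq (I:s)$, i.e., $h\in(I:s)[x]$. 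A telltale sign that your induction could not suffice is that it never uses the standing hypothesis that $\mathcal{J}(R)$ is a $\mathcal{J}$-ideal inside the promotion step: for power series the promotion is unconditional, but for polynomials that hypothesis (via primality of $\mathcal{J}(R)$) is where the real work is absorbed.
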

\begin{proof} Similar to the proof of the above theorem. 
\end{proof}

In the following, we establish the relationship between $S$-$\mathcal{J}$-ideals of a ring $R$ and those of the idealization ring $R\boxplus M=\{(a, m): a\in R, m\in M\}$, where $M$ is an $R$-module, which is defined  with the usual addition and the multiplication defined as $(a_1, m_1)(a_2, m_2)=(a_1a_2, a_1m_2+a_2m_1)$, for all $(a_1, m_1), (a_2, m_2)\in R\boxplus M$. If $S$ is a multiplicatively closed subset of $R$, then $S_M=S\boxplus M$ is a multiplicatively closed subset of $R\boxplus M$. In addition, $\mathcal{J}(R\boxplus M)= \mathcal{J}(R)\boxplus M.$

\begin{theorem}\label{last}
Let  $I$ be an ideal of $R$ disjoint from $S$, and $M$ be an $R$-module. Then, $I\boxplus M$ is an $S_M$-$\mathcal{J}$-ideal of $R\boxplus M$ if and only if $I$ is an $S$-$\mathcal{J}$-ideal of $R$. 
\end{theorem}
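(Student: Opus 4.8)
The plan is to exploit the fact that membership in $I\boxplus M$, membership in $\mathcal{J}(R\boxplus M)=\mathcal{J}(R)\boxplus M$, and the product operation in $R\boxplus M$ are all governed \emph{only} by the first coordinate: an element $(x,m)$ lies in $I\boxplus M$ iff $x\in I$, it lies in $\mathcal{J}(R\boxplus M)$ iff $x\in\mathcal{J}(R)$, and $(a_1,m_1)(a_2,m_2)$ has first coordinate $a_1a_2$ regardless of the module components. Because of this decoupling, the whole $S_M$-$\mathcal{J}$ condition will reduce cleanly to the $S$-$\mathcal{J}$ condition on first coordinates. I would first record the disjointness bookkeeping: since $(I\boxplus M)\cap S_M=(I\cap S)\boxplus M$, the ideal $I\boxplus M$ is disjoint from $S_M$ precisely when $I$ is disjoint from $S$, so the standing hypotheses of the two statements match.

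For the forward direction I would assume $I\boxplus M$ is $S_M$-$\mathcal{J}$ associated with some fixed $(s,m_0)\in S_M$ (so in particular $s\in S$), and test the $S$-$\mathcal{J}$ condition for $I$ with the same $s$. Given $a_1,a_2\in R$ with $a_1a_2\in I$, I lift to the embedded elements $(a_1,0),(a_2,0)$, noting $(a_1,0)(a_2,0)=(a_1a_2,0)\in I\boxplus M$. Applying the definition gives either $(s,m_0)(a_1,0)=(sa_1,a_1m_0)\in\mathcal{J}(R)\boxplus M$ or $(s,m_0)(a_2,0)=(sa_2,a_2m_0)\in I\boxplus M$; reading off first coordinates yields exactly $sa_1\in\mathcal{J}(R)$ or $sa_2\in I$, which is what $S$-$\mathcal{J}$-ness of $I$ requires.

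For the reverse direction I would assume $I$ is $S$-$\mathcal{J}$ associated with $s\in S$ and claim $I\boxplus M$ is $S_M$-$\mathcal{J}$ associated with $(s,0)$. Taking an arbitrary product $(a_1,m_1)(a_2,m_2)=(a_1a_2,\,a_1m_2+a_2m_1)\in I\boxplus M$, the first-coordinate criterion gives $a_1a_2\in I$, whence $sa_1\in\mathcal{J}(R)$ or $sa_2\in I$. In the first case $(s,0)(a_1,m_1)=(sa_1,sm_1)\in\mathcal{J}(R)\boxplus M=\mathcal{J}(R\boxplus M)$, and in the second $(s,0)(a_2,m_2)=(sa_2,sm_2)\in I\boxplus M$, establishing the condition for every choice of module components.

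Honestly I do not expect a genuine obstacle here; the content is entirely in the three structural identities for $\boxplus$, after which both implications are direct. The only points demanding care are purely organizational: keeping track of which element of $S$ (respectively $S_M$) is the associated one, choosing the convenient representative $(s,0)$ in the reverse direction, and confirming in the forward direction that testing on the embedded elements $(a_i,0)$ is legitimate and sufficient. Since neither $I\boxplus M$ nor $\mathcal{J}(R)\boxplus M$ imposes any restriction on the second coordinate, no subtlety arises from the module part, so the argument should close without further hypotheses.
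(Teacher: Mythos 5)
Your proof is correct and follows essentially the same route as the paper's: both directions reduce to first coordinates via $\mathcal{J}(R\boxplus M)=\mathcal{J}(R)\boxplus M$, with the element $(s,0)\in S_M$ witnessing the reverse direction, exactly as in the paper. The only cosmetic difference is that you test the forward direction on the embedded elements $(a_i,0)$ while the paper uses arbitrary module components, which changes nothing.
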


\begin{proof} Suppose $I$ is  an $S$-$\mathcal{J}$-ideal of $R$. Notice that $S_M\cap (I\boxplus M)=\emptyset$.  Let $(a_1, m_1)(a_2, m_2)\in I\boxplus M$ for some $a_1, a_2\in R$, and $m_1, m_2\in M$. Then, $a_1 a_2\in I$, and hence, either $a_1s\in \mathcal{J}(R)$ or $a_2s \in I$ for some $s\in S$. If $a_2s \in I$, then, $(a_2, m_2)(s, 0) \in I\boxplus M$. If  $a_1s \in \mathcal{J}(R)$, then $(a_1, m_1)(s, 0)\in \mathcal{J}(R)\boxplus M=\mathcal{J}(R\boxplus M)$, where $(s, 0)\in  S_M$. Thus, $I\boxplus M$ is an $(S_M)$-$\mathcal{J}$-ideal of $R\boxplus M$.

\medskip

\noindent Conversely, suppose $I\boxplus M$ is an $(S_M)$-$\mathcal{J}$-ideal of $R\boxplus M$. Let $ab\in I$, for some $a, b\in R$. Then,  $(a, m_1)(b, m_2)\in I\boxplus M$ for some $m_1, m_2\in M$, and hence, either $(a, m_1)(s, m)\in \mathcal{J}(R\boxplus M)$, or $(b, m_2)(s, m)\in I\boxplus M$, for some $(s, m)\in S_M$. Consequently, either $as \in \mathcal{J}(R)$ or $bs \in I$. Thus, $I$ is an  $S$-$\mathcal{J}$-ideal of $R$. 
\end{proof}

\medskip
If $I$ is an ideal of $R$, then for    a submodule $N$ of $M$, $I\boxplus N$  constitutes an ideal of $R\boxplus M$ if and only if $IM \subseteq N$.

\begin{proposition} \label{prop}
Let $N$ be a submodule of an $R$-module $M$,  and $I$ be an ideal of $R$ where $IM\subseteq N$. If $I\boxplus N$ is an $S\boxplus M$-$\mathcal{J}$-ideal of $R\boxplus M$, then $I$ is an $S$-$\mathcal{J}$-ideal of $R$.
\end{proposition}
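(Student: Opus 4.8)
The plan is to mimic the converse direction of Theorem \ref{last}, exploiting that projection onto the first coordinate collapses the module data and that $\mathcal{J}(R\boxplus M)=\mathcal{J}(R)\boxplus M$.

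First I would verify the disjointness condition $I\cap S=\emptyset$ that $I$ must satisfy to be a candidate $S$-$\mathcal{J}$-ideal. If some $s$ lay in $I\cap S$, then $(s,0)$ would sit in both $I\boxplus N$ (since $s\in I$ and $0\in N$) and $S\boxplus M$, contradicting the disjointness that is built into the hypothesis that $I\boxplus N$ is an $(S\boxplus M)$-$\mathcal{J}$-ideal. Note that the hypothesis $IM\subseteq N$ is used only to guarantee, via the remark preceding the proposition, that $I\boxplus N$ is actually an ideal of $R\boxplus M$, so that the assumption is meaningful; it plays no further role.

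Next, let $(s,m)\in S\boxplus M$ be the fixed element associated with the $(S\boxplus M)$-$\mathcal{J}$-ideal $I\boxplus N$; I claim its first coordinate $s\in S$ witnesses that $I$ is $S$-$\mathcal{J}$. Given $a,b\in R$ with $ab\in I$, I would lift to the idealization by considering $(a,0)(b,0)=(ab,0)$, which lies in $I\boxplus N$ because $ab\in I$ and $0\in N$. Applying the $(S\boxplus M)$-$\mathcal{J}$ property with the fixed element $(s,m)$, either $(a,0)(s,m)\in\mathcal{J}(R\boxplus M)$ or $(b,0)(s,m)\in I\boxplus N$.

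The final step is a direct computation: $(a,0)(s,m)=(as,am)$ and $(b,0)(s,m)=(bs,bm)$. Using $\mathcal{J}(R\boxplus M)=\mathcal{J}(R)\boxplus M$, the first alternative forces $as\in\mathcal{J}(R)$, while the second forces $bs\in I$; in both cases the module components $am$ and $bm$ are irrelevant, being absorbed by $M$ and $N$ respectively. Hence for this fixed $s\in S$ we always obtain $as\in\mathcal{J}(R)$ or $bs\in I$, which is exactly the statement that $I$ is an $S$-$\mathcal{J}$-ideal of $R$. I do not anticipate a genuine obstacle: the argument is routine, and the only point requiring care is to choose the test elements with zero module coordinate so that the resulting first coordinates are precisely $as$ and $bs$. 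This also explains why only this one direction is asserted for general $N$ (in contrast to the full equivalence of Theorem \ref{last}, where $N=M$): the reverse implication would require controlling the module part against an arbitrary submodule $N$, which the first-coordinate projection cannot do.
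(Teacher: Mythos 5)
Your proof is correct and takes essentially the same route as the paper's: lift $ab\in I$ to $(a,0)(b,0)\in I\boxplus N$, apply the $(S\boxplus M)$-$\mathcal{J}$ property with the fixed element $(s,m)$, and read off the first coordinates using $\mathcal{J}(R\boxplus M)=\mathcal{J}(R)\boxplus M$. Your preliminary check that $S\cap I=\emptyset$ and your remark on the role of $IM\subseteq N$ likewise mirror the paper's (terser) treatment.
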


\begin{proof}
Since $S_M\cap(I\boxplus N)=\emptyset$, then,   $S\cap I=\emptyset$. Let $xy\in I$ for some $x,y\in R$. Then $(x,0)(y,0)\in I\boxplus N$ thus $(x,0)(s,n)\in \mathcal{J} (R)\boxplus M$ or $(y,0)(s,n)\in I\boxplus N$ for some $(s,n)\in S_M$. Thus $xs\in \mathcal{J} (R)$ or $ys\in I$ and thus $I$ is an $S$-$\mathcal{J}$-ideal of $R$.
\end{proof}

The converse of Proposition \ref{prop} is not true in general as we show next.

\begin{example} In $R=\mathbb{Z}$, the zero ideal is prime and $\mathcal{J}(\mathbb{Z})=0$, hence, $(\mathcal{J}(\mathbb{Z}):s)=(0:s)=0$,  for any $s\in S=\{1,3, 9, 27, \cdots\}$, because $\mathbb{Z}$ is an integral domain. Thus, the paragraph above the Proposition \ref{RR}, the zero ideal is $S$-$\mathcal{J}$. Take the $\mathbb{Z}$-module $\mathbb{Z}_{6}$, then,  $0\boxplus 0$ is not an $(S\boxplus \mathbb{Z}_{6})$-$\mathcal{J}$-ideal, because $(2,0)(0, 3) \in 0\boxplus0$, however, $(2s,2m)=(s,m)(2,0)\not\in \mathcal{J}(\mathbb{Z})\boxplus Z_6 = 0\boxplus Z_6$ and $(0, 3s)=(s,m)(0,3) \not\in 0\boxplus 0$ for all $(s,m) \in S\boxplus Z_6$.
\end{example}

Let $R$ and $A$ be two commutative rings, $f: R \to A$ be a ring homomorphism, and $J$ be an ideal of the ring $A$.
The amalgamation of $R$ with $A$ along $J$ with respect to $f$, denoted by $R \bowtie^f J$, is the subring of $R\times A$ given by 
\[
R\bowtie^f J=\{(r,f(r)+j) \vert r\in R, j\in J\}. 
\]
This construction is a generalization of the amalgamated duplication of a ring  along an ideal. Recall that  for an ideal $J$ of a ring $R$, the amalgamated duplication of $R$ along $J$, denoted by $R\bowtie J$, is the subring of $R\times R$ given by 
\[
R \bowtie J=\{(r,r+j) \vert r\in R, j\in J\}.
\]
In addition, if $I$ is an ideal of $R$, then, $I \bowtie^f J=\{(i,f(i)+j) \vert i\in I, j\in J\}$, is an ideal of $R \bowtie^f J$. Moreover, for a multiplicatively closed subset $S$ of $R$,  $S^{\bowtie}=\{(s,f(s)) \vert s\in S\}$ is a multiplicatively closed subset of $R \bowtie^f J$. In addition, by Lemma 2.15 of \cite{HACE}, we have $\mathcal{J}(R \bowtie^f J) = \{(r,f(r)+j): r \in \mathcal{J}(R), f(r)+j \in \mathcal{J}(f(R)+J)\}$. While by Lemma 3 of \cite {FINO}, if $J \subseteq \mathcal{J}(A)$, then  $\mathcal{J}(R \bowtie^f J) = \mathcal{J}(R) \bowtie^f J$. 

\begin{proposition}\label{WHY} Let $f: R \to A$ be a ring homomorphism, $J$ be an ideal of the ring $A$,
and $R \bowtie^f J$ be the amalgamation of $R$ with $A$ along $J$ with respect to $f$. Let $I$ be an ideal of $R$ disjoint from $S$.
\end{proposition}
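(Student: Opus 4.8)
The plan is to prove the expected equivalence — that $I\bowtie^f J$ is an $S^{\bowtie}$-$\mathcal{J}$-ideal of $R\bowtie^f J$ if and only if $I$ is an $S$-$\mathcal{J}$-ideal of $R$ — by mimicking the strategy used for the idealization ring in Theorem \ref{last}, with the amalgamation's Jacobson radical formula playing the role that $\mathcal{J}(R\boxplus M)=\mathcal{J}(R)\boxplus M$ played there. First I would dispatch the disjointness bookkeeping: if some $(s,f(s))\in S^{\bowtie}$ were to lie in $I\bowtie^f J$, then its first coordinate $s$ would belong to $I\cap S$, contradicting $S\cap I=\emptyset$; hence $S^{\bowtie}\cap (I\bowtie^f J)=\emptyset$ is automatic.

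For the direction that descends to $R$, suppose $I\bowtie^f J$ is $S^{\bowtie}$-$\mathcal{J}$ and take $a_1,a_2\in R$ with $a_1a_2\in I$. The key observation is that the map $a\mapsto (a,f(a))$ transports products faithfully: $(a_1,f(a_1))(a_2,f(a_2))=(a_1a_2,f(a_1a_2))\in I\bowtie^f J$, taking $j=0$. Applying the hypothesis with its associated $(s,f(s))$ yields either $(sa_1,f(sa_1))\in \mathcal{J}(R\bowtie^f J)$ or $(sa_2,f(sa_2))\in I\bowtie^f J$, and reading off first coordinates gives $sa_1\in \mathcal{J}(R)$ or $sa_2\in I$. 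This implication needs no constraint on $J$, since I only use that the first-coordinate projection carries $\mathcal{J}(R\bowtie^f J)$ into $\mathcal{J}(R)$ and $I\bowtie^f J$ into $I$.

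For the ascending direction I would assume $I$ is $S$-$\mathcal{J}$ with associated $s\in S$ and take a product $(a_1,f(a_1)+j_1)(a_2,f(a_2)+j_2)\in I\bowtie^f J$; comparing first coordinates forces $a_1a_2\in I$, so either $sa_1\in \mathcal{J}(R)$ or $sa_2\in I$. The work is then to check that multiplying the appropriate factor by $(s,f(s))\in S^{\bowtie}$ lands it in the right ideal. When $sa_2\in I$, the product $(s,f(s))(a_2,f(a_2)+j_2)$ has first coordinate $sa_2\in I$ and second coordinate $f(sa_2)+f(s)j_2$, which is of the form $f(i')+j'$ with $i'=sa_2\in I$ and $j'=f(s)j_2\in J$ because $J$ is an ideal of $A$; hence it lies in $I\bowtie^f J$. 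When $sa_1\in \mathcal{J}(R)$, the analogous computation places $(s,f(s))(a_1,f(a_1)+j_1)$ in $\mathcal{J}(R)\bowtie^f J$, and here I would invoke the hypothesis $J\subseteq \mathcal{J}(A)$ together with Lemma 3 of \cite{FINO} to identify $\mathcal{J}(R)\bowtie^f J$ with $\mathcal{J}(R\bowtie^f J)$.

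I expect the ascending direction to be the main obstacle, and specifically the correct handling of the Jacobson radical. Without $J\subseteq \mathcal{J}(A)$ one has only the more delicate description $\mathcal{J}(R\bowtie^f J)=\{(r,f(r)+j): r\in \mathcal{J}(R),\ f(r)+j\in \mathcal{J}(f(R)+J)\}$ from Lemma 2.15 of \cite{HACE}, and then membership of $(sa_1,f(sa_1)+f(s)j_1)$ in $\mathcal{J}(R\bowtie^f J)$ is no longer guaranteed by $sa_1\in \mathcal{J}(R)$ alone, since the second coordinate need not fall in $\mathcal{J}(f(R)+J)$. This is why I anticipate the clean equivalence requiring the standing assumption $J\subseteq \mathcal{J}(A)$; if the proposition instead claims only the descending implication, that half goes through with no condition on $J$.
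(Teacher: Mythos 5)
Your proposal is correct and follows essentially the same route as the paper: the paper's Proposition \ref{WHY} consists of exactly the two one-sided implications you identified, with the descending direction proved by embedding via $a\mapsto(a,f(a))$ and reading off first coordinates, and the ascending direction proved by the coordinate computation $(s,f(s))(a,f(a)+j)=(sa,f(sa)+f(s)j)$ together with the identification $\mathcal{J}(R)\bowtie^f J=\mathcal{J}(R\bowtie^f J)$ from Lemma 3 of \cite{FINO} under the hypothesis $J\subseteq\mathcal{J}(A)$. Your diagnosis of why $J\subseteq\mathcal{J}(A)$ is needed only for the ascending half matches the paper's statement exactly.
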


$(1)$ If $I \bowtie^fJ$ is an $S^{\bowtie}$-$\mathcal{J}$-ideal of $R \bowtie^f J$, then, $I$ is an $S$-$\mathcal{J}$-ideal of $R$.

 \medskip
$(2)$ If $I$ is an $S$-$\mathcal{J}$-ideal of $R$, and $J\subseteq\mathcal{J}(A),$  then, $I \bowtie^fJ$ is an $S^{\bowtie}$-$\mathcal{J}$-ideal of $R \bowtie^f J$

\begin{proof} Since $S\cap I=\emptyset$, then, $S^{\bowtie}\cap( I\bowtie^fJ)=\emptyset$.

$(1)$ Suppose that $I \bowtie^fJ$ is an  $S^{\bowtie}$-$\mathcal{J}$-ideal of $R \bowtie^f J$, and $ab\in I$ for some $a, b \in R$. Then, $(a, f(a))(b, f(b))\in I \bowtie^fJ$, and hence by assumption, either $(a, f(a))(s,f(s))\in \mathcal{J}(R \bowtie^f J)$
or $(b, f(b))(s,f(s))\in  I\bowtie^fJ$, for some $(s,f(s))\in S^{\bowtie}$. If $(a, f(a))(s,f(s))\in \mathcal{J}(R \bowtie^f J)$, then, $as\in\mathcal{J}(R)$. If $(b, f(b))(s,f(s))\in  I\bowtie^fJ$ , then, $bs\in I$.

\medskip
$(2)$ Suppose that $I$ is an $S$-$\mathcal{J}$-ideal of $R$, and that
\[
(a, f(a)+j_1)(b, f(b)+j_2)\in I \bowtie^fJ
\]
for some $a, b\in R$ and $j_1, j_2 \in J$.
Then, $ab\in I$, and hence by assumption, either $as\in \mathcal{J}(R)$ or $bs\in I$. If $bs\in I$,
then $(b, f(b)+j_2)(s,f(s))\in  I\bowtie^fJ$. If $as\in \mathcal{J}(R)$, then, $(a, f(a)+j_1)(s,f(s))\in\mathcal{J}(R) \bowtie^f J=\mathcal{J}(R \bowtie^f J)$.
\end{proof}

\section{$S$-$\mathcal{J}$-ideals in noncommutative rings}

In this section, unless  otherwise specified,  $R$ will refer to  an associative, noncommutative ring without identity. Also,  $S$ will denote an $m$-system of $R$.  We now present our definition of right  $S$-$\mathcal{J}$-ideal,  in line with Definition 2.2  of \cite{ref4}.

\begin{definition}\label{NON}
An ideal $P$ of $R$, that is disjoint from $S$, is called a right $S$-$\mathcal{J}$-ideal if and only if whenever $IK\subseteq P$, then either $I\langle s\rangle \subseteq \mathcal{J}(R)$ or $K\langle s\rangle\subseteq P$ for all ideals $I,K$ of $R$, and for some (fixed) $s\in S$.
\end{definition}

Definition 5.1 of \cite{ref1} states that an  ideal $P$ of  $R$ is a $\mathcal{J}$-ideal if whenever $a_1, a_2 \in R$ with $a_1Ra_2\in  P$, then,  $a_1\in \mathcal{J}(R)$ or  $a_2 \in P$. Equivalently, for ideals $I$, $K$ of $R$, with $IK \subseteq P$ either $I \subseteq \mathcal{J}(R)$ or $K\subseteq P$.  It is easy to check that every $\mathcal{J}$-ideal, disjoint from $S$ is a right $S$-$\mathcal{J}$-ideal, and the classes of $\mathcal{J}$-ideals and right $S$-$\mathcal{J}$-ideals coincide when $S\subseteq U(R)$.

\begin{proposition}\label{H}
Let $R$ be a ring with an identity and $P\lhd R$ with $P \cap S=\emptyset$. The following are equivalent:

$(1)$ $P$ is a right  $S$-$\mathcal{J}$-ideal.

\medskip
$(2)$ For all $x,y \in R$ with $\langle x\rangle\langle y\rangle \subseteq P$, either $\langle x\rangle\langle s\rangle\subseteq\mathcal{J}(R)$ or $\langle y\rangle\langle s\rangle\subseteq P$ for some $s\in S $.

\medskip
$(3)$ For all $x, y \in R$ with $xRy \subseteq P$, either $x\langle s\rangle\subseteq \mathcal{J}(R)$ or $y\langle s\rangle\subseteq P$ for some $s\in S $.

\end{proposition}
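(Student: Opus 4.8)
The plan is to prove the three conditions equivalent via the cycle $(1)\Rightarrow(2)\Rightarrow(3)\Rightarrow(1)$, carrying the \emph{same} distinguished element $s\in S$ through each implication. Throughout I would exploit the two facts that hold because $R$ has an identity: the two-sided ideal generated by an element is $\langle x\rangle=RxR$, and in particular $x=1\cdot x\cdot 1\in\langle x\rangle$. I would also use repeatedly that both $\mathcal{J}(R)$ and $P$ are additively closed, so that a product of ideals lies inside one of them precisely when every generating product does.

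For $(1)\Rightarrow(2)$ I would simply specialise the defining condition of a right $S$-$\mathcal{J}$-ideal to the principal ideals $I=\langle x\rangle$ and $K=\langle y\rangle$: if $\langle x\rangle\langle y\rangle\subseteq P$, then Definition \ref{NON} gives, for the fixed $s$, either $\langle x\rangle\langle s\rangle\subseteq\mathcal{J}(R)$ or $\langle y\rangle\langle s\rangle\subseteq P$, which is exactly $(2)$. For $(2)\Rightarrow(3)$, starting from $xRy\subseteq P$, I would first promote this to an inclusion of ideal products: since $R$ has an identity and $P$ is two-sided, $\langle x\rangle\langle y\rangle=RxR\,RyR=R(xRy)R\subseteq RPR\subseteq P$. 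Applying $(2)$ then yields $\langle x\rangle\langle s\rangle\subseteq\mathcal{J}(R)$ or $\langle y\rangle\langle s\rangle\subseteq P$, and because $x\in\langle x\rangle$ and $y\in\langle y\rangle$ I may restrict to the single elements to obtain $x\langle s\rangle\subseteq\mathcal{J}(R)$ or $y\langle s\rangle\subseteq P$, i.e.\ $(3)$.

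The substance of the argument is $(3)\Rightarrow(1)$. Given ideals $I,K$ with $IK\subseteq P$, I would assume the first alternative of Definition \ref{NON} fails, that is $I\langle s\rangle\not\subseteq\mathcal{J}(R)$, and aim for $K\langle s\rangle\subseteq P$. The first step is to convert this ideal-level non-containment into a single-element witness: because $\mathcal{J}(R)$ is closed under addition, $I\langle s\rangle\subseteq\mathcal{J}(R)$ would follow if $x\langle s\rangle\subseteq\mathcal{J}(R)$ held for every $x\in I$, so there must exist one $x\in I$ with $x\langle s\rangle\not\subseteq\mathcal{J}(R)$. Fixing such an $x$, for each $y\in K$ I have $xRy\subseteq(xR)K\subseteq IK\subseteq P$ (using that $I$ and $K$ are two-sided), so $(3)$ applies to the pair $(x,y)$ with the same $s$ and, the first alternative being excluded, forces $y\langle s\rangle\subseteq P$. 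Since this holds for all $y\in K$ and $P$ is additively closed, summing gives $K\langle s\rangle\subseteq P$, completing the cycle.

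The delicate point, and the one I would take care to justify, is the uniformity of $s$: the whole scheme works only under the reading of $(2)$ and $(3)$ in which a single $s\in S$ serves all pairs simultaneously (consistent with the fixed-$s$ convention of Definition \ref{use} and Proposition \ref{10}), since in $(3)\Rightarrow(1)$ I invoke $(3)$ for every $y\in K$ and need the \emph{same} $s$ each time in order to sum the conclusions into $K\langle s\rangle\subseteq P$. The accompanying technical step — that additive closure of $\mathcal{J}(R)$ lets one replace ``$I\langle s\rangle\not\subseteq\mathcal{J}(R)$'' by an honest element $x\in I$ with $x\langle s\rangle\not\subseteq\mathcal{J}(R)$ — is where the proof could go wrong if stated carelessly, so I would record it explicitly. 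Finally, I would note that it is the identity of $R$ that does the real work here, guaranteeing $x\in\langle x\rangle$ and $RR=R$; the $m$-system hypothesis on $S$ plays no active role in this particular equivalence.
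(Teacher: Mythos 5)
Your proof is correct and follows essentially the same route as the paper's: the cycle $(1)\Rightarrow(2)\Rightarrow(3)\Rightarrow(1)$, with $(1)\Rightarrow(2)$ by specializing to principal ideals, $(2)\Rightarrow(3)$ via $\langle x\rangle\langle y\rangle\subseteq P$, and $(3)\Rightarrow(1)$ by extracting a witness $x\in I$ with $x\langle s\rangle\not\subseteq\mathcal{J}(R)$ and applying $(3)$ to every $y\in K$. The only difference is that you make explicit details the paper leaves implicit (the computation $RxR\,RyR\subseteq P$, the additive-closure argument for the witness, and the fixed-$s$ reading of $(2)$ and $(3)$ needed to sum the conclusions), all of which are consistent with the paper's intended conventions.
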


\begin{proof}
$(1)\Rightarrow (2)$ Clear by Definition \ref{1} .

\medskip
$(2)\Rightarrow (3)$ Let $xRy\subseteq P$ for some $x, y\in R$. Then, $\langle x\rangle\langle y\rangle \subseteq P$, hence, by $(2)$, either $x\langle s\rangle \subseteq \langle x\rangle\langle s\rangle\subseteq\mathcal{J}(R)$, or $y\langle s\rangle\subseteq \langle y\rangle\langle s\rangle\subseteq P$ for some $s\in S$.

\medskip
$(3)\Rightarrow (1)$ Suppose $IJ\subseteq P$, for some ideals $I,J$ of $R$. If $I\langle s\rangle \not\subseteq \mathcal{J}(R)$, then, there exists $a\in I$ such that $a\langle s\rangle\not\subseteq\mathcal{J} (R)$. For all $b\in J$, we have  $aRb \subseteq IJ\subseteq P$, hence, by $(3)$, we have $b\langle s\rangle\subseteq P$, and thus,  $J\langle s\rangle\subseteq P$. That completes the proof.
\end{proof}

From the above proposition, one can easily show that if $R$ is  \textit{a commutative ring with identity, and $S$ is a multiplicatively closed subset of $R$}, then, Definition \ref{use} and Definition \ref{NON} are equivalent.

\begin{proposition} Let $R$ be a commutative ring with identity, and let $S$ be a multiplicatively closed subset of $R$. Then, Definition \ref{use} and Definition \ref{NON} are equivalent. 
\end{proposition}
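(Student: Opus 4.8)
The plan is to read the equivalence off from Proposition~\ref{H}, which already supplies, in any ring with identity, the equivalence between being a right $S$-$\mathcal{J}$-ideal (Definition~\ref{NON}, i.e.\ its condition (1)) and the purely principal-ideal condition (2). Since $R$ is commutative with identity and $S$ is multiplicatively closed, $S$ is automatically an $m$-system (for $a,b\in S$ we have $a\cdot 1\cdot b=ab\in S$), so Definition~\ref{NON} is in force and Proposition~\ref{H} is available; moreover left and right conditions coincide, so the word \emph{right} is harmless here.

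First I would invoke the equivalence $(1)\Leftrightarrow(2)$ of Proposition~\ref{H}: an ideal $P$ disjoint from $S$ is a right $S$-$\mathcal{J}$-ideal exactly when there is a fixed $s\in S$ such that, for all $x,y\in R$ with $\langle x\rangle\langle y\rangle\subseteq P$, either $\langle x\rangle\langle s\rangle\subseteq\mathcal{J}(R)$ or $\langle y\rangle\langle s\rangle\subseteq P$. Next I would translate each of these three ideal inclusions into a membership statement, using commutativity and the presence of $1$. The two facts I need are that in a commutative ring with identity $\langle x\rangle\langle y\rangle=\langle xy\rangle$ (since $xy=(x\cdot 1)(y\cdot 1)\in\langle x\rangle\langle y\rangle$ and $\langle x\rangle\langle y\rangle$ is an ideal containing $xy$), and that a principal ideal $\langle t\rangle=tR$ is contained in an ideal $K$ precisely when $t\in K$ (forward because $t=t\cdot 1\in\langle t\rangle$, backward because $K$ absorbs multiplication). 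Applying these yields: $\langle x\rangle\langle y\rangle\subseteq P$ holds precisely when $xy\in P$; $\langle x\rangle\langle s\rangle\subseteq\mathcal{J}(R)$ precisely when $xs\in\mathcal{J}(R)$; and $\langle y\rangle\langle s\rangle\subseteq P$ precisely when $ys\in P$. Substituting, condition (2) becomes: there is a fixed $s\in S$ such that, for all $x,y\in R$, if $xy\in P$ then $sx\in\mathcal{J}(R)$ or $sy\in P$, which (after writing $xs=sx$ and $ys=sy$) is word-for-word Definition~\ref{use}. Chaining the two equivalences closes the argument.

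The ideal-to-element bookkeeping is routine; the one place where I expect to need care is the quantifier on $s$. I must make sure the ``for some $s$'' appearing in Proposition~\ref{H}(2) denotes the single fixed element inherited from Definition~\ref{NON}, and is not read as an $s$ depending on the pair $(x,y)$. Keeping this one fixed $s$ throughout the translation is exactly what makes the existential of Definition~\ref{NON} line up with the ``there exists a fixed $s$'' of Definition~\ref{use}; so the substantive step is tracking that quantifier, not the principal-ideal identities themselves.
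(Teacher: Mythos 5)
Your proposal is correct, and it is precisely the route the paper itself advertises in the sentence preceding this proposition (``From the above proposition, one can easily show\dots''), yet it is not the proof the paper actually writes down. You obtain both directions by citing Proposition~\ref{H}(1)$\Leftrightarrow$(2) and then translating via the dictionary $\langle x\rangle\langle y\rangle=\langle xy\rangle$ and $\langle t\rangle\subseteq K\Leftrightarrow t\in K$, valid in a commutative ring with identity. The paper instead re-proves the equivalence directly: for Definition~\ref{NON}$\Rightarrow$Definition~\ref{use} it applies Definition~\ref{NON} to the principal ideals $\langle a\rangle,\langle b\rangle$ with $ab\in P$; for the converse it takes arbitrary ideals $I,J$ with $IJ\subseteq P$, supposes both conclusions fail, picks witnesses $i\in I$ and $j\in J$, and derives a contradiction from $ij\in P$ via Definition~\ref{use}. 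That element-picking step is exactly the content of the paper's own proof of (3)$\Rightarrow$(1) in Proposition~\ref{H}, so the two arguments share the same mathematical core and differ only in whether that work is cited or redone: your version buys brevity and reuse of established machinery, the paper's is self-contained. Two further points in your favour: first, your insistence that the ``for some $s\in S$'' in Proposition~\ref{H}(2) be read as the single fixed $s$ of Definition~\ref{NON} is exactly the quantifier discipline needed for the translation to Definition~\ref{use} to be legitimate (and needed for Proposition~\ref{H}'s own proof of (3)$\Rightarrow$(1) to make sense); second, your write-up keeps the roles of $\mathcal{J}(R)$ and $P$ straight, whereas the paper's proof of this proposition repeatedly writes $P$ in places where $\mathcal{J}(R)$ is clearly intended.
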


\begin{proof} Suppose $P$ is an  ideal of $R$ that satisfies  Definition \ref{NON}, and let $ab \in P$ for some $a, b\in R$, then,  $\langle a\rangle\langle b\rangle\subseteq P$. Since $S$ is  a multiplicatively closed subset of $R$, then $S$ is an $m$-system of  $R$. Thus, by Definition \ref{NON}, either $\langle a\rangle\langle s\rangle\subseteq P$ or $\langle b\rangle\langle s\rangle\subseteq P$ for some $s\in S$, and hence, either $as \in P$ or $bs \in P$. Now suppose $P$ satisfies  Definition \ref{use}, and let $IJ\subseteq P$ for some ideals $I,J$ of $R$.  If  $I\langle s\rangle\not\subseteq P$ and $J\langle s\rangle\not\subseteq P$ for all $s\in S$, then, there exist $i\in I$ and $j\in J$ such that  $i\langle s\rangle\not\subseteq P$ and $j\langle s\rangle\not\subseteq P$. However, $ij\in P$, thus, by Definition \ref{use}, either  $is \in P$ or $js\in P$ which implies either $i\langle s\rangle\subseteq P$ or $j\langle s\rangle\subseteq P$, contradiction. 
\end{proof}

\begin{corollary}\label{corollary} Let $P$ be an ideal of  $R$, with $P \cap S=\emptyset$. 
If $P$ is a right $S$-prime ideal such that $P\subseteq \mathcal{J}(R)$, then, $P$ is a right $S$-$\mathcal{J}$-ideal.
\end{corollary}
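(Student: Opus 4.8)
The plan is to unwind the two definitions and observe that the right $S$-prime condition is formally stronger than the right $S$-$\mathcal{J}$ condition once we know $P\subseteq\mathcal{J}(R)$. Recall that by Definition 2.1 of \cite{ref4}, saying $P$ is right $S$-prime means there is a \emph{fixed} $s\in S$ such that for all ideals $I,K$ of $R$ with $IK\subseteq P$ we have $I\langle s\rangle\subseteq P$ or $K\langle s\rangle\subseteq P$. On the other hand, by Definition \ref{NON}, $P$ is a right $S$-$\mathcal{J}$-ideal precisely when there is a fixed $s\in S$ such that for all ideals $I,K$ with $IK\subseteq P$ we have $I\langle s\rangle\subseteq\mathcal{J}(R)$ or $K\langle s\rangle\subseteq P$.

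First I would fix the element $s\in S$ witnessing that $P$ is right $S$-prime, and then claim that this \emph{same} $s$ witnesses that $P$ is right $S$-$\mathcal{J}$. The disjointness hypothesis $P\cap S=\emptyset$ is already assumed, so it transfers verbatim. Now take arbitrary ideals $I,K$ of $R$ with $IK\subseteq P$. Applying the right $S$-prime property with this $s$, I obtain that either $I\langle s\rangle\subseteq P$ or $K\langle s\rangle\subseteq P$.

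The key step is to upgrade the first of these two alternatives using the hypothesis $P\subseteq\mathcal{J}(R)$: if $I\langle s\rangle\subseteq P$, then by inclusion $I\langle s\rangle\subseteq\mathcal{J}(R)$, which is exactly the first alternative required in Definition \ref{NON}. The second alternative $K\langle s\rangle\subseteq P$ is identical in both definitions and needs no modification. Hence in either case the right $S$-$\mathcal{J}$ condition holds for $I,K$ with the chosen $s$, and since $I,K$ were arbitrary, $P$ is a right $S$-$\mathcal{J}$-ideal.

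I do not expect any real obstacle here, since the argument is a direct implication at the level of the defining conditions; the only point worth stating carefully is that the \emph{same} fixed $s$ works throughout, which causes no difficulty because the inclusion $P\subseteq\mathcal{J}(R)$ is independent of the pair $(I,K)$ and of $s$. This also parallels the commutative situation, where the analogous containment between $S$-prime and $S$-$\mathcal{J}$ ideals was recorded via Proposition \ref{1}.
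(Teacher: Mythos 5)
Your proof is correct, but it runs at a different level than the paper's. The paper does not argue from the ideal-pair definitions at all: it takes elements $a,b\in R$ with $aRb\subseteq P$, assumes $a\langle s\rangle\not\subseteq\mathcal{J}(R)$, deduces $a\langle s\rangle\not\subseteq P$ from the hypothesis $P\subseteq\mathcal{J}(R)$, invokes the element-wise form of right $S$-primeness to get $b\langle s\rangle\subseteq P$, and then appeals to the equivalence $(3)\Rightarrow(1)$ of Proposition \ref{H} to conclude that $P$ is right $S$-$\mathcal{J}$. You instead work directly with Definition \ref{NON} and the ideal-theoretic definition of right $S$-prime from \cite{ref4}: given $IK\subseteq P$, the $S$-prime alternative $I\langle s\rangle\subseteq P$ is upgraded to $I\langle s\rangle\subseteq\mathcal{J}(R)$ by the inclusion $P\subseteq\mathcal{J}(R)$, and the other alternative is already what Definition \ref{NON} asks for. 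The underlying idea is the same one-line upgrade, but your route has two advantages: it needs no auxiliary characterization, and in particular it does not rely on Proposition \ref{H}, which is stated only for rings \emph{with} identity --- so your argument is valid in the paper's standing setting of Section 3 (rings without identity), where the paper's appeal to Proposition \ref{H} is, strictly speaking, an extra hypothesis. What the paper's element-wise route buys in exchange is consistency with how the rest of Section 3 manipulates these ideals (via $x\langle s\rangle$ conditions on elements), which is the form used in the subsequent examples.
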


\begin{proof}
Let $aRb\subseteq P$ and assume $a\langle s\rangle\nsubseteq \mathcal{J} (R)$ for $a,b\in R$. Then, $a\langle s\rangle\nsubseteq P$. Since $P$ is  right $S$-prime, then $b\langle s\rangle\subseteq P$. Thus by $(3)$ of Proposition \ref{H}, $P$ is right $S$-$\mathcal{J}$.
\end{proof}

The condition $P\subseteq \mathcal{J}(R)$ in Corollary \ref{corollary} can not be omitted in general. In the following, we give  example to show this.  But first, it is good to know that  $\mathcal{J}(R[X]) = N[x]$, where $N = \mathcal{J}(R[x]) \cap R$,  see Lemma 2J of \cite{SAA}. In addition,  the prime radical of $R[x]$, $\beta(R[x])=\beta(R)[x]$, from Theorem 3 of \cite{SAA}. Moreover, in Artinian rings $\beta(R)=\mathcal{J}(R)$. 

\begin{example} \label{example}
Let $R=M_{n}\left(\mathbb{Z}_{36}\left[ x\right]\right).$  Since $\mathcal{J}(M_n(R))=M_n(\mathcal{J}(R))$, (see \cite{ref5} page 57), and $R$ is Artinian, then
{\small $$\mathcal{J}(R)=\mathcal{J}(M_{n}\left(\mathbb{Z}_{36}\left[ x\right]\right))=M_n(\mathcal{J}(\left(\mathbb{Z}_{36}\left[ x\right]\right))=M_n(\mathcal{J}\left(\mathbb{Z}_{36})\left[ x\right]\right))=M_n(\langle 6\rangle[x]))$$} 
 Consider the $m$-system: $S=\left\{ s,s^{2},s^{4},s^{8},...\right\}$, where $s=9(e_{11}+e_{22}+...+e_{nn})$, and $I=M_{n}\left( \left\langle \overline{9x}\right\rangle \right)$. It is clear that $S\cap I=\emptyset$ and $I\not\subseteq \mathcal{J}(R)$. Now let $\mathcal{N}_1R\mathcal{N}_2\subseteq I$, for some matrices $\mathcal{N}_1,\mathcal{N}_2$ in $R$. Then, there exist some matrices $\mathcal{I}_1,\mathcal{I}_2$, such that  either $\mathcal{N}_1=x\mathcal{I}_1$ or $\mathcal{N}_2=x\mathcal{I}_2$. Hence, either $\mathcal{N}_1\langle s\rangle\subseteq I$ or $\mathcal{N}_2\langle s\rangle\subseteq I$. Thus $I$ is  right $S$-prime by Proposition 2.7 of \cite{ref1}. However, by taking the ideals $A_1=M_{n}(\left\langle {3x}\right\rangle ), A_2=M_{n}(\left\langle {3}\right\rangle)$, we have  $A_1A_2\subseteq I$, however, $A_1\langle s^{\prime}\rangle\not\subseteq \mathcal{J}(R)$ and $A_2\langle s^{\prime}\rangle\not\subseteq I$, for each $s^{\prime}\in S$. Hence, $I$ is not a right $S$-$\mathcal{J}$-ideal.
\end{example}

 The following example presents an ideal which is  right $S$-$\mathcal{J}$, but  not  a $\mathcal{J}$-ideal.

\begin{example}\label{final}  
Let $R=M_{2}(\mathbb{Z}_{12})$. Let
\[
S_{R}=\Bigg\{\left[ \begin{array}{cc}s& 0\\ 0 & s\end{array} \right ]; s\in S=\{1, 3, 9\} \Bigg\}, \text{ and  } s^{\prime}=\left[ \begin{array}{cc}3& 0\\ 0 & 3\end{array} \right ].
\]
Then,  $S_{R}$ is an $m$-system of the ring $R$. Consider the ideal $P=M_{2}(\langle 4\rangle)$. Then,  $\mathcal{J}(R)=M_n(\langle 6\rangle)$, and $P\cap S_{R}=\emptyset$. Since 

\[
\left[ \begin{array}{cc}2& 0\\ 0 & 2\end{array} \right ]R\left[ \begin{array}{cc}2& 0\\ 0 & 2\end{array} \right ]\subseteq P \text{ and } \left[ \begin{array}{cc}2& 0\\ 0 & 2\end{array} \right ]\not\in\mathcal{J}(R) , \left[ \begin{array}{cc}2& 0\\ 0 & 2\end{array} \right ]\not\in P,
\]
then, $P$ is not   $\mathcal{J}$-ideal.  Now assume $\mathcal {N}_{1}R\mathcal {N}_{2}\subseteq P$, for some matrices  $\mathcal {N}_{1}, \mathcal {N}_{2}\in R$. If $\mathcal{N}_{2}\langle s^{\prime}\rangle\subseteq P$, then, $P$ is right $S_R$-$\mathcal{J}$. Thus, assume $\mathcal{N}_{2}\langle s^{\prime}\rangle\not\subseteq P$.  Then, $\mathcal {N}_{1}R\mathcal {N}_{2}\subseteq M_{2}(\langle 2\rangle)=J$, and since $J$ is prime, then either $\mathcal {N}_{1}\in J$ or  $\mathcal {N}_{2}\in J$, now we discuss two cases as the following. 

If $\mathcal {N}_{1}\in J$, then, $\mathcal {N}_{1}\langle  s^{\prime}\rangle\subseteq\mathcal{J}(R)$, and again $P$ is right $S_R$-$\mathcal{J}$. 

If $\mathcal {N}_{1}\not\in J$, then at least one of the entries of $\mathcal {N}_{1}$ is an odd number and $\mathcal {N}_{2}\in J$, hence, 
\[
\mathcal {N}_{2}=\left[ \begin{array}{cc}2k_1& 2k_2\\ 2k_3 & 2k_4\end{array} \right ], \text{ where } k_i \in \mathbb{Z}^+ \text{ for } i\in\{1, 2, 3, 4\}. 
\]
 But by the initial assumption $\mathcal{N}_{2}\langle s^{\prime}\rangle\not\subseteq P$, thus there exists $$\mathcal {I}=\left[ \begin{array}{cc}3\alpha_1& 3\alpha_2\\ 3\alpha_3 & 3\alpha_4\end{array} \right ]\in \langle s^{\prime}\rangle,$$ where $\alpha_i \in \mathbb{Z}^+$,  for  $i\in\{1, 2, 3, 4\}$,   such that $\mathcal {N}_{2}\mathcal {I}\not\in P=M_{2}(\langle 4\rangle)$, which means that at least one of the entries  of $\mathcal {N}_{2}\mathcal {I}$ is not  a multiple of $4$ in $\mathbb{Z}_{12}$. Without loss of generality, suppose that $6(k_1\alpha_1+k_2\alpha_3)$ is not a multiple of $4$, then, either $k_1$ or/and $k_2$ is/are odd. Thus, at least one of $k_i$ must be odd for $i\in\{1, 2, 3, 4\}$, consequently, at least one of the even entries of  $\mathcal {N}_{2}$ is not a multiple of $4$. By considering all the cases of the positions of the odd entries  of $\mathcal {N}_{1}$ and the not  multiple of $4$ entries of $\mathcal {N}_{2}$, we can find, in each case, an element $\mathcal {M} $ of $R$, such that $\mathcal {N}_{1}\mathcal {M}\mathcal {N}_{2}\not\in P$, i.e., $\mathcal {N}_{1}R\mathcal {N}_{2}\not\subseteq P$, which is a contradiction. That is why $\mathcal {N}_{1}$ must be an element of $J$, and thus, $\mathcal {N}_{1}\langle  s^{\prime}\rangle\subseteq\mathcal{J}(R)$. 
\end{example}

\begin{theorem}
If $R$ has an identity, and $P\lhd R$ with $P \cap S=\emptyset$, then,  for some $s\in S$,  $(P:\langle s\rangle)$ is a right $S$-$\mathcal{J}$-ideal, if and only if, $P$ is right $S$-$\mathcal{J}$.
\end{theorem}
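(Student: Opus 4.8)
The plan is to treat this as the noncommutative analogue of Theorem \ref{POWER} and to prove both implications directly from the ideal formulation of Definition \ref{NON}, working with the right colon $(P:\langle s\rangle)=\{x\in R: x\langle s\rangle\subseteq P\}$. First I would record three preliminary facts used throughout. Since $P$ and $\langle s\rangle$ are two-sided ideals, $(P:\langle s\rangle)$ is again a two-sided ideal of $R$, and $P\subseteq(P:\langle s\rangle)$ because $P\langle s\rangle\subseteq P$. Moreover $(P:\langle s\rangle)$ is disjoint from $S$: if some $t\in S$ lay in it, then $t\langle s\rangle\subseteq P$, while the $m$-system property yields $r\in R$ with $trs\in S$, and $trs\in t\langle s\rangle\subseteq P$ would contradict $P\cap S=\emptyset$. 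These observations guarantee that $(P:\langle s\rangle)$ is a legitimate candidate for being a right $S$-$\mathcal{J}$-ideal.

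For the implication ``$P$ right $S$-$\mathcal{J}$ $\Rightarrow$ $(P:\langle s\rangle)$ right $S$-$\mathcal{J}$'', I would take the element $s_0\in S$ witnessing that $P$ is right $S$-$\mathcal{J}$ and show that the same $s_0$ witnesses $(P:\langle s_0\rangle)$. Given ideals $I,K$ with $IK\subseteq(P:\langle s_0\rangle)$, one has $I\bigl(K\langle s_0\rangle\bigr)\subseteq P$, and since $K\langle s_0\rangle$ is a two-sided ideal, Definition \ref{NON} applied to $P$ gives either $I\langle s_0\rangle\subseteq\mathcal{J}(R)$ or $K\langle s_0\rangle\langle s_0\rangle\subseteq P$. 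The first alternative is already the desired conclusion, and the second says precisely that $K\langle s_0\rangle\subseteq(P:\langle s_0\rangle)$. Thus $(P:\langle s_0\rangle)$ is right $S$-$\mathcal{J}$ with witness $s_0$, and this direction is routine.

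For the converse, suppose $(P:\langle s\rangle)$ is right $S$-$\mathcal{J}$ with witness $t\in S$. For ideals $I,K$ with $IK\subseteq P$, the inclusion $P\subseteq(P:\langle s\rangle)$ gives $IK\subseteq(P:\langle s\rangle)$, so either $I\langle t\rangle\subseteq\mathcal{J}(R)$ or $K\langle t\rangle\subseteq(P:\langle s\rangle)$, the latter meaning $K\langle t\rangle\langle s\rangle\subseteq P$. The obstacle here is that these two alternatives a priori carry different ``witnesses'' ($t$ versus the composite $\langle t\rangle\langle s\rangle$), whereas Definition \ref{NON} demands a single fixed element. I would resolve this by invoking the $m$-system property once for the fixed pair $(t,s)$ to obtain $r\in R$ with $w:=trs\in S$, and then observe that $\langle w\rangle\subseteq\langle t\rangle$ (since $w\in RtR$) and simultaneously $\langle w\rangle\subseteq\langle t\rangle\langle s\rangle$ (since $RtrsR\subseteq RtRsR$). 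Consequently the first alternative upgrades to $I\langle w\rangle\subseteq\mathcal{J}(R)$ and the second to $K\langle w\rangle\subseteq P$, so the single element $w$ witnesses that $P$ is right $S$-$\mathcal{J}$.

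The main technical point, and the step I expect to require the most care, is exactly this collapsing of witnesses in the converse: one must confirm that the ideal generated by the single $m$-system element $w=trs$ sits inside both $\langle t\rangle$ and $\langle t\rangle\langle s\rangle$, which is what lets one $w$ serve both alternatives. The remaining verifications --- that $(P:\langle s\rangle)$ is a two-sided ideal disjoint from $S$, and the bookkeeping that $K\langle s_0\rangle\langle s_0\rangle\subseteq P$ is equivalent to $K\langle s_0\rangle\subseteq(P:\langle s_0\rangle)$ --- are direct consequences of $P$ and $\langle s\rangle$ being two-sided together with the presence of an identity.
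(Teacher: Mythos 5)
Your proof is correct, but it does not follow the paper's route in both directions. The paper argues element-wise via condition $(3)$ of Proposition \ref{H}: in each direction it starts from $xRy\subseteq P$ (resp.\ $xRy\subseteq(P:\langle s\rangle)$) and merges the two competing witnesses by one application of the $m$-system property, forming $s'=s_1rs$ (resp.\ $s'=srs_1$) --- exactly your $w=trs$ trick. So your ``$(P:\langle s\rangle)$ right $S$-$\mathcal{J}$ $\Rightarrow$ $P$ right $S$-$\mathcal{J}$'' direction coincides with the paper's, up to being phrased with ideals rather than elements. Your other direction is genuinely different and cleaner: instead of taking $s$ arbitrary and combining its witness with that of $P$, you take the colon at $P$'s own witness $s_0$ and exploit the tautology $K\langle s_0\rangle\langle s_0\rangle\subseteq P \iff K\langle s_0\rangle\subseteq(P:\langle s_0\rangle)$, so no $m$-system step is needed at all. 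What this buys is a shorter argument with one fewer appeal to the $m$-system axiom; what it costs is generality: you only conclude that the particular colon $(P:\langle s_0\rangle)$ is right $S$-$\mathcal{J}$, whereas the paper's computation shows $(P:\langle s\rangle)$ is right $S$-$\mathcal{J}$ for every $s\in S$. Since the theorem asserts only ``for some $s\in S$,'' your version fully proves the statement. A further point in your favour: you verify that $(P:\langle s\rangle)$ is a two-sided ideal disjoint from $S$, which Definition \ref{NON} requires of any right $S$-$\mathcal{J}$-ideal and which the paper's proof never checks; your observation that $t\langle s\rangle\subseteq P$ would force $trs\in P\cap S$ closes that small gap.
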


\begin{proof} Suppose $(P:\langle s\rangle)$ is a right $S$-$\mathcal{J}$-ideal, and let  $xRy \subseteq P$ for some $x, y \in R$, then,  $xRy \subseteq (P:\langle s\rangle)$, hence by assumption, either $x\langle s_1\rangle\subseteq \mathcal{J} (R)$ or $y\langle s_1\rangle\subseteq (P:\langle s\rangle)$ for some $s_1\in S$. If $y\langle s_1\rangle\subseteq (P:\langle s\rangle)$, then,  $y\langle s_1\rangle\langle s\rangle\subseteq P$, hence there exists $s^{\prime}=s_1rs$ for some $r\in R$ such that $y\langle s^{\prime}\rangle\subseteq y\langle s_1\rangle\langle s\rangle \subseteq P$. If  $x\langle s_1\rangle \subseteq \mathcal{J} (R)$, then similarly, we find $x\langle s^{\prime}\rangle \subseteq \mathcal{J} (R)$. Thus,  $P$ is right $S$-$\mathcal{J}$. Conversely, suppose  $P$ is right $S$-$\mathcal{J}$, and let  $xRy \subseteq (P:\langle s\rangle)$ for some $x, y \in R$, then,  $\langle x\rangle\langle y\rangle\langle s\rangle \subseteq P$, hence, by assumption,  either $\langle x\rangle\langle s_1\rangle\subseteq \mathcal{J} (R)$ or $\langle y\rangle\langle s\rangle\langle s_1\rangle\subseteq P$ for some $s_1\in S$. If $\langle y\rangle\langle s\rangle\langle s_1\rangle\subseteq P$, then,   $\langle y\rangle\langle s\rangle\langle s_1\rangle\subseteq (P:\langle s\rangle)$, hence there exists $s^{\prime}=srs_1$ for some $r\in R$ such that $y\langle s^{\prime}\rangle\subseteq\langle y\rangle\langle s\rangle\langle s_1\rangle\subseteq (P:\langle s\rangle)$. If  $\langle x\rangle\langle s_1\rangle \subseteq \mathcal{J}(R)$, then similarly, we find $x\langle s^{\prime}\rangle \subseteq \mathcal{J}(R)$. Thus, $(P:\langle s\rangle)$ is right $S$-$\mathcal{J}$.
\end{proof}

\begin{proposition}
Suppose $R$ has an identity, and $P\lhd R$ such that $P \cap S=\emptyset$. If for some $s\in S$,  $(P:\langle s\rangle)$ is a $\mathcal{J}$-ideal, then, $P$ is right $S$-$\mathcal{J}$.
\end{proposition}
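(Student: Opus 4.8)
The plan is to verify the element-wise characterization of right $S$-$\mathcal{J}$-ideals furnished by part $(3)$ of Proposition \ref{H}, using the single fixed $s \in S$ for which $(P:\langle s\rangle)$ is assumed to be a $\mathcal{J}$-ideal. Since $R$ has an identity and $P \cap S = \emptyset$, Proposition \ref{H} is available, so it suffices to show that whenever $xRy \subseteq P$ for $x, y \in R$, either $x\langle s\rangle \subseteq \mathcal{J}(R)$ or $y\langle s\rangle \subseteq P$. Before starting I would record the inclusion $P \subseteq (P:\langle s\rangle)$, which holds because $P$ is a two-sided ideal and hence $p\langle s\rangle \subseteq P$ for every $p \in P$; I would also note that $\langle s\rangle$ being two-sided makes $(P:\langle s\rangle) = \{z \in R : z\langle s\rangle \subseteq P\}$ a genuine two-sided ideal, so that Definition 5.1 of \cite{ref1} may legitimately be applied to it.

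With these preliminaries in place, take $x, y \in R$ with $xRy \subseteq P$. By the inclusion above, $xRy \subseteq (P:\langle s\rangle)$. Since $(P:\langle s\rangle)$ is a $\mathcal{J}$-ideal, its defining property in element form (Definition 5.1 of \cite{ref1}, applied to $a_1 = x$ and $a_2 = y$) forces $x \in \mathcal{J}(R)$ or $y \in (P:\langle s\rangle)$. In the first case, since $\mathcal{J}(R)$ is an ideal, $x\langle s\rangle \subseteq xR \subseteq \mathcal{J}(R)$; in the second case, $y \in (P:\langle s\rangle)$ means exactly $y\langle s\rangle \subseteq P$. Either way the required dichotomy holds with the one fixed $s$, so Proposition \ref{H}$(3)$ yields that $P$ is right $S$-$\mathcal{J}$.

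This argument is the noncommutative transcription of the (easy) forward implication of Theorem \ref{POWER}, and I expect no genuine obstacle. The only points demanding care are bookkeeping with the two-sided generated ideal $\langle s\rangle$ and the colon $(P:\langle s\rangle)$: one must check that $x \in \mathcal{J}(R)$ really yields $x\langle s\rangle \subseteq \mathcal{J}(R)$, and that the \emph{same} $s$ serves uniformly for all $x, y$ so that the ``fixed $s$'' requirement in Definition \ref{NON} is respected. It is worth remarking that, unlike the preceding theorem where composing the witnesses $s_1$ and $s$ forced us to invoke the $m$-system property of $S$, here the $m$-system structure is not needed beyond the membership $s \in S$.
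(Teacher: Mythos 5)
Your proof is correct and follows essentially the same route as the paper's: both pass from $xRy \subseteq P$ to $xRy \subseteq (P:\langle s\rangle)$, apply the element-wise $\mathcal{J}$-ideal property of $(P:\langle s\rangle)$, and conclude via Proposition \ref{H}$(3)$. Your extra bookkeeping (checking that $(P:\langle s\rangle)$ is a two-sided ideal and that the same fixed $s$ works throughout) is a welcome refinement but not a different argument.
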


\begin{proof} Suppose $(P:\langle s\rangle)$ is a $\mathcal{J}$-ideal, and let  $xRy \subseteq P$ for some $x, y \in R$, then,  $xRy \subseteq (P:\langle s\rangle)$, hence by assumption, either $x \in \mathcal{J}(R)$ or $y\in (P:\langle s\rangle)$. If  $x \in \mathcal{J}(R)$, then $x\langle s\rangle \subseteq \mathcal{J}(R)$. If $y\in (P:\langle s\rangle)$, then,  $y\langle s\rangle\subseteq P$. Thus, by $(3)$ of Proposition \ref{H}, $P$ is right $S$-$\mathcal{J}$.
\end{proof}

The following proposition shows the converse of the above proposition holds when $S$ is contained in the center of the ring $R$ ($C(R)$), and  $(\mathcal{J}(R):\langle s\rangle)$ is a $\mathcal{J}$-ideal, for some $s\in S$. 

\begin{proposition}
Let $P$ be an ideal of a ring $R$ with identity,  $S$ be an $m$-system of $R$ such that $S\subseteq C(R)$, and $(\mathcal{J}(R):\langle s\rangle)$ is a $\mathcal{J}$-ideal, disjoint from $S$, for some $s\in S$. If $P$ is right $S$-$\mathcal{J}$, then, $(P:\langle s\rangle)$ is a $\mathcal{J}$-ideal.
\end{proposition}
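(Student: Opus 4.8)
The plan is to verify directly that $Q:=(P:\langle s\rangle)$ satisfies the noncommutative $\mathcal{J}$-ideal condition of Definition 5.1 of \cite{ref1}, working throughout with the single fixed $s\in S$ appearing in the statement (the witness for both $P$ being right $S$-$\mathcal{J}$ and for $(\mathcal{J}(R):\langle s\rangle)$ being a $\mathcal{J}$-ideal). First I would record the two consequences of the centrality hypothesis $S\subseteq C(R)$ that make the argument run. Because $s$ is central and $R$ has an identity, $\langle s\rangle=sR=Rs$, so for any $r\in R$ one has $r\langle s\rangle\subseteq P$ if and only if $rs\in P$; hence $Q=\{r\in R:rs\in P\}$ is a genuine two-sided ideal, and it is proper, since $1\in Q$ would force $s\in P$, contradicting $P\cap S=\emptyset$. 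The identical computation gives $(\mathcal{J}(R):\langle s\rangle)=\{r\in R:rs\in\mathcal{J}(R)\}$, which by hypothesis is a $\mathcal{J}$-ideal disjoint from $S$.

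Next I would take $a_1,a_2\in R$ with $a_1Ra_2\subseteq Q$ and $a_1\notin\mathcal{J}(R)$, aiming to show $a_2\in Q$. Unwinding the colon, $a_1ra_2\in Q$ means $a_1ra_2s\in P$ for every $r\in R$, and centrality of $s$ lets me slide it to the front: $a_1ra_2s=a_1rsa_2=a_1sra_2=(a_1s)ra_2$. Thus $(a_1s)Ra_2\subseteq P$, and since $P$ is right $S$-$\mathcal{J}$, part $(3)$ of Proposition \ref{H} applies to give either $a_2\langle s\rangle\subseteq P$ or $(a_1s)\langle s\rangle\subseteq\mathcal{J}(R)$. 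In the first case $a_2s\in a_2\langle s\rangle\subseteq P$, i.e.\ $a_2\in Q$, and we are finished.

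The main work is to rule out the second alternative, and this is the step I expect to be the crux. Here I would show that $(a_1s)\langle s\rangle\subseteq\mathcal{J}(R)$ forces $a_1Rs\subseteq(\mathcal{J}(R):\langle s\rangle)$: the hypothesis yields $a_1s^2\in\mathcal{J}(R)$, and for any $r\in R$, using that $s^2$ is central, $a_1rs^2=a_1s^2r\in\mathcal{J}(R)$ because $\mathcal{J}(R)$ is an ideal, so $a_1rs\in(\mathcal{J}(R):\langle s\rangle)$. Applying the $\mathcal{J}$-ideal property of $(\mathcal{J}(R):\langle s\rangle)$ to the containment $a_1Rs\subseteq(\mathcal{J}(R):\langle s\rangle)$ together with $a_1\notin\mathcal{J}(R)$ then forces $s\in(\mathcal{J}(R):\langle s\rangle)$, contradicting the disjointness of $(\mathcal{J}(R):\langle s\rangle)$ from $S$. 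Hence the second case is impossible, we must have $a_2\in Q$, and $Q=(P:\langle s\rangle)$ is a $\mathcal{J}$-ideal. The delicate point throughout is precisely this conversion of the ``bad'' alternative $(a_1s)\langle s\rangle\subseteq\mathcal{J}(R)$ into a containment of the form $a_1R(\,\cdot\,)\subseteq(\mathcal{J}(R):\langle s\rangle)$ to which the $\mathcal{J}$-ideal hypothesis can be fed, and it is exactly here that both the centrality of $S$ (to commute the factors of $s$) and the disjointness of $(\mathcal{J}(R):\langle s\rangle)$ from $S$ are indispensable.
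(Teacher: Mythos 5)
Your proof is correct, and it takes a genuinely different route from the paper's. (Like the paper, you read the statement as saying the same $s$ witnesses both hypotheses; that matches the paper's own proof.) The paper first invokes Proposition 1.5 of \cite{ref1} to collapse the colon ideal: since $(\mathcal{J}(R):\langle s\rangle)$ is a $\mathcal{J}$-ideal it is contained in $\mathcal{J}(R)$, whence $(\mathcal{J}(R):\langle s\rangle)=\mathcal{J}(R)$. It then keeps $s$ attached to the \emph{second} factor, writing $\langle x\rangle\bigl(\langle y\rangle\langle s\rangle\bigr)\subseteq P$, and applies the right $S$-$\mathcal{J}$ property of $P$ \emph{twice}: once to this pair (the alternative $\langle x\rangle\langle s\rangle\subseteq\mathcal{J}(R)$ finishes immediately via the collapsed colon), and, in the remaining case $\langle y\rangle\langle s\rangle^{2}\subseteq P$, again to the pair $\langle s\rangle^{2},\langle y\rangle$ after commuting, where the unwanted alternative $\langle s\rangle^{3}\subseteq\mathcal{J}(R)$ is ruled out because the $m$-system supplies an element $srs\in S$ lying in $\mathcal{J}(R)=(\mathcal{J}(R):\langle s\rangle)$, contradicting disjointness. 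You instead attach $s$ to the \emph{first} factor, $(a_1s)Ra_2\subseteq P$, so the good alternative of Proposition \ref{H}(3) is already the desired conclusion $a_2\in(P:\langle s\rangle)$, and you dispose of the bad alternative $(a_1s)\langle s\rangle\subseteq\mathcal{J}(R)$ by converting it (via centrality) to $a_1Rs\subseteq(\mathcal{J}(R):\langle s\rangle)$ and feeding that to the defining property of the $\mathcal{J}$-ideal $(\mathcal{J}(R):\langle s\rangle)$, with its disjointness from $S$ providing the contradiction. What your route buys: only one application of the $S$-$\mathcal{J}$ hypothesis, no appeal to Proposition 1.5 of \cite{ref1}, and no use of the $m$-system axiom (you need only the single element $s$ and disjointness). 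What the paper's route buys: the explicit identity $(\mathcal{J}(R):\langle s\rangle)=\mathcal{J}(R)$, which makes the passage between ``$\langle s\rangle$-scaled'' membership and plain membership in $\mathcal{J}(R)$ transparent and is the same device the paper reuses in nearby results. Both arguments use the centrality of $s$ and the presence of an identity in the same essential way.
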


\begin{proof}  Since $(\mathcal{J}(R):\langle s\rangle)$ is a $\mathcal{J}$-ideal, then, by Proposition 1.5 of \cite{ref1},  $(\mathcal{J}(R):\langle s\rangle)\subseteq \mathcal{J}(R)$, and hence, $(\mathcal{J}(R):\langle s\rangle)= \mathcal{J}(R)$. Let $xRy \subseteq (P:\langle s\rangle)$ for some $x, y\in R$, then, $\langle x\rangle\langle y\rangle\langle s\rangle\subseteq P$, and by assumption, either  $\langle x\rangle\langle s\rangle\subseteq\mathcal{J}(R)$ or $\langle y\rangle\langle s\rangle\langle s\rangle\subseteq P$. If   $\langle x\rangle\langle s\rangle\subseteq\mathcal{J}(R)$, then, $x\in (\mathcal{J}(R):\langle s\rangle)=\mathcal{J}(R)$. If $\langle y\rangle\langle s\rangle\langle s\rangle\subseteq P$, then, $\langle s\rangle\langle s\rangle\langle y\rangle\subseteq P$, and again either $\langle s\rangle^3\subseteq \mathcal{J}(R)$, which implies for some $r\in R$, $s^{\prime}=srs\in \langle s\rangle^2\subseteq (\mathcal{J}(R):\langle s\rangle)=\mathcal{J}(R)$, contradiction, or $\langle y\rangle\langle s\rangle\subseteq P$, which implies $y\in (P:\langle s\rangle)$. Thus, $(P:\langle s\rangle)$ is a $\mathcal{J}$-ideal.
\end{proof}

\begin{theorem} Let $f$: $R_1\to R_2$ be a ring epimorphism, and $P$ be  a right $S$-$\mathcal{J}$-ideal of $R_1$ such that
$\text{\rm Ker}(f)\subseteq P$. Then $f(P)$ is a right $f(S)$-$\mathcal{J}$-ideal of $R_2$.
\end{theorem}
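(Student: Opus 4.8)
The plan is to mirror the proof of Theorem \ref{teo}$(1)$ but to argue entirely at the level of ideals, since in this section $R$ need not have an identity and Definition \ref{NON} is phrased in terms of products of ideals. First I would record the three preliminary facts that make $f(P)$ eligible to be a right $f(S)$-$\mathcal{J}$-ideal: (i) $f(P)$ is an ideal of $R_2$ because $f$ is surjective; (ii) $f(S)$ is an $m$-system of $R_2$, which follows from surjectivity, since for $f(a),f(b)\in f(S)$ (with $a,b\in S$) there is $r\in R_1$ with $arb\in S$, whence $f(a)f(r)f(b)=f(arb)\in f(S)$; and (iii) $f(P)\cap f(S)=\emptyset$, proved exactly as in Theorem \ref{teo}: if $f(a)=f(s)$ with $a\in P$, $s\in S$, then $a-s\in\mathrm{Ker}(f)\subseteq P$ forces $s\in P$, contradicting $P\cap S=\emptyset$.

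For the main step, I would start from arbitrary ideals $A,B$ of $R_2$ with $AB\subseteq f(P)$ and pull them back: set $I=f^{-1}(A)$ and $K=f^{-1}(B)$, which are ideals of $R_1$ containing $\mathrm{Ker}(f)$ and satisfy $f(I)=A$, $f(K)=B$ by surjectivity. Because $f$ is a surjective homomorphism it carries the ideal product onto the ideal product, so $f(IK)=f(I)f(K)=AB\subseteq f(P)$; combined with $\mathrm{Ker}(f)\subseteq P$ this upgrades to $IK\subseteq P$, since for $z\in IK$ we have $f(z)=f(p)$ for some $p\in P$, hence $z-p\in\mathrm{Ker}(f)\subseteq P$. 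Now the hypothesis that $P$ is right $S$-$\mathcal{J}$ supplies a fixed $s\in S$ with either $I\langle s\rangle\subseteq\mathcal{J}(R_1)$ or $K\langle s\rangle\subseteq P$, and I would push this dichotomy forward with $t=f(s)\in f(S)$.

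The two cases then close by applying $f$ and using $f(\langle s\rangle)=\langle f(s)\rangle=\langle t\rangle$ (valid since $f$ is onto, so the ideal generated by $s$ maps onto the ideal generated by $f(s)$). In the first case $A\langle t\rangle=f(I)f(\langle s\rangle)=f\bigl(I\langle s\rangle\bigr)\subseteq f(\mathcal{J}(R_1))\subseteq\mathcal{J}(R_2)$; in the second case $B\langle t\rangle=f\bigl(K\langle s\rangle\bigr)\subseteq f(P)$. Either way the defining condition of Definition \ref{NON} holds for $f(P)$ with the single fixed element $t$, so $f(P)$ is a right $f(S)$-$\mathcal{J}$-ideal.

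I expect the only genuine subtlety---the part I would be most careful about---to be the two push-forward facts in the noncommutative, identity-free setting: that a surjective homomorphism sends the Jacobson radical into the Jacobson radical, $f(\mathcal{J}(R_1))\subseteq\mathcal{J}(R_2)$ (already invoked in Theorem \ref{teo}, and provable by viewing each simple $R_2$-module as a simple $R_1$-module via $f$), and that $f$ commutes with forming ideal products and generated ideals. Everything else is the same formal bookkeeping as in the commutative Theorem \ref{teo}$(1)$; the containment $\mathrm{Ker}(f)\subseteq P$ is precisely what lets me reflect $AB\subseteq f(P)$ back to $IK\subseteq P$ without loss.
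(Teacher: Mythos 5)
Your proof is correct, and it takes a route that differs from the paper's in a way worth noting. The paper argues element-wise: it lifts $x,y\in R_2$ with $xR_2y\subseteq f(P)$ to $c,d\in R_1$, deduces $cR_1d\subseteq P$ from $\mathrm{Ker}(f)\subseteq P$, applies the element-wise form of $P$'s property, and pushes forward by contradiction --- in other words, it works throughout with the $xRy$-criterion of Proposition \ref{H}(3) rather than with Definition \ref{NON} itself. You instead stay at the level of ideals: you pull back ideals $A,B$ of $R_2$ to $f^{-1}(A), f^{-1}(B)$, verify $f(I)f(K)=f(IK)$ and $f(\langle s\rangle)=\langle f(s)\rangle$, and apply Definition \ref{NON} directly. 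What your version buys is generality and consistency with the section's conventions: Proposition \ref{H} is stated only for rings \emph{with} identity, while this section's standing assumption is rings without identity, so the paper's appeal to the element-wise criterion tacitly presupposes an identity; your argument needs no such equivalence. What the paper's version buys is brevity --- it avoids the bookkeeping identities about images of ideal products and generated ideals that you rightly single out as the points needing care. Both proofs ultimately rest on the same key fact, $f(\mathcal{J}(R_1))\subseteq\mathcal{J}(R_2)$ for surjective $f$; the paper justifies it by citing that $\mathcal{J}$ is a complete, idempotent Hoehnke radical, whereas your justification via simple modules is the unital-ring argument and would need to be rephrased (e.g., via modular maximal ideals or the radical-theoretic property) in the identity-free setting --- a minor repair, since the fact itself is standard in both settings. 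One small point in your favor: you verify $f(P)\cap f(S)=\emptyset$, a check the paper's proof of this theorem omits (it appears only in the commutative analogue, Theorem \ref{teo}).
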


\begin{proof} 
Let $xR_2y\subseteq f(P)$ and $x\langle f(s)\rangle\not\subseteq \mathcal{J} (R_2)$ for $x,y\in R_2$. Since $f$ is an epimorphism, there exists $c,d\in R_1$ such that $f(c)=x$ and $f(d)=y$. Thus, $xR_2y=f(c)f(R_1)f(d)=f(cR_1d)\subseteq f(P)$. Since $\text{\rm Ker}(f)\subseteq P$ we obtain $cR_1d\subseteq P$. Since $P$ is  $S$-$\mathcal{J}$ of $R_1$, then, either $c\langle s\rangle\subseteq \mathcal{J} (R_1)$ or $d\langle s\rangle\subseteq P$ for some $s\in S$. If $c\langle s\rangle\subseteq \mathcal{J} (R_1)$, then, since $\mathcal{J}$ is a complete, idempotent Hoehnke radical, we have  $f(\mathcal{J} (R_1))\subseteq \mathcal{J} (R_2)$. Hence, $x\langle f(s)\rangle =f(c)\langle f(s)\rangle\subseteq f(\mathcal{J} (R_1))\subseteq \mathcal{J} (R_2)$ that is a contradiction. Hence, $c\langle s\rangle\not\subseteq \mathcal{J} (R_1)$ and thus,  $d\langle s\rangle\subseteq P$. Since $y=f(d)$, then $y\langle f(s)\rangle\subseteq f(P)$, consequently,  $f(P)$ is a right $f(S)$-$\mathcal{J}$-ideal of $R_2$.
\end{proof}

\begin{theorem} 
Let $f$: $R_1\to R_2$ be a ring epimorphism, and $P$ be an ideal of $R$ such that $\text{\rm Ker}(f)\subseteq P\cap\mathcal{J}(R)$ and  $P \cap S=\emptyset$.
If $f(P)$ is a right $f(S)$-$\mathcal{J}$  of $R_2$, then $P$ is a right  $S$-$\mathcal{J}$ of $R_1$.
\end{theorem}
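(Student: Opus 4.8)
The plan is to mirror the converse direction of the commutative Theorem \ref{teo}$(2)$, but working directly with the ideal formulation of Definition \ref{NON}, since in this section $R_1$ need not have an identity and so Proposition \ref{H} is unavailable. The disjointness requirement $P\cap S=\emptyset$ is part of the hypothesis, so it remains only to verify the product condition. First I would fix an element $s\in S$ such that $f(s)$ witnesses that $f(P)$ is right $f(S)$-$\mathcal{J}$; all of the later conclusions are to be stated for this same fixed $s$.

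Next I take arbitrary ideals $I,K$ of $R_1$ with $IK\subseteq P$ and push them forward. Because $f$ is an epimorphism, $f(I)$ and $f(K)$ are ideals of $R_2$, and the elementary identities $f(IK)=f(I)f(K)$ and $f(\langle s\rangle)=\langle f(s)\rangle$ (both valid for surjective $f$) give $f(I)f(K)=f(IK)\subseteq f(P)$. Applying the right $f(S)$-$\mathcal{J}$ property of $f(P)$ to the pair $f(I),f(K)$ then yields, for our fixed $s$, either $f(I)\langle f(s)\rangle\subseteq\mathcal{J}(R_2)$ or $f(K)\langle f(s)\rangle\subseteq f(P)$, which rewrite via the same identities as $f(I\langle s\rangle)\subseteq\mathcal{J}(R_2)$ or $f(K\langle s\rangle)\subseteq f(P)$, respectively.

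It then remains to pull these two inclusions back along $f$. For the second, $\text{Ker}(f)\subseteq P$ gives $f^{-1}(f(P))=P$, so $f(K\langle s\rangle)\subseteq f(P)$ forces $K\langle s\rangle\subseteq P$. For the first I would use the hypothesis $\text{Ker}(f)\subseteq\mathcal{J}(R_1)$ together with the isomorphism $R_2\cong R_1/\text{Ker}(f)$: the factor property of the Jacobson radical (valid for associative rings without identity) gives $\mathcal{J}(R_1/\text{Ker}(f))=\mathcal{J}(R_1)/\text{Ker}(f)$, whence $f^{-1}(\mathcal{J}(R_2))=\mathcal{J}(R_1)$; therefore $f(I\langle s\rangle)\subseteq\mathcal{J}(R_2)$ forces $I\langle s\rangle\subseteq\mathcal{J}(R_1)$. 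In either case the required alternative holds for the same fixed $s$, so $P$ is right $S$-$\mathcal{J}$.

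The one genuinely nontrivial step is establishing $f^{-1}(\mathcal{J}(R_2))=\mathcal{J}(R_1)$. The inclusion $\mathcal{J}(R_1)\subseteq f^{-1}(\mathcal{J}(R_2))$ is automatic, being equivalent to the image inclusion $f(\mathcal{J}(R_1))\subseteq\mathcal{J}(R_2)$ already invoked in the preceding theorem; the reverse inclusion is exactly where the hypothesis $\text{Ker}(f)\subseteq\mathcal{J}(R_1)$ is consumed, and it is the noncommutative analogue of the maximal-ideal argument used in Theorem \ref{teo}$(2)$. I expect this to be the main obstacle: in the identity-free setting one must justify the factor property $\mathcal{J}(R/A)=\mathcal{J}(R)/A$ for $A\subseteq\mathcal{J}(R)$ from radical-theoretic principles (the hereditariness of the Jacobson radical class together with its extension property) rather than from the intersection-of-maximal-ideals description available in the commutative unital case.
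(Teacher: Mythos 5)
Your proof is correct, and its skeleton --- push the ideals forward along $f$, invoke the right $f(S)$-$\mathcal{J}$ property of $f(P)$ for a fixed witness, then pull both alternatives back --- is exactly the paper's; the pullback of the alternative $f(K\langle s\rangle)\subseteq f(P)$ via $f^{-1}(f(P))=P$ is also identical. The one genuine divergence is the step you yourself flag as the main obstacle: showing $f(I\langle s\rangle)\subseteq\mathcal{J}(R_2)$ forces $I\langle s\rangle\subseteq\mathcal{J}(R_1)$. You obtain this by citing the factor property $\mathcal{J}(R_1/\text{Ker}(f))=\mathcal{J}(R_1)/\text{Ker}(f)$ for $\text{Ker}(f)\subseteq\mathcal{J}(R_1)$, so that $f^{-1}(\mathcal{J}(R_2))=\mathcal{J}(R_1)$; and your identification of the radical-theoretic ingredients is accurate (hereditariness makes $\text{Ker}(f)$ itself a $\mathcal{J}$-radical ring, extension closure then pulls $\mathcal{J}(R_2)$ back, and homomorphic closure gives the easy inclusion). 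The paper instead proves the same inclusion by hand from the semisimple-class description: it takes each ideal $J\lhd R_1$ with $R_1/J\in\mathcal{S}_{\mathcal{J}}\cap\mathcal{K}$, notes $\text{Ker}(f)\subseteq\mathcal{J}(R_1)\subseteq J$, deduces $R_2/f(J)\cong R_1/J$ so that $\mathcal{J}(R_2)\subseteq f(J)$, pulls back to get $I\langle s\rangle\subseteq f^{-1}(f(J))=J$, and intersects over all such $J$. The two arguments establish the same underlying fact; yours packages it as a standard quoted lemma (shorter, and arguably cleaner), while the paper's version is self-contained, unfolding within the proof the very radical-theoretic computation your citation encapsulates. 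Your explicit care with the fixed witness $s$ is a point where your write-up is slightly more precise than the paper's ``for some $s\in S$.''
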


\begin{proof}
Suppose that $A_1B_1\subseteq P$ for ideals $A_1,B_1$ of $R$.
Then, $f(A_1)f(B_1)=f(A_1B_1)\subseteq f(P)$.
Since $f(P)$ is a right  $f(S)$-$\mathcal{J}$ of $R_2$, then, for some  $s\in S$,
\[
 \text{ either } f(B_1)\langle f(s)\rangle\subseteq f(P),  \text{ or } f(A_1)\langle f(s)\rangle\subseteq\mathcal{J}(R_2).
\]
  If  $f(B_1)\langle f(s)\rangle\subseteq f(P)$, then $B_1\langle s\rangle \subseteq f^{-1}(f(B_1\langle s\rangle)\subseteq f^{-1}(f(P))=P$, since $\text{\rm Ker}(f)\subseteq P$. 
  If $f(A_1)\langle f(s)\rangle\subseteq\mathcal{J}(R_2)$, in this case, we show that $A_1\langle s\rangle\subseteq\mathcal{J}(R_1)$ as the following.
  Let $J$ be an ideal of $R_1$ with $R_1/J \in \mathcal{S_\mathcal{J}}\cap\mathcal{K}$, then $\mathcal{J}(R_1)\subseteq J$, thus $\text{\rm Ker}(f)\subseteq J$. Hence, $f(R_1)/f(J)\cong (R_1/\text{\rm Ker}(f))/(J/\text{\rm Ker}(f)) \cong R_1/J$, so $R_2/f(J)=f(R_1)/f(J) \in \mathcal{S_\mathcal{J}}\cap\mathcal{K}$, and thus $f(A_1)\langle f(s)\rangle\subseteq\mathcal{J}(R_2)\subseteq f(J)$, thus  $A_1 \langle s\rangle\subseteq f^{-1}(f(A_1\langle s\rangle))\subseteq f^{-1}(f(J))=J$, for each $J$ in  $\cap\{I\lhd R: R/I\in \mathcal{S_\mathcal{J}}\cap\mathcal{K}\}$, and hence $A_1 \langle s\rangle\subseteq\mathcal{J}(R_1)$.    Thus, $P$ is the right  $S$-$\mathcal{J}$ of $R_1$.
\end{proof}

\begin{proposition}\label{RRPP} Let $R$ be a ring with identity, $P\lhd R$ and $(\mathcal{J}(R):\langle s\rangle)=\mathcal{J}(R)$ for some $s\in S$. The following are equivalent:
\end{proposition}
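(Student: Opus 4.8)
The plan is to transport the commutative argument of Proposition \ref{RR} into the noncommutative ideal-theoretic language, using Proposition \ref{H}(3) to move freely between the element form $xRy\subseteq P$ and the ideal form $IK\subseteq P$; I expect the listed conditions to parallel those of Proposition \ref{RR}, i.e. to trade membership in $\mathcal{J}(R)$ for membership in the ideal-Jacobson radical $\mathcal{J}^*(P)$. Everything is driven by the hypothesis $(\mathcal{J}(R):\langle s\rangle)=\mathcal{J}(R)$, so the first thing I would record is that every right $S$-$\mathcal{J}$-ideal $P$ satisfies $P\subseteq(\mathcal{J}(R):\langle s\rangle)$, hence $P\subseteq\mathcal{J}(R)$. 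To obtain $P\subseteq(\mathcal{J}(R):\langle s\rangle)$ I would take any $a\in P$, note that $aR\subseteq P$ so that $aR1\subseteq P$, and apply Proposition \ref{H}(3): either $a\langle s\rangle\subseteq\mathcal{J}(R)$ or $1\cdot\langle s\rangle=\langle s\rangle\subseteq P$. The second alternative forces $s\in P$, contradicting $P\cap S=\emptyset$, so $a\langle s\rangle\subseteq\mathcal{J}(R)$, i.e. $a\in(\mathcal{J}(R):\langle s\rangle)$.

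Once $P\subseteq\mathcal{J}(R)$ is in hand, the key observation is that every maximal ideal containing $P$ is simply a maximal ideal (all of them already contain $\mathcal{J}(R)\supseteq P$), so the ideal-Jacobson radical collapses: $\mathcal{J}^*(P)=\mathcal{J}(R)$. With this identity both implications become routine. For the direction that $P$ right $S$-$\mathcal{J}$ implies the $\mathcal{J}^*(P)$-condition, I would start from $xRy\subseteq P$, apply Proposition \ref{H}(3) to get $x\langle s\rangle\subseteq\mathcal{J}(R)$ or $y\langle s\rangle\subseteq P$, and then use $\mathcal{J}(R)\subseteq\mathcal{J}^*(P)$ to weaken the first alternative to $x\langle s\rangle\subseteq\mathcal{J}^*(P)$. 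For the converse I would run the computation backwards: from $x\langle s\rangle\subseteq\mathcal{J}^*(P)$ the identity $\mathcal{J}^*(P)=\mathcal{J}(R)$ gives $x\langle s\rangle\subseteq\mathcal{J}(R)$, and Proposition \ref{H}(3) then certifies $P$ as right $S$-$\mathcal{J}$.

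The hard part will be justifying $\mathcal{J}^*(P)=\mathcal{J}(R)$ in the noncommutative world. In the commutative argument of Proposition \ref{RR} this was clean, since $\mathcal{J}^*(I)$ was literally the intersection of the maximal ideals containing $I$ and one could write $\mathcal{J}^*(I)\subseteq\mathcal{J}(\mathcal{J}(R))=\mathcal{J}(R)$. Noncommutatively the Jacobson radical is the intersection of maximal \emph{left} (equivalently right) ideals rather than maximal two-sided ideals, so I would need to fix the definition of $\mathcal{J}^*(P)$ compatibly with the Hoehnke-radical and $\mathcal{S}_{\mathcal{J}}\cap\mathcal{K}$ framework already used in this section, and re-justify the idempotence $\mathcal{J}(\mathcal{J}(R))=\mathcal{J}(R)$ in that language, before the collapse $\mathcal{J}^*(P)=\mathcal{J}(R)$ is legitimate. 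Everything else is a mechanical translation between the $ab\in I$ statements of the commutative proof and their $xRy\subseteq P$ counterparts via Proposition \ref{H}.
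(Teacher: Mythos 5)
Your proposal follows essentially the same route as the paper's own proof: you get (1)$\Rightarrow$(2)(i) from the disjointness $P\cap S=\emptyset$ (the paper runs the same argument with an arbitrary $y$ where you take $y=1$), (1)$\Rightarrow$(2)(ii) from Proposition \ref{H}(3) together with $\mathcal{J}(R)\subseteq\mathcal{J}^*(P)$, and (2)$\Rightarrow$(1) via the collapse $\mathcal{J}^*(P)=\mathcal{J}(R)$ once $P\subseteq(\mathcal{J}(R):\langle s\rangle)=\mathcal{J}(R)$. The definitional worry you flag about $\mathcal{J}^*(P)$ in the noncommutative setting is legitimate --- if $\mathcal{J}^*(P)$ is read as the intersection of maximal two-sided ideals containing $P$, then for $P\subseteq\mathcal{J}(R)$ it equals the Brown--McCoy-type radical, which may strictly contain $\mathcal{J}(R)$, and the collapse needs the primitive-ideal (Hoehnke) reading to be valid --- but the paper simply asserts $\mathcal{J}^*(P)=\mathcal{J}(R)$ with no justification, so your sketch is no less complete than the published proof on exactly this point.
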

$(1)$ $P$ is a right $S$-$\mathcal{J}$-ideal of $R$ associated with $s\in S$.

\medskip
$(2)$  (i) $P\subseteq (\mathcal{J}(R):\langle s\rangle)$. 

\medskip
(ii) For all $x, y \in R$ with $xRy \subseteq P$, either $x\in(\mathcal{J}^*(P):\langle s\rangle)$ or $ y\in (P:\langle s\rangle)$.

\begin{proof} $(1)\Rightarrow(2)$ (i) Suppose  $P\not\subseteq (\mathcal{J}(R):\langle s\rangle)$, then, for all $x\in P{\setminus}(\mathcal{J}(R):\langle s\rangle)$ and any $y \in R$ we have $\langle x\rangle\langle y\rangle\subseteq P$, hence, by $(1)$, we obtain $ y\in (P:\langle s\rangle)$  for some $s\in S$, and thus, $R=(P:\langle s\rangle)$, which implies  $1\langle s\rangle\subseteq P$, a contradiction since $P\cap S=\emptyset$.  Thus, $P\subseteq  (\mathcal{J}(R):\langle s\rangle)$. 

\medskip
(ii) Now assume  $xRy \subseteq P$ for some $x$, $y$ of $R$. By $(3)$ of Proposition \ref{H}, either $x\in  (\mathcal{J}(R):\langle s\rangle)\subseteq(\mathcal{J}^*(P):\langle s\rangle)$ or $ y\in (P:\langle s\rangle)$ for some $s\in S$.

\medskip
$(2)\Rightarrow(1)$ Assume  $A_1A_2\subseteq P$  for some ideals $A_1$, $A_2$ of $R$. Assume that $A_1\langle s\rangle\not\subseteq \mathcal{J}(R)$ and $A_2\langle s\rangle\not\subseteq P$ for all $s\in S$. Then, there is $a_1\in A_1{\setminus}(\mathcal{J}(R):\langle s\rangle)$ and  $a_2 \in A_2{\setminus}(P:\langle s\rangle)$.  Hence, $a_1Ra_2\subseteq A_1A_2\subseteq P$, and by $(ii)$ we obtain either  $a_1\in(\mathcal{J}^*(P):\langle s\rangle)$ or $ a_2\in (P:\langle s\rangle)$.  Now since $P\subseteq (\mathcal{J}(R):\langle s\rangle)=\mathcal{J}(R)$, then, $\mathcal{J}^*(P)=\mathcal{J}(R)$. Thus, either $a_{1}\in  (\mathcal{J}(R):\langle s\rangle)$ or $ a_2\in (P:\langle s\rangle)$, a contradiction in both cases. Hence, $P$ is right $S$-$\mathcal{J}$ of $R$. 
\end{proof}

\begin{proposition}
Let $R$ has an identity and $P\lhd R$ such that $S\cap P=\emptyset$. 
\end{proposition}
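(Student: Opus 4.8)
The plan is to carry the commutative arguments of Lemma~\ref{13} and Proposition~\ref{14} over to the noncommutative setting, where primeness of $P$ is the requirement that $aRb\subseteq P$ forces $a\in P$ or $b\in P$, and where residuals must be handled one-sidedly. The first tool I would set up is a noncommutative residuation lemma: for an ideal $A$ of $R$ with $A\not\subseteq P$, the right residual $(P:A)=\{r\in R:Ar\subseteq P\}$ is again a right $S$-$\mathcal{J}$-ideal and contains $P$. The verification copies Lemma~\ref{13}: from $A_1A_2\subseteq(P:A)$ one gets $A_1(A_2A)\subseteq P$, so the right $S$-$\mathcal{J}$ property of $P$ gives either $A_1\langle s\rangle\subseteq\mathcal{J}(R)$ or $A_2A\langle s\rangle\subseteq P$, the latter rearranging to $A_2\langle s\rangle\subseteq(P:A)$.

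For the implication that a maximal right $S$-$\mathcal{J}$-ideal $P$ is prime, I would take $aRb\subseteq P$ with $a\notin P$ and deduce $b\in P$. Because $R$ has an identity, $\langle a\rangle=RaR$, and $a\notin P$ gives $\langle a\rangle\not\subseteq P$; the residuation lemma then makes $(P:\langle a\rangle)$ a right $S$-$\mathcal{J}$-ideal containing $P$, so maximality forces $(P:\langle a\rangle)=P$. Since $\langle a\rangle b=RaRb\subseteq P$ (because $aRb\subseteq P$ and $P$ is a left ideal), we conclude $b\in(P:\langle a\rangle)=P$, exactly as in the scalar computation of Proposition~\ref{14}$(1)$.

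For the converse—that a prime $P$ with $P=(\mathcal{J}(R):\langle s\rangle)$ is maximal among right $S$-$\mathcal{J}$-ideals—I would first confirm that such a $P$ is itself right $S$-$\mathcal{J}$: if $aRb\subseteq P$, then primeness yields $a\in P=(\mathcal{J}(R):\langle s\rangle)$, whence $a\langle s\rangle\subseteq\mathcal{J}(R)$, or $b\in P$, whence $b\langle s\rangle\subseteq P$ since $P$ is an ideal, and Proposition~\ref{H}$(3)$ then applies. Maximality follows by the noncommutative analogue of Proposition~\ref{1}, namely the containment established in Proposition~\ref{RRPP}$(2)(i)$, which places any right $S$-$\mathcal{J}$-ideal inside $(\mathcal{J}(R):\langle s\rangle)=P$.

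The step I expect to be the main obstacle is twofold. First, in the residuation lemma one must be scrupulous about sides: $(P:A)$ has to be taken as the right residual $\{r:Ar\subseteq P\}$, and one must check it is a genuine two-sided ideal before invoking the $S$-$\mathcal{J}$ test. Second, the maximality comparison implicitly ranges over right $S$-$\mathcal{J}$-ideals $Q$ that may be associated with different witnesses $s'\in S$, so a priori $Q\subseteq(\mathcal{J}(R):\langle s'\rangle)$ need not land inside $P=(\mathcal{J}(R):\langle s\rangle)$; pinning the argument to the fixed $s$ supplied by the hypothesis $P=(\mathcal{J}(R):\langle s\rangle)$, just as the choice $I=(\mathcal{J}(R):s)$ does in Proposition~\ref{14}$(2)$, is what makes the comparison go through.
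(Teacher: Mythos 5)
There is a genuine mismatch here: you have proved the wrong statement. The proposition in question (its two claims are listed immediately after the environment, a formatting quirk of this paper) asserts: $(1)$ every right $S$-$\mathcal{J}$-ideal $P$ satisfies $P\subseteq(\mathcal{J}(R):\langle s\rangle)$ for some $s\in S$, i.e.\ it is the noncommutative analogue of Proposition \ref{1}; and $(2)$ $\mathcal{J}(R)$ is a right $S$-$\mathcal{J}$-ideal if and only if it is right $S$-prime. The paper's proof of $(1)$ is a short element-wise argument: for $a\in P$ one has $\langle a\rangle\langle s_1\rangle\subseteq P$, so the right $S$-$\mathcal{J}$ property gives $\langle a\rangle\langle s\rangle\subseteq\mathcal{J}(R)$ or $\langle s_1\rangle\langle s\rangle\subseteq P$; the second case is impossible because the $m$-system property produces $s_2=s_1rs\in S\cap P$. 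Your proposal instead develops a noncommutative version of Lemma \ref{13} and Proposition \ref{14} (residuation, maximal right $S$-$\mathcal{J}$-ideals are prime, and a converse), none of which is what this proposition claims. Your only contact with claim $(1)$ is a citation of Proposition \ref{RRPP}$(2)(i)$, but that proposition carries the standing hypothesis $(\mathcal{J}(R):\langle s\rangle)=\mathcal{J}(R)$, which the present proposition does not assume, so the citation does not establish $(1)$ in the required generality; claim $(2)$ is not addressed at all.

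Separately, even taken on its own terms, your residuation lemma has a sides problem that the commutative proof of Lemma \ref{13} hides. You define the residual as $(P:A)=\{r\in R: Ar\subseteq P\}$ but then compute as if $A_1A_2\subseteq(P:A)$ gave $A_1(A_2A)\subseteq P$, which is the \emph{other} residual $\{r: rA\subseteq P\}$. Worse, with either choice the key step fails: from $A_1(A_2A)\subseteq P$ the right $S$-$\mathcal{J}$ property yields $A_2A\langle s\rangle\subseteq P$, and in a noncommutative ring this does not give $A_2\langle s\rangle A\subseteq P$, i.e.\ it does not put $A_2\langle s\rangle$ back inside the residual (the commutative argument silently swaps $A\langle s\rangle$ and $\langle s\rangle A$). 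So the maximality machinery you build would need an extra hypothesis such as $S\subseteq C(R)$, and in any case it should be replaced by the paper's direct $m$-system argument for claim $(1)$, together with a short routine verification of claim $(2)$.
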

$(1)$ If $P$ is a right  $S$-$\mathcal{J}$-ideal, then, $P\subseteq(\mathcal{J}(R):\langle s\rangle) $ for some $s\in S$.

\medskip
$(2)$ $\mathcal{J}(R)$ is a right $S$-$\mathcal{J}$ if and only if $\mathcal{J}(R)$ is a right $S$-prime.

\begin{proof}
$(1)$ Assume $P$ is a right $S$-$\mathcal{J}$-ideal. For all  $a\in P$, $\langle a\rangle\langle s_1\rangle \subseteq P$, for some $s_1\in S$. Hence, either $\langle a\rangle\langle s\rangle \subseteq \mathcal{J} (R)$ or $\langle s_1\rangle\langle s\rangle \subseteq P$ for some $s\in S$. If $\langle s_1\rangle\langle s\rangle \subseteq P$, then, for some $r\in R$, $s_2=s_1rs\in S$, hence, $\langle s_2\rangle\subseteq\langle s_1\rangle\langle s\rangle \subseteq P$, contradiction because  $S\cap P=\emptyset$. Hence, $\langle a\rangle\langle s\rangle \subseteq \mathcal{J} (R)$ for all $a\in P$, and thus, $P\subseteq(\mathcal{J}(R):\langle s\rangle) $.

\medskip
$(2)$ The proof is routine. 
\end{proof}

Recall that an ideal $P$ of a ring $R$ is called superfluous if 
$A + B = R$ for some ideal $B$ of $R$, then, $B = R$.

\begin{corollary}
For a  local ring $R$ with a unique maximal ideal $\mathcal{M}$, if $(\mathcal{J}(R):\langle s\rangle)$ is a $\mathcal{J}$-ideal  of $R$, then $I$ is a superfluous ideal for any  right $S$-$\mathcal{J}$-ideal $I$ of $R$.
\end{corollary}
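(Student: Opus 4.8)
The plan is to reduce the superfluity of $I$ to the single containment $I\subseteq\mathcal{J}(R)$, and then to read off the conclusion from the local structure. Recall that in a local ring the unique maximal ideal satisfies $\mathcal{M}=\mathcal{J}(R)$, and that every proper ideal is contained in $\mathcal{M}$. Hence once $I\subseteq\mathcal{M}$ is established, superfluity is immediate: if $I+B=R$ for an ideal $B$, then $B$ cannot be proper, for otherwise $B\subseteq\mathcal{M}$ would give $R=I+B\subseteq\mathcal{M}+\mathcal{M}=\mathcal{M}$, a contradiction; therefore $B=R$, which is exactly the defining property of a superfluous ideal.

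To obtain $I\subseteq\mathcal{M}$ I would proceed in two short steps. First, since $I$ is a right $S$-$\mathcal{J}$-ideal, part $(1)$ of the preceding proposition supplies an $s\in S$ with $I\subseteq(\mathcal{J}(R):\langle s\rangle)$. Second, the hypothesis that $(\mathcal{J}(R):\langle s\rangle)$ is a $\mathcal{J}$-ideal, together with Proposition 1.5 of \cite{ref1} (a $\mathcal{J}$-ideal is contained in $\mathcal{J}(R)$), forces $(\mathcal{J}(R):\langle s\rangle)\subseteq\mathcal{J}(R)$; in fact the reverse inclusion $\mathcal{J}(R)\subseteq(\mathcal{J}(R):\langle s\rangle)$ holds automatically because $\mathcal{J}(R)$ is a two-sided ideal, so $(\mathcal{J}(R):\langle s\rangle)=\mathcal{J}(R)$. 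Chaining the two containments yields $I\subseteq\mathcal{J}(R)=\mathcal{M}$, and the superfluity argument of the first paragraph then completes the proof.

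No genuinely hard step arises; the only points requiring care are bookkeeping and conventions. One must read the hypothesis ``$(\mathcal{J}(R):\langle s\rangle)$ is a $\mathcal{J}$-ideal'' as applying to the same $s$ produced by part $(1)$ for the given $I$, which is the natural reading since that colon ideal collapses to $\mathcal{J}(R)$ regardless of which $s$ makes it a $\mathcal{J}$-ideal. One should also keep in mind that the relevant content of the superfluous condition is the implication $I+B=R\Rightarrow B=R$, and that the bridge to the radical is the classical principle that ideals lying inside $\mathcal{J}(R)$ are superfluous, here rendered transparent by the local hypothesis $\mathcal{M}=\mathcal{J}(R)$.
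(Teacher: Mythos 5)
Your proof is correct and takes essentially the same route as the paper's: both establish the chain $I\subseteq(\mathcal{J}(R):\langle s\rangle)=\mathcal{J}(R)=\mathcal{M}$ by combining part $(1)$ of the preceding proposition with Proposition 1.5 of \cite{ref1} (plus the automatic reverse inclusion), and then read off superfluity. The only cosmetic difference is the last step, where the paper writes $1=i+j$ and observes that $j=1-i$ is a unit since $i\in\mathcal{J}(R)$, whereas you argue that a proper $B$ would force $R=I+B\subseteq\mathcal{M}$; these are two phrasings of the same classical fact.
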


\begin{proof}   Suppose $I+J=R$ for some ideal $J$ of $R$, then, there exist $i\in I$ and $j\in J$ such that $i+j=1$, hence, $1-j=i\in I$. Now since $I$ is   right $S$-$\mathcal{J}$ of $R$, then, by Proposition \ref{RRPP}, $I \subseteq(\mathcal{J}(R):\langle s\rangle)$, and since $(\mathcal{J}(R):\langle s\rangle)$ is a $\mathcal{J}$-ideal, then, by Proposition 1.5 of \cite{ref1},  $(\mathcal{J}(R):\langle s\rangle)\subseteq\mathcal{J}(R)\subseteq(\mathcal{J}(R):\langle s\rangle)$. Consequently,  
\[
1-j=i\in I\subseteq(\mathcal{J}(R):\langle s\rangle)=\mathcal{J}(R)=\mathcal{M}.
\]

Thus, $j$ is a unit, and hence, $J=R$, and $I$ is superfluous.
\end{proof}

It is interesting to explore the structure of rings where every ideal is an $S$-$\mathcal{J}$-ideal. As noted in \cite{Gulf}, Proposition 4.5 demonstrates that in the noncommutative Artinian local rings, every ideal is a $\mathcal{J}$-ideal and therefore also an $S$-$\mathcal{J}$-ideal. However, the question remains open regarding the structure of rings where every ideal is an $S$-$\mathcal{J}$-ideal but not necessarily a $\mathcal{J}$-ideal that is, where there exists at least one ideal that is not a $\mathcal{J}$-ideal.

\subsection{Conclusion}
In this paper, we have introduced and explored the concept of $S$-$\mathcal{J}$-ideals in both commutative and noncommutative rings.  We demonstrated that many properties of $\mathcal{J}$-ideals carry over to $S$-$\mathcal{J}$-ideals, thus offering a strong generalization. Our investigation into various ring constructions, including homomorphic image rings, quotient rings, cartesian product rings, polynomial rings, power series rings, idealization rings, and amalgamation rings, reveals the versatility and broad applicability of $S$-$\mathcal{J}$-ideals.

In noncommutative rings, we have defined right $S$-$\mathcal{J}$-ideals where $S$ is an $m$-system and demonstrated their equivalence with $S$-$\mathcal{J}$-ideals in the commutative case with identity. The provided examples illustrate the connections and distinctions between right $S$-prime ideals and $\mathcal{J}$-ideals, enriching the understanding of the new concept. 

This new class of ideals paves the way for further studies, such as  weakly $S$-$\mathcal{J}$, and quasi $S$-$\mathcal{J}$-ideals, in both commutative and noncommutative rings. These are closely related examples, and one can also, for example, explore $S$-$\mathcal{J}$-primary ideals in future research. 

We conclude the paper with several open questions that arise naturally from our investigation:

(1) Suppose the localization $IS^{-1}$ is a $\mathcal{J}$-ideal of $RS^{-1}$. Does it follow that $I$ is an $S$-$\mathcal{J}$-ideal of $R$?

(2) Is the set $S$ used in the localization the same as the one used in our generalization to define $S$-$\mathcal{J}$-ideals?

(3) How can we characterize rings where every ideal (disjoint from $S$) is an $S$-$\mathcal{J}$-ideal but not necessarily a $\mathcal{J}$-ideal?

These questions remain unanswered but open up avenues for further research.

\end{document}